\newcommand {\ov}[1]{ \overline{#1}}
\newcommand {\wt}[1]{ \widetilde{#1}}
\newcommand {\wh}[1]{ \widehat{#1}}
\newcommand{\E}{\mathbf{E}}
\renewcommand{\P}{\mathbf{P}}
\newcommand{\BG}{\mathrm{BG}}
\renewcommand {\epsilon}{\varepsilon}
\newtheorem{thm}{Theorem}[section]
\newtheorem{cor}{Corollary}[section]
\newtheorem{lem}{Lemma}[section]
\newtheorem{rem}{Remark}[section]
\newtheorem{exam}{Example}[section]
\DeclareMathSymbol{\ophi}{\mathalpha}{letters}{"1E}
\newcommand{\e}{\varepsilon}
\newcommand{\eps}{\varepsilon}
\renewcommand{\phi}{\varphi}
\newcommand{\be}{\begin{equation}}
\newcommand{\ee}{\end{equation}}
\newcommand{\ben}{\begin{equation*}}
\newcommand{\een}{\end{equation*}}
\newcommand{\ba}{\begin{equation}\begin{aligned}}
\newcommand{\ea}{\end{aligned}\end{equation}}
\newcommand{\ban}{\begin{equation*}\begin{aligned}}
\newcommand{\ean}{\end{aligned}\end{equation*}}
\DeclareMathOperator{\sgn}{sgn}
\renewcommand{\i}{\mathrm{i}}
\newcommand{\ex}{\mathrm{e}}
\newcommand{\di}{\mathrm{d}}
\newenvironment{proof}{\par\noindent{\bf Proof:}}{\hfill$\blacksquare$\par}
\newenvironment{Aproof}[2]{\par\noindent{\bf Proof of #1 #2:}}{\hfill$\blacksquare$\par}
\let\oldmarginpar\marginpar
\renewcommand{\marginpar}[1]{\oldmarginpar{\scriptsize\texttt{\color{red}{#1}}}}
\newcommand{\rA}{\mathscr{A}}
\newcommand{\rB}{\mathscr{B}}
\newcommand{\rF}{\mathscr{F}}
\newcommand{\rM}{\mathscr{M}}
\newcommand{\bI}{\mathbb{I}}
\newcommand{\bR}{\mathbb{R}}
\begin{document}
\title{Non-Gaussian
Limit Theorem for Non-Linear Langevin Equations Driven
by L\'evy Noise}

\author{Alexei Kulik\footnote{Institute of  Mathematics, Ukrainian National Academy of Sciences,
Tereshchenkivska Str.\ 3, 01601 Kiev, Ukraine}
\footnote{Institut f\"ur Mathematik, Strasse des 17.\ Juni 136, D-10623 Berlin, Germany; kulik.alex.m@gmail.com}\ \ and
Ilya Pavlyukevich\footnote{Institut f\"ur Mathematik, Friedrich--Schiller--Universit\"at Jena, Ernst--Abbe--Platz 2,
07743 Jena, Germany; ilya.pavlyukevich@uni-jena.de}}

\maketitle

\begin{abstract}

In this paper, we study the small noise behaviour of solutions of a non-linear second order Langevin equation
$\ddot x^\e_t +|\dot x^\e_t|^\beta=\dot Z^\e_{\e t}$, $\beta\in\mathbb R$,
driven by symmetric non-Gaussian L\'evy processes $Z^\e$.
This equation describes the dynamics of a one-degree-of-freedom mechanical system subject to non-linear
friction and noisy vibrations. For a compound Poisson noise, the process
$x^\eps$ on the macroscopic time scale $t/\eps$
has a natural interpretation as a non-linear filter which
responds to each single jump of the driving process.
We prove that a system driven by a general symmetric L\'evy noise exhibits essentially the same asymptotic behaviour
under the principal condition $\alpha+2\beta<4$, where $\alpha\in [0,2]$ is the ``uniform'' Blumenthal--Getoor index  of the family
$\{Z^\e\}_{\e>0}$.
\end{abstract}

\textbf{Keywords:}
L\'evy process; Langevin equation; non-linear friction; H\"older-continuous drift; singular drift; stable L\'evy process;
Blumenthal--Getoor index; ergodic Markov process; Lyapunov function.

\textbf{AMS MSC 2010:} Primary 60F05;
Secondary 60G51, 60H10, 60J25, 70F40, 70L05.

\numberwithin{equation}{section}

%

\section{Introduction and motivation}

In this paper we study a non-linear response of a one-dimensional system
to both external stochastic excitation and non-linear friction.
In the simplest mathematical setting in the absence of external forcing,
one can assume that the friction force is proportional to a power ($\beta\in\bR$) of the
particle's velocity; that is, the equation of motion has the form
\ba
\label{ODE}
\ddot x_t= -|\dot x_t|^\beta\sgn \dot x_t.
\ea
This model covers such prominent particular cases as the linear viscous (Stokes) friction $\beta=1$, the dry (Coulomb) friction $\beta=0$,
and the high-speed limit of the Rayleigh friction $\beta=2$ (see \cite{Persson-00,Popov-10,SerBuk-15}).
 As usual, the second-order equation \eqref{ODE} can be written
as a first order system
\ba
\label{e:0}
\dot x_t&= v_t,\\
\dot v_t&= -|v_t|^\beta\sgn v_t,
\ea
which is a particular case of a (non-linear) Langevin equation. The second equation in this system is autonomous,
and the corresponding \emph{velocity} component can be given explicitly, once its initial value $v_0$ is fixed:
\be\label{sol_v}
v_t=\left\{
      \begin{array}{ll}
         v_0 \ex^{-t}, & \beta=1; \\
        \Big(|v_0|^{1-\beta}-(1-\beta)t\Big)_+^{1/(1-\beta)}\sgn v_0, & \hbox{otherwise.}
      \end{array}
    \right.
\ee
Clearly, for any $\beta\in \bR$ and $v_0\in \bR$ such a solution tends to $0$ as $t\to \infty$; that is, in any case,
the velocity component of the system \emph{dissipates}.
The complete picture which also involves the \emph{position} component, is more sophisticated. Clearly,
\ban
x_t=x_0+\int_0^tv_s\, \di s,
\ean
and one can easily observe that $v=(v_t)_{t\geq 0}$ is integrable on $\bR_+$ if $\beta<2$.
In this case the position component $x=(x_t)_{t\geq 0}$ dissipates as well and tends to a limiting value
\ban
x_t\to x_\infty=x_0+ F(v_0), \quad t\to \infty, \quad F(v)=\frac{1}{2-\beta}|v|^{2-\beta}\sgn v.
\ean
The function $F(v)$ has the meaning of a complete \emph{response} of the system to the instant perturbation of
its velocity by $v$. For $\beta\geq 2$, the integral of $v_t$  over  $\bR_+$ diverges,
and $x_t$ tends to $\pm\infty$ depending on the sign of $v_0$. In other words, the friction in the system in the vicinity of zero
is too weak to slow down the particle.

In this paper we consider the interplay  between  the non-linear dissipation and the weak random vibrations of the particle,
namely we study perturbations of the velocity by a weak
(symmetric) L\'evy process $Z$,
\ba
\label{e:1}
\dot x_t^\e&= v_t^\e,\\
\dot v_t^\e&= -|v_t^\e|^\beta\sgn v_t^\e+\dot Z_{\e t}
\ea
in the small noise limit $\e\to 0$.
Often in the literature,
a weak perturbation is chosen in the form $\e Z_t$ under the assumption that $Z=B$ is a Brownian motion or an $\alpha$-stable L\'evy process,
$\alpha\in (0,2)$. In this case, the self-similarity of these processes yields that $(\e Z_t)_{t\geq 0}\stackrel{\text{law}}{=} (Z_{\e^\alpha t})_{t\geq 0}$,
$\alpha\in(0,2]$. A mere renaming of $\e^\alpha$ into $\e$ gives us the parametrization \eqref{e:1}.

Heuristically, we consider a system, which consists of two different components acting
on different \emph{time scales}. The \emph{microscopic} behaviour of the system is primarily
determined by the non-linear model \eqref{e:0} under random perturbations of low intensity.
It is clear that neither these perturbations themselves nor their impact on the system are visible
on the microscopic time scale; that is on any finite time interval $[0,T]$, $Z_{\e t}$ tends to $0$, and $(x^\e_t, v_t^\e)$ become close to
$(x_t, v_t)$ as $\e\to 0$.

The influence of random perturbations becomes significant on the \emph{macroscopic} time scale
$\e^{-1} t$
which suggests to focus our analysis on the limit behaviour of the pair
\be
\label{macro}
(X_t^\e, V_t^\e):=\Big(x_{\e^{-1}t}^\e, v_{\e^{-1}t}^\e\Big)
\ee
satisfying the system of SDEs
\ba
\label{e:2}
\di X_t^\e&= \frac{1}{\e}V^\e_t\, \di t,\\
\di V_t^\e&= -\frac{1}{\e}|V_t^\e|^\beta\sgn V_t^\e\, \di t+ \di Z_{t}.
\ea
We will actually study a slightly more general system
\ba
\label{e:2bis}
\di X_t^\e&= \frac{1}{\e}V^\e_t\, \di t,\\
\di V_t^\e&= -\frac{1}{\e}|V_t^\e|^\beta\sgn V_t^\e\, \di t+ \di Z_{t}^\eps
\ea
with a \emph{family} of L\'evy processes $\{Z^\eps\}$, and look for a non-trivial limit for the position process $X^\e$ as $\e\to 0$, in dependence on
the friction exponent $\beta$ and the properties of the family  $\{Z^\eps\}$. It will be assumed that 
$Z^\e\stackrel{\text{f.d.d.}}{\to} Z$ as $\e\to 0$; that is, the system \eqref{e:2bis} includes a possibility of 
slight fluctuations in the characteristics of the noise. 
This may look as just a technical complication of \eqref{e:2}; however, this seeming complication is a blessing in disguise, 
since it allows one to use a ``truncation of small jumps'' procedure in order to resolve a difficult 
question about existence and uniqueness of the corresponding SDE in the case $\beta<0$; 
see Section \ref{s21} below. This will  make the entire construction mathematically rigorous without any loss in the physical
relevance; note that the friction models with negative values of $\beta$ are qiute common, see \cite{Blau-09}, Chapter 7.3.

The case of Stokes friction $\beta=1$ is probably the simplest one: the system \eqref{e:2} is linear, and
under zero initial conditions $X^\e_0=V^\e_0=0$, its solution $X^\e$ is found
explicitly as a convolution integral
\ban
X^\e_t=\int_0^t (1-\ex^{-(t-s)/\e})\,\di Z_s^\eps.
\ean
\cite{HinPav14} showed, that for a fixed L\'evy
forcing $Z^\eps$, $X^\e$ converges to $Z$ in the sense of finite-dimensional distributions.
It is worth noticing that although $X^\e$ is an absolutely continuous process, the limit is in general a jump process. In that case, a functional
limit theorem requires the convergence in non-standard Skorokhod topologies such as  the $M_1$-Skorokhod topology.

Non-linear ($\beta\neq 1$) stochastic systems of the type \eqref{e:2} driven by Brownian motion, $Z=B$, have been studied in recent years
both in physical and mathematical
literature, see \cite{Lindner2007diffusion,Lindner2008diffusion,Lindner2010diffusion,LisTotGlo-2014} for the analysis for $\beta=1,2,3,5$,
\cite{Baule2012singular,Touchette2010brownian,deGennes2005brownian,Hayakawa2005langevin,Kawarada2004non,Mauger2006anomalous}
for the important case of dry (Coulomb) friction $\beta=0$, and \cite{Goohpattader2010diffusive} for
experiments and simulations for the dry friction $\beta=0$ and irregular friction $\beta=0.4$. The main goal of these papers was to
determine on the physical level of rigour how the so-called \emph{effective diffusion coefficient}, which is roughly speaking the variance of the particle's
position, depends on $\e$. In mathematical terms, the result from \cite{HinPav14} gave convergence $X^\e\Rightarrow B$ for $\beta=1$, whereas
\cite{EonGra-15} proved that for $\beta>-1$, the scaled process
$\e^{2(\beta-1)/(\beta+1)} X^\e$ weakly converges in the uniform topology to a Brownian motion whose variance is
calculated explicitly.

The limiting behaviour of \eqref{e:2} with a symmetric $\alpha$-stable L\'evy forcing was also the subject of the paper by \cite{EonGra-15}.
Under the condition $\alpha+2\beta>4$ they proved that the scaled process $\e^{\alpha(\alpha+2\beta-4)/2(\alpha+\beta-1)} X^\e$
weakly converges to a Brownian motion. The proof is based on the application of the central limit theorem for ergodic processes.

In the present paper, we establish a principally different type of the limit behaviour of the process $X^\e$.
We specify a condition on  the L\'evy noises $\{Z^\eps\}$, which ensures that $X^\e$, without any additional scaling,
converges to a non-Gaussian limit. Such a behaviour is easy to understand once $Z^\eps=Z$ is a compound Poisson process,
which is the simplest model for mechanical or physical shocks.
If $\beta<2$, the position process $X^\e$ is a composition of individual responses of the deterministic
system \eqref{ODE} on a series of rare impulse perturbations. Since a general (say, symmetric)
non-Gaussian L\'evy process $Z$ can be interpreted as limit of compound Poisson processes,
one can naively guess that the same effect should be observed for \eqref{e:2bis} in the general case as well.
This guess is not completely true,  because now  the ``large jumps'' part of the noise
(being, of course, a compound Poisson process) now interferes with the ``small jumps'' via a non-linear drift 
$|v|^\beta\sgn v$.
To guarantee that the ``small jumps'' are indeed negligible, we have to impose a balance condition between the non-linearity index $\beta$ and
the proper version of the Blumenthal--Getoor index $\alpha_{\BG}(\{Z^\eps\})$ (see \eqref{e:BG}) of
the family $\{Z^\eps\}$, namely we require that
\be
\label{underloaded}
\alpha_{\BG}(\{Z^\eps\})+2\beta<4.
\ee
Combined with the aforementioned analysis of the symmetric $\alpha$-stable case by \cite{EonGra-15},
this clearly separates  two alternatives  available for  the system \eqref{e:2bis}.
Once \eqref{underloaded} holds true, the small jumps are negligible, and $X^\e$ converges to a non-Gaussian limit;
otherwise, the small jumps dominate, and  $X^\e$ is subject to the central limit theorem, i.e.\ after a proper scaling
one gets a Gaussian limit for it. Note that since \eqref{underloaded} necessitate the bound $\beta<2$,
a non-Gaussian limit for $X^\e$ can be observed only when both the velocity and the position components of \eqref{e:0} are dissipative.

Systems of the type \eqref{e:2} driven by non-Gaussian L\'evy processes, especially $\alpha$-stable L\'evy processes
(L\'evy flights) attract constant attention in the physical literature. A linear case ($\beta=1$) is especially well studied. \cite{ChechkinGS02}
studied the equation \eqref{e:2} with $\e = 1$ in a two- and three-dimensional setting
in a model of plasma in an external constant magnetic field and subject to an $\alpha$-stable L\'evy
electric forcing. In the context of stochastic volatility models in financial mathematics such processes were  studied by
\cite{BarShe01,BarShe03}. Convergence of a linear system driven by an $\alpha$-stable L\'evy process was studied by \cite{altalibi10}
under a different scaling.  A stochastic harmonic oscillator was studies by \cite{SokolovED-11,DybiecGS-17}. In the non-linear case, we mention
works by \cite{ChechkinGKMT-02,ChechkinGKM-04,DubSpa07,DybSokChe10} where stationary distributions of the
velocity process $V^\e$ were studied and several closed form formulae for the stationary
density were obtained. There are just a few works devoted to the dynamics of \emph{non-linear} L\'evy driven systems of the type \eqref{e:2}, including
those by \cite{ChechkinGKM-05} and \cite{LuBao-2011}.

The rest of the paper is organized as follows. In Section \ref{smain}, we introduce the setting and
formulate the main results of the paper. To clarify the presentation, we separate two preparatory results:
Theorem \ref{t:cpp} for the system \eqref{e:2bis} with the compound Poisson noise, and  Theorem \ref{t:V},
which describes the asymptotic properties of the velocity component of a general system. The proofs of the
preparatory results are contained in Section \ref{s3}. The proof of the main statement of the paper,
Theorem \ref{t:main}, is given separately in the \emph{regular} case and in the \emph{non-regular/quasi-ergodic}
case in Section \ref{s4} and Section \ref{s5}, respectively; see discussion of the terminology therein. Some technical auxiliary
results are postponed to Appendix.

\subsection*{Acknowledgements}
The authors acknowledge financial support by the EU mobility program \emph{ERASMUS+ International Dimension}. A.K.\ thanks the Institute of Mathematics
of the Friedrich Schiller University Jena for hospitality.
The authors are sincerely grateful to the anonymous referee for his/her careful reading of our manuscript and useful suggestions which 
significantly improved the paper.

\section{Main results}\label{smain}

\subsection{Notation and preliminaries\label{s21}}

For $a\in\bR$, we denote $a_+=\max\{a,0\}$, $a\wedge b=\min\{a,b\}$
\ban
\sgn x=\begin{cases}
        -1,\quad x<0,\\
        0,\quad x=0,\\
        1,\quad x>0,
       \end{cases}
\ean
$X^\e\stackrel{\text{f.d.d.}}{\to}X$ denotes convergence in the sense of finite dimensional distributions.

Throughout the paper, $Z$ denotes a L\'evy process without a Gaussian component, which has
the L\'evy measure $\mu$. In what follows, $Z$ either is a compound Poisson process with $\mu(\bR)\in(0,\infty)$, or is a symmetric L\'evy process.
In both cases, the L\'evy--Hinchin formula for $Z$ reads
\ban
\E \ex^{\i \lambda Z_t}=\exp\Big(t\int(\ex^{\i \lambda z}-1)\,\mu(\di z)\Big),\quad \lambda\in\bR,\ t\geq 0.
\ean
We always assume that $\mu(\{0\})=0$. If $Z$ is a compound Poisson process,
we write
\ban
Z_t=\sum_{k=1}^\infty J_k \bI_{[\tau_k,\infty)}(t),
\ean
where $\{\tau_k\}_{k\geq 1}$ are jump arrival times of $Z$, and $\{J_k\}_{k\geq 1}$ are jump amplitudes.
For $Z$ with infinite L\'evy measure, an analogue of this representation is given by the \emph{It\^o--L\'evy decomposition}
\ban
Z_t=\int_0^t\int_{|z|\leq 1}z\wt N(\di z\,\di s)+\int_0^t\int_{|z|>1}z N(\di z\,\di s),
\ean
where
$N(\di z\, \di t)$ is the Poisson point measure associated with $Z$, $\wt N(\di z\, \di t)=N(\di z\,\di t)-\mu(\di z)\di t$ is
corresponding compensated measure.

In what follows, we consider the system \eqref{e:2bis} where the noise $\{Z^\eps\}$ will be assumed to satisfy at least one of the following assumptions:
\medskip

\begin{tabular}{ll}
$\mathbf{H}_{\text{CP}}$ &  Each $Z^\eps$ is a compound Poisson process.\\
$\mathbf{H}_{\text{sym}}$ & Each $Z^\eps$ is a symmetric L\'evy process without a Gaussian component.
\end{tabular}

\medskip

Such a diversity is caused by the question of the existence and uniqueness of solutions to \eqref{e:2bis}, 
which is solved quite differently for different values of $\beta\in\bR$. 

\color{black}

If $\beta\geq 1$, the friction term is smooth and satisfies the 
\emph{dissipativity} condition $vb(v)\leq -v^2$ for $|v|\geq 1$. To construct a 
unique solution, one truncates
the drift term at the levels $\pm n$ so that it becomes bounded and Lipschitz 
continuous, obtains a sequence of approximations $\{V^n\}_{n\geq 1}$
and shows that they converge to a solution which is well defined for all $t\geq 
0$. Details of this standard argument can be found, e.g.\ in 
\cite{SamorodnitskyG-03}.

For $\beta\in (0,1)$, the friction term is non-Lipschitz. However, the argument 
remains essentially the same as above, and is actually simpler because the 
truncation step is not needed. Namely, $b$ is monotonous and satisfies now
the \emph{one-sided Lipschitz condition}
$$
(u-v)(b(u)-b(v))\leq L|u-v|^2,\quad  u,v\in \bR,
$$ which guarantees existence of  the strong solution to \eqref{e:2}; see  
\cite{Situ05}, Theorem 170 and Example 171. 

For $\beta=0$, the solution to
\eqref{e:2bis} is well defined by \cite{TanTsuWat74}, Theorem 4.1 and subsequent Corollary, 
provided that either $\mathbf{H}_{\text{sym}}$ or $\mathbf{H}_{\text{CP}}$ holds.

The case $\beta<0$ is more subtle, and existence and uniqueness of solutions of the 
equation with such a \emph{singular} drift and arbitrary symmetric 
L\'evy noise is an open question. For the symmetric 
$\alpha$-stable noise with $\alpha\in (1,2)$, it is known that the \emph{weak} 
solution to \eqref{e:2} is uniquely defined when
$\alpha+\beta>1$; see \cite{Portenko-94}. This lower bound for $\beta$ seems to be crucial, because for the Brownian noise 
(that is, for  $\alpha=2$) it is known that in case $\beta<-1$ the solution after it 
reaches zero can not be further extended; see the general theory  presented in  \cite{CheEng05}.

Note however, that the situation simplifies drastically if $Z^\eps$ are  compound Poisson processes.  
In this case, the number of jumps for every  $Z^\eps$ is finite on each finite interval, and thus  the system \eqref{e:2bis} can 
be uniquely solved path-by-path for any $\beta\in\bR$; see the explicit formulae in Section \ref{s:31} below.

Let us summarize:  if $Z$ has an infinite jump measure $\mu$, then for $\beta<0$ with large $|\beta|$ the solution to \eqref{e:2}
is hardly specified. On the other hand,  a solution is well defined once  $Z$ is replaced by its compound Poisson approximation
$$
Z^\e_t=\int_0^t\int_{|z|>\ell(\eps)}z N(\di z\,\di s),
$$ where all the jumps of $Z$ with amplitudes smaller than some threshold $\ell(\e)$ are truncated. Since the 
cut-off level $\ell(\e)$ can be chosen arbitrary small, the intensity of compound Poisson approximations $Z^\e$ is finite
but can increase arbitrarily
fast as $\e\to 0$, so that from the point of view of physical applications the processes 
$Z$ and $Z^\e$ are practically indistinguishable. Such a ``truncation of small jumps'' procedure 
makes the entire construction mathematically rigorous without any loss in the physical relevance. In particular, 
it allows us to treat the system with the $\alpha$-stable noise without any lower bounds on 
$\beta$, which actually would not be relevant from the point of view of the limit behavior 
of the system; see Corollary \ref{c:2} and Example \ref{e:21} below.

\color{black}

The \emph{Blumenthal--Getoor index} $\alpha_{\BG}(Z)$ of a L\'evy process $Z$ is defined  by
\ban
\alpha_{\BG}(Z)=\inf\Big\{\alpha>0\colon \sup_{r\in (0,1]}r^{\alpha}\mu(z\colon |z|>r)<\infty\Big\}.
\ean
Note that for an arbitrary L\'evy measure $\mu$ the following estimate holds true:
\be
\label{BG1}
r^{2}\mu(z\colon |z|>r)=r^2\int_{|z|>r}\mu(\di z)\leq \int_{\bR}(z^2\wedge 1)\,\mu(\di z)<\infty;
\ee
that is, $\alpha_{\BG}(Z)\in [0,2]$.  For a family of L\'evy processes $\{Z^\eps\}_{\eps\in (0,1]}$ with 
the L\'evy measures $\{\mu^\eps\}_{\eps\in (0,1]}$, we define its Blumenthal--Getoor index $\alpha_{\BG}(\{Z^\eps\})$ by
\ba
\label{e:BG}
\alpha_{\BG}(\{Z^\eps\})=\inf\Big\{\alpha>0\colon \sup_{r\in (0,1]} \sup_{\eps\in (0,1]}r^{\alpha}\mu^\eps(z\colon|z|>r)<\infty\Big\}.
\ea
We will consider families $\{Z^\eps\}$ such that
\be
\label{fdd}
Z^\e\stackrel{\text{f.d.d.}}{\to} Z,\quad  \e\to 0.
\ee
Then by \cite{Feller-II-71}, Chapter XVII.2, Theorem 2,
\be\label{canonical_bound}
 \sup_{\eps\in (0,1]}\int_{\bR} (z^2\wedge 1)\, \mu^\eps(\di z)<\infty,
\ee
which again provides  $\alpha_{\BG}(\{Z^\eps\})\in [0,2]$.

\subsection{The simplest non-Gaussian case: compound Poisson impulses\label{s2}}

In this section we consider the case where $\mathbf{H}_{\text{CP}}$ and  \eqref{fdd} hold true, 
and the limiting process $Z$ is compound Poisson. To avoid inessential complications, we assume that the
jump arrival times and jump amplitudes for $\{Z^\eps\}$ converge a.s.:
\be\label{as}
\tau_k^\eps\to \tau_k, \quad J_k^\eps\to J_k, \quad \eps\to 0, \quad k\geq 1,
\ee
where $\{\tau_k\}_{k\geq 1}$ $\{J_k\}_{k\geq 1}$ are corresponding are jump arrival times and jump amplitudes for $Z$. \color{black}
Denote by
\ban
N_t=\sum_{k=1}^\infty  \bI_{[\tau_k,\infty)}(t),\quad t\geq 0,
\ean
the counting
process for $Z$, so that
\ban
Z_t=\sum_{k=1}^{N_t} J_k.
\ean
Let the initial position and velocity $x_0$, $v_0$ be fixed, and let $(X^\e_t, V^\e_t)_{t\geq 0}$ be the corresponding
solution to the system \eqref{e:2bis}.

\begin{thm}
\label{t:cpp}
For any $t>0$, we have the following convergence a.s.\ as $\e\to 0$:
\begin{enumerate}
 \item for $\beta<2$,
\ba
\label{beta<2}
X^\e_t\to X_t= x_0+ 
\frac{1}{2-\beta}|v_0|^{2-\beta}\sgn v_0 +\frac{1}{2-\beta}\sum_{k=1}^{N_t} |J_k|^{2-\beta} \sgn J_k, 
\ea
\item for $\beta=2$,
\ban
\Big(\ln\frac{1}{\eps}\Big)^{-1} X^\e_t \to X_t=\sgn v_0+
\sum_{k=1}^{N_t} \sgn J_k, 
\ean
 \item 
 for $\beta> 2$,     
\ban
\e^{\frac{\beta-1}{\beta-2}}X^\e_t&\to X_t=\frac{(\beta-1)^{\frac{\beta-2}{\beta-1}}}{\beta-2}
\Big[\tau_1^{\frac{\beta-1}{\beta-2}} \sgn v_0 +
\sum_{k=2}^{N_t} (\tau_k-\tau_{k-1})^{\frac{\beta-1}{\beta-2}} \sgn J_{k-1}
+  (t-\tau_{N_t})^{\frac{\beta-1}{\beta-2}} \sgn J_{N_t} \Big] .
\ean
\end{enumerate}
\end{thm}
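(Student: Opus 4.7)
The plan is to exploit the piecewise deterministic structure of the dynamics. Since each $Z^\e$ is compound Poisson and the convergence \eqref{as} is almost sure, I fix $\omega$ in a set of full probability on which $Z$ has no jump at $t$ and $\tau_k^\e\to\tau_k$, $J_k^\e\to J_k$. Then for all sufficiently small $\e$, the process $Z^\e$ has exactly $N_t$ jumps in $[0,t]$, and on each of the $N_t+1$ inter-jump intervals $V^\e$ satisfies the deterministic ODE $\dot V^\e=-\e^{-1}|V^\e|^\beta\sgn V^\e$. The rescaling $u=(s-\tau_{k-1}^\e)/\e$ turns this into the unit-speed equation \eqref{e:0}, whose solution $W$ is given in closed form by \eqref{sol_v}.

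Write $V_k^{\e,+}$ and $V_k^{\e,-}$ for the velocity just after and just before the $k$-th jump (with $V_0^{\e,+}:=v_0$), so that $V_k^{\e,+}=V_k^{\e,-}+J_k^\e$. A change of variables gives
\ben
\int_{\tau_{k-1}^\e}^{\tau_k^\e}\frac{V^\e(s)}{\e}\,\di s=\int_0^{(\tau_k^\e-\tau_{k-1}^\e)/\e} W_{k-1}(u)\,\di u,
\een
where $W_{k-1}$ solves \eqref{e:0} with initial condition $V_{k-1}^{\e,+}$, and an analogous formula holds on the last stretch $[\tau_{N_t}^\e,t]$. Induction on $k$ yields $V_k^{\e,+}\to J_k$: the pre-jump velocity $V_k^{\e,-}=W_{k-1}((\tau_k^\e-\tau_{k-1}^\e)/\e)\to 0$ since by \eqref{sol_v} the solution $W$ either vanishes in finite time ($\beta<1$), decays exponentially ($\beta=1$), or decays polynomially like $u^{-1/(\beta-1)}$ ($\beta>1$), while $(\tau_k^\e-\tau_{k-1}^\e)/\e\to\infty$.

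The three claims now reduce to regime-dependent asymptotics of $\int_0^T W(u)\,\di u$ as $T\to\infty$. For $\beta<2$ this integral converges to $F(W(0))=(2-\beta)^{-1}|W(0)|^{2-\beta}\sgn W(0)$, as already noted in the introduction, and summing the contributions (starting from $v_0$ on $[0,\tau_1^\e]$ and from $V_{k-1}^{\e,+}\to J_{k-1}$ on the remaining intervals) yields (i). For $\beta=2$ the explicit form \eqref{sol_v} gives $\int_0^T W(u)\,\di u=\sgn W(0)\ln(1+|W(0)|T)$; inserting $T=(\tau_k^\e-\tau_{k-1}^\e)/\e$ and dividing by $\ln(1/\e)$, each interval contributes $\sgn J_{k-1}$ in the limit, giving (ii). For $\beta>2$ a direct calculation from \eqref{sol_v} shows that $\int_0^T W(u)\,\di u\sim (\beta-2)^{-1}((\beta-1)T)^{(\beta-2)/(\beta-1)}\sgn W(0)$ as $T\to\infty$; pulling out the corresponding power of $\e$ and summing over the sub-intervals yields (iii). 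The only genuine technical point is the verification that $V_k^{\e,-}\to 0$ in the cases $\beta\geq 1$ where relaxation is not completed in finite time, but as outlined above this is quantified directly from \eqref{sol_v} and presents no further obstacle; once it is in hand, everything else is deterministic bookkeeping.
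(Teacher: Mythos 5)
Your proof is correct and follows essentially the same route as the paper's in Section~\ref{s:31}: both exploit the piecewise-deterministic structure of the compound-Poisson-driven system, solve the inter-jump ODE in closed form via \eqref{sol_v}, use the almost sure convergence \eqref{as} to show the pre-jump velocities vanish, and read off the three regimes from the asymptotics of the integrated velocity. The only cosmetic difference is that you phrase the limit in terms of $\int_0^T W(u)\,\di u$ as $T\to\infty$, whereas the paper records the closed-form $\mathbf{I}^\e_t(v)$, which after the substitution $u=s/\e$ is precisely the same object.
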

The proof of this theorem is postponed to Section \ref{s:31}

In the above Theorem,
the considerably different limits in the case 1 and the cases 2, 3
are caused by the different dissipativity properties of the system \eqref{e:0} discussed in the Introduction. For $\beta<2$, the complete
response to the perturbation of the velocity is finite, and is given by the function
\be
\label{F}
F(v)=\frac{1}{2-\beta}|v|^{2-\beta}\sgn v.
\ee
Note that the right hand side in \eqref{beta<2} is just the sum of the initial position $x_0$, the response
which corresponds to the initial velocity $v_0$, and the responses to the random impulses which had arrived
into the system up to the time $t$. Similar additive structure remains true in the cases 2 and 3 as well,
however for $\beta\geq 2$ the complete response of the system to every single perturbation is infinite,
which explains the necessity to introduce a proper scaling. For $\beta>2$, this also leads to necessity to take
into account the jump arrival times.
Note that in all three regimes, the initial value $v_0$ of the velocity has a natural interpretation
as a single jump with the amplitude $J_0=v_0$, which occurs at the initial time instant $\tau_0=0$.

\subsection{General setup\label{s:23}} 

This section contains the main results of the paper, which concerns the system with infinite jump intensity of  the limiting L\'evy noise.
The first statement actually shows that the velocity component of \eqref{e:2bis},
under very wide assumptions on the L\'evy noise, has a dissipative behaviour similar to the one of $v_t$,  discussed  in the Introduction.
\begin{thm}\label{t:V} Assume $\mathbf{H}_{\emph{sym}}$ and \eqref{fdd} hold true. If $\beta<0$, then  assume in addition $\mathbf{H}_{\emph{CP}}$.
Then the following statements hold true:
\begin{itemize}
  \item[(i)] for any  $T>0$ and any initial value $v_0,$
\ba\label{213}
\lim_{R\to \infty} \sup_{\e\in (0,1]} \P\Big(\sup_{t\in[0,T]}|V^\e_t|>R\Big)=0;
\ea
  \item[(ii)]  for any $t>0$, any initial value $v_0,$ and any $\delta>0$,
\ba\label{214}
\lim_{\e\searrow 0} \P(|V^\e_t|>\delta)=0.
\ea
\end{itemize}
\end{thm}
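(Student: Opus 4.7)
My approach decomposes the L\'evy driver into small and large jumps and exploits the dissipativity of the friction $\tilde b(v):=-|v|^\beta\sgn v$. Write $Z^\eps = Z^{\eps,s}+Z^{\eps,\ell}$, where $Z^{\eps,s}$ is the martingale of jumps with $|z|\le 1$ (with uniformly bounded quadratic characteristic $\int_{|z|\le 1}z^2\mu^\eps(\di z)\le C$ by \eqref{canonical_bound}) and $Z^{\eps,\ell}$ is the compound Poisson process of jumps with $|z|>1$ (with uniformly bounded intensity $\lambda^\eps=\mu^\eps(|z|>1)\le C$). Moreover \eqref{fdd} combined with Feller's theorem on L\'evy triples forces the tail tightness $\sup_\eps\mu^\eps(|z|>R)\to 0$ as $R\to\infty$.

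For \emph{part} (i), I apply It\^o's formula to $(V^\eps)^2$ after localising on the event $A_R:=\{\max_{s\le T}|\Delta Z^\eps_s|\le R\}$, whose complement satisfies $\P(A_R^c)\le T\sup_\eps\mu^\eps(|z|>R)\to 0$ as $R\to\infty$ uniformly in $\eps$. On $A_R$ the jump sizes of $V^\eps$ are bounded by $R$, the compensator of the jump quadratic variation is $\le Ct(1+R^2)$, and the drift contribution $-2|V^\eps|^{\beta+1}/\eps$ is non-positive. This yields an a priori $L^2$-bound $\E[(V^\eps_t)^2;A_R]\le v_0^2+C_RT$ uniformly in $\eps$ and $t\le T$; Doob's maximal inequality applied to the martingale remainder upgrades it to $\E[\sup_{t\le T}(V^\eps_t)^2;A_R]\le C'_R$, which combined with the vanishing of $\P(A_R^c)$ yields \eqref{213}.

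For \emph{part} (ii), fix $t>0$, $\delta>0$, and introduce a mesoscale $\tau=\tau(\eps)\to 0$ with $\tau/\eps\to\infty$ (e.g.\ $\tau=\sqrt\eps$). Let $\cE$ denote the event that $Z^{\eps,\ell}$ has no jump in $(t-\tau,t]$; then $\P(\cE^c)\le\lambda^\eps\tau\to 0$, and on $\cE$ the process $V^\eps\big|_{[t-\tau,t]}$ is driven only by $Z^{\eps,s}$. Let $v^d$ be the deterministic solution of $\dot v=\tilde b(v)/\eps$ with $v^d_{t-\tau}=V^\eps_{t-\tau}$; by \eqref{sol_v} combined with $\tau/\eps\to\infty$, on $\{|V^\eps_{t-\tau}|\le R\}$ one has $|v^d_t|\to 0$ as $\eps\to 0$ uniformly in the initial datum, for every $\beta\in\bR$ (finite-time absorption if $\beta<1$, explicit algebraic/exponential decay otherwise). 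For $\beta\ge 0$, the friction $\tilde b$ is non-increasing on $\bR$, so the monotonicity $(V^\eps-v^d)(\tilde b(V^\eps)-\tilde b(v^d))\le 0$ kills the drift in It\^o's formula applied to $W^2:=(V^\eps-v^d)^2$; the symmetry of $\mu^\eps$ removes the first-order jump compensator, leaving only the small-jump variance, $\E[W_t^2\bI_\cE]\le\tau\int_{|z|\le 1}z^2\mu^\eps(\di z)\le C\tau\to 0$. A union bound with the tightness of $V^\eps_{t-\tau}$ from (i) then yields \eqref{214}. For $\beta<0$, where $\tilde b$ ceases to be monotone near the origin, the additional hypothesis $\mathbf{H}_{\emph{CP}}$ renders $V^\eps$ piecewise deterministic between the (finitely many) jumps of $Z^\eps$, and the strong absorbing property of \eqref{sol_v} near $0$ (absorption in time $\le\eps|v|^{1-\beta}/(1-\beta)$) replaces the coupling estimate and delivers the same conclusion.

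\emph{Main obstacle.} The principal technical difficulty is that $\mu^\eps$ possesses, uniformly in $\eps$, only the canonical moment $\int(z^2\wedge 1)\mu^\eps(\di z)<\infty$ (so first or second moments of the large jumps may diverge) while $\tilde b$ is non-Lipschitz and, for $\beta<0$, singular at the origin. The localisation on $A_R$ together with Feller tightness bypasses the moment issue in part (i); the scale separation $\tau/\eps\to\infty$, combined with the monotonicity of $\tilde b$ (for $\beta\ge 0$) or the piecewise-deterministic path structure enforced by $\mathbf H_{\emph{CP}}$ (for $\beta<0$), is the key mechanism driving the pointwise convergence in part (ii).
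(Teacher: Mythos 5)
Your treatment of part (i) is essentially the paper's own argument: truncate/condition away the large jumps (the paper works with $Z^{\eps,A}$ on the full probability, you condition on $A_R$ — the two are equivalent since the restriction of the Poisson point measure to $\{|z|\leq R\}$ is independent of the large-jump part), apply It\^o to $|V^\eps|^2$, drop the non-positive drift $-\frac{2}{\eps}\int|V^\eps|^{\beta+1}$, and conclude by Doob and localization. Your part (ii) for $\beta\geq 0$ is a \emph{genuinely different} route: you couple $V^\eps$ on a mesoscale $(t-\tau,t]$ with the deterministic flow $v^d$ started at $V^\eps_{t-\tau}$, use monotonicity of $v\mapsto -|v|^\beta\sgn v$ to kill the drift in It\^o's formula for $(V^\eps-v^d)^2$, and let $\tau\to 0$, $\tau/\eps\to\infty$. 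The paper instead works with occupation-time estimates: from the energy identity they get $\E\int_0^T|V^\eps_s|^{\beta+1}\bI(V^\eps_s\neq 0)\,\di s\to 0$, hence $\int_0^T\bI(|V^\eps_s|>\delta)\,\di s\to 0$ in probability (or $\int_0^T\bI(V^\eps_s\neq 0)\,\di s\to 0$ when $\beta\leq -1$), then introduce the stopping time $\theta^\eps_\zeta(t_0)=\inf\{s\geq t_0:|V^\eps_s|\leq\zeta\}$ which converges to $t_0$, and use optional sampling at $\theta^\eps_\zeta(t_0)$ in the $|V^\eps|^2$ martingale problem. Your coupling argument is a valid alternative in the monotone regime $\beta\geq 0$; the paper's approach has the advantage of covering all $\beta$ in one stroke, since it never needs the one-sided Lipschitz inequality for the drift.

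For $\beta<0$ your sketch has a real gap. You propose to replace the coupling by ``absorption in time $\leq\eps|v|^{1-\beta}/(1-\beta)$'' of the deterministic flow, but for this to yield $\P(|V^\eps_t|>\delta)\to 0$ one would need, at the very least, that $Z^\eps$ has no jump in the absorption window $(t-c\eps,t]$ with probability tending to $1$. The jump intensity $\lambda^\eps=\mu^\eps(\bR)$ of the compound Poisson processes $Z^\eps$ is, however, \emph{unbounded} as $\eps\to 0$ under \eqref{fdd} alone (only $\int(z^2\wedge 1)\,\mu^\eps(\di z)$ is controlled, so $\lambda^\eps$ can be of order $\ell(\eps)^{-2}$ for an arbitrarily small truncation level $\ell(\eps)$), and in general $\lambda^\eps\cdot\eps\not\to 0$. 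A single absorption argument therefore does not close; one would need an iterated bootstrap keeping track of how small the post-jump state is, which you do not supply and which is not obviously easier than the paper's occupation-time argument. Note also that the paper's proof quietly justifies the It\^o identity for $|V^\eps|^2$ in the singular regime $\beta<0$ directly from the piecewise-explicit representation \eqref{e:V} (integrability of $|V^\eps_s|^{\beta+1}$ along each deterministic arc), a step your proposal glosses over in part (i) as well, though there it is only a technicality.
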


The main result of the entire paper is presented in the following Theorem.

\begin{thm}
\label{t:main}
Let conditions of Theorem \ref{t:V} hold true. Assume \eqref{underloaded}, and in the case $\alpha_{\emph{BG}}(\{Z^\eps\})=2$ assume in addition that
\ba\label{BG0}
\lim_{r \searrow 0}\sup_{\e\in(0,1]}\int_{|z|\leq r} z^2 \mu^\e(\di z)=0.
\ea
Then $X^\e\stackrel{\emph{f.d.d.}}{\to}X$, $\eps\to 0$, where
\ba\label{Xlim}
X_t=x_0+ \frac{|v_0|^{2-\beta}}{2-\beta}\sgn v_0+\frac{1}{2-\beta}\int_0^t\int |z|^{2-\beta}\sgn z\, \widetilde N(\di z\, \di s),\quad t\geq 0,
\ea
and  $\wt N$ is the compensated Poisson random measure, which corresponds to the L\'evy process $Z$.
\end{thm}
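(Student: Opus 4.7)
The idea is to encode the scaled position in a closed form via It\^o's formula applied to a cleverly chosen function of the velocity, and then identify each piece of that formula in the limit. Introducing the \emph{response function} $F(v)=\frac{|v|^{2-\beta}}{2-\beta}\sgn v$ from \eqref{F}, one has the algebraic identity $F'(v)|v|^\beta\sgn v=v$, so that applying It\^o's formula to the pure-jump semimartingale $V^\e$ and using $X^\e_t=x_0+\eps^{-1}\int_0^tV^\e_s\,\di s$ yields after rearrangement
\begin{equation*}
X^\e_t=x_0+F(v_0)-F(V^\e_t)+S^\e_t,\qquad S^\e_t:=\sum_{s\le t}\bigl[F(V^\e_{s-}+\Delta Z^\e_s)-F(V^\e_{s-})\bigr].
\end{equation*}
Since this is a pathwise identity, the claimed f.d.d.\ convergence $X^\e\to X$ is reduced to the in-probability convergence of $F(V^\e_t)$ and $S^\e_t$ at each (finite family of) times $t$. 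By Theorem~\ref{t:V}(ii), $F(V^\e_t)\to 0$, so everything boils down to identifying the limit of $S^\e_t$. The rigorous justification of It\^o's formula is delicate when $F'$ blows up at the origin (i.e.\ $\beta\in[1,2)$) and when the drift is singular (i.e.\ $\beta<0$); I expect this is exactly what dictates the split into the \emph{regular} and \emph{non-regular/quasi-ergodic} cases of Sections~\ref{s4}--\ref{s5}: regularise $F$ off zero and control the occupation time of $V^\e$ near the origin via an ergodic argument.

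To treat $S^\e_t$, introduce a large/small-jump cutoff $\rho>0$ and split $Z^\e=Z^{\e,\rho}+R^{\e,\rho}$ into a compound-Poisson part carrying jumps of amplitude $>\rho$ and a remainder, with a corresponding split $S^\e_t=S^{\e,\rho,\text{large}}_t+S^{\e,\rho,\text{small}}_t$. For the large-jump part, $Z^{\e,\rho}$ has finite intensity uniformly in $\e$, and between two consecutive such jumps the velocity relaxes deterministically on the fast $\eps^{-1}$-scale according to \eqref{sol_v}; combined with Theorem~\ref{t:V}(ii) this forces $V^\e_{s-}\to 0$ immediately before each of the finitely many large jumps in $[0,t]$. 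By continuity of $F$ away from zero and the joint convergence \eqref{as} of jump times and amplitudes,
\begin{equation*}
S^{\e,\rho,\text{large}}_t\xrightarrow{\text{f.d.d.}}\sum_{s\le t:\,|\Delta Z_s|>\rho}F(\Delta Z_s),
\end{equation*}
which is the cut-off version of Theorem~\ref{t:cpp} and converges, as $\rho\searrow 0$, by symmetry of $\mu$ and the $L^2$-construction of the L\'evy--It\^o integral, to $S_t:=(2-\beta)^{-1}\int_0^t\!\int|z|^{2-\beta}\sgn z\,\widetilde N(\di z\,\di s)$, the correct right-hand side of \eqref{Xlim}.

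The main obstacle is the uniform-in-$\e$ negligibility of the small-jump remainder $S^{\e,\rho,\text{small}}_t$ as $\rho\searrow 0$. Symmetry of each $\mu^\e$ makes the compensator of this sum vanish, so it is a pure-jump martingale and its $L^2$-norm is controlled by
\begin{equation*}
\E\int_0^t\int_{|z|\le\rho}\bigl[F(V^\e_s+z)-F(V^\e_s)\bigr]^2\mu^\e(\di z)\,\di s.
\end{equation*}
An elementary case analysis separating $|z|\le|v|/2$ from $|z|>|v|/2$ yields the pointwise bound $|F(v+z)-F(v)|^2\le C|z|^{2(2-\beta)}$, plus a correction of the form $C|v|^{2(1-\beta)}|z|^2$ when $\beta<1$; combined with the uniform tightness of $V^\e$ from Theorem~\ref{t:V}(i), the above expectation is dominated by $t\cdot\sup_{\e\in(0,1]}\int_{|z|\le\rho}|z|^{2(2-\beta)}\mu^\e(\di z)$ plus lower-order terms. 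By the definition of $\alpha_{\BG}(\{Z^\e\})$ in \eqref{e:BG} and the balance condition $\alpha_{\BG}+2\beta<4$, this quantity vanishes as $\rho\searrow 0$ uniformly in $\e$; the critical case $\alpha_{\BG}=2$ is precisely where the supplementary integrability assumption \eqref{BG0} is needed. Sending first $\e\to 0$ for each fixed $\rho$ and then $\rho\to 0$ completes the proof.
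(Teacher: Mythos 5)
Your pathwise identity
\begin{equation*}
X^\e_t=x_0+F(v_0)-F(V^\e_t)+S^\e_t
\end{equation*}
is correct, but the claim that symmetry of $\mu^\e$ kills the compensator of $S^{\e,\rho,\text{small}}_t$ is false, and this is where the proposal breaks down. The predictable compensator of the jump sum is
\begin{equation*}
\int_0^t\int_{|z|\le\rho}\bigl[F(V^\e_s+z)-F(V^\e_s)\bigr]\,\mu^\e(\di z)\,\di s
=\int_0^t\int_{0<z\le\rho}\bigl[F(V^\e_s+z)+F(V^\e_s-z)-2F(V^\e_s)\bigr]\,\mu^\e(\di z)\,\di s,
\end{equation*}
and the symmetrized integrand $F(v+z)+F(v-z)-2F(v)$ vanishes only at $v=0$ (because $F$ is odd); for $V^\e_s\neq 0$ it is nonzero, so the compensator is the integral $\int_0^t H^\eps(V^\e_s)\,\di s$ with $H^\eps$ as in \eqref{Heps}. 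Once you subtract the compensated martingale (which your $L^2$ analysis does control), you are left with this non-martingale drift term, and \emph{that} is the entire difficulty of Theorem \ref{t:main}. The paper devotes Lemma \ref{l:H} to showing $\{H^\eps\}$ is equicontinuous at $0$ in the regular range $\alpha+\beta<2$, from which $\int_0^t H^\eps(V^\e_s)\,\di s\to 0$ follows via the occupation-time bound \eqref{V_null_int}; in the non-regular range $H^\eps$ becomes unbounded at the origin (Fig.~\ref{f:H}), and the whole quasi-ergodic machinery of Section \ref{s5} (space--time rescaling to $Y^\e$, Lyapunov bounds in Lemma \ref{lLyap}, the dissipation estimates \eqref{diss_>1}, \eqref{diss_<1}, and the hitting-time tails of Lemmas \ref{l52}--\ref{l53}) exists solely to show that this compensator integral is negligible. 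Your proposal has no substitute for any of it.

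Two further, smaller points. First, for infinite jump intensity the pathwise sum $S^\e_t$ need not converge absolutely; you have to formulate the It\^o identity directly with the compensated measure $\widetilde N^\e$, as in \eqref{Ito_formal}--\eqref{Meps}, and then the drift correction $\int_0^t H^\eps(V^\e_s)\,\di s$ appears automatically. Second, justifying the It\^o formula itself is indeed nontrivial when $F\notin C^2$ (i.e.\ $\beta\in[1,2)$), and the paper handles this via an explicit mollification (Lemma \ref{lIto} and the $\widehat F=F+\bar F$ device in Section \ref{s5}); you correctly anticipated that a regularisation is needed, but the reason for the regular/non-regular split is not the smoothness of $F$ per se, it is the behaviour of $H^\eps$ near the origin. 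Your treatment of the large-jump part (reduction to the compound Poisson picture, $V^\e_{s-}\to 0$ before each large jump) and of the martingale $L^2$-bound (split at $|z|=\rho$, use of $\alpha_{\BG}$ and \eqref{BG0}) does match the paper's \eqref{M_conv_large}--\eqref{M_conv_small} and \eqref{Mlim}; it is the missing compensator that is the gap.
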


Condition \eqref{BG0} prevents accumulation of small jumps for the family $\{\mu^\eps\}$. 
If $Z^\eps$ is obtained from one process $Z$ by the truncation of small jumps procedure, 
explained above, then \eqref{BG0} holds true immediately, and \eqref{underloaded} is actually 
the condition on the Blumenthal--Getoor index of $Z$. This leads  to the following.

\begin{cor}
\label{c:2}
Let $Z$ be a symmetric  L\'evy process without a Gaussian component, and let its Blumenthal--Getoor index satisfy $\alpha_{\emph{BG}}(Z)+2\beta<4$.
Let either $Z^\eps=Z$ (in this case $\beta\geq 0$), or $Z^\eps$ be a compound Poisson process, obtained from $Z$ by truncations of the jumps with
amplitudes smaller than $\ell(\eps)$ (in this case $\beta\in \mathbb{R}$ can be arbitrary). Let
\ban
\ell(\eps)\to 0, \quad \eps\to 0.
\ean
Then the position component $X^\eps$ of the system \eqref{e:2bis} satisfies \eqref{Xlim}.
\end{cor}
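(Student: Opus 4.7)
The plan is to deduce the corollary directly from Theorem \ref{t:main} by verifying its hypotheses for each of the two choices of the family $\{Z^\eps\}$. In the case $Z^\eps\equiv Z$ there is essentially nothing to check: $\mathbf H_{\text{sym}}$ holds by assumption, $Z^\eps\stackrel{\text{f.d.d.}}{\to}Z$ trivially, $\alpha_{\BG}(\{Z^\eps\})=\alpha_{\BG}(Z)$, and \eqref{BG0} reduces to the standard observation that $r^2\to \int_{|z|\leq r}z^2\,\mu(\di z)$ vanishes as $r\searrow 0$ because $\int(z^2\wedge 1)\,\mu(\di z)<\infty$. Thus the entire content lies in the truncation case, and I would concentrate on that.

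In the truncation case I would first observe that each $Z^\eps$ is a compound Poisson process with symmetric L\'evy measure $\mu^\eps(\di z)=\bI_{|z|>\ell(\eps)}\mu(\di z)$, so both $\mathbf H_{\text{CP}}$ and $\mathbf H_{\text{sym}}$ are satisfied and Theorem \ref{t:V}, Theorem \ref{t:main} apply in all regimes of $\beta$. To obtain \eqref{fdd} I would use the It\^o--L\'evy decomposition: the difference $Z_t-Z^\eps_t=\int_0^t\int_{|z|\leq \ell(\eps)}z\,\wt N(\di z\,\di s)$ is a square-integrable martingale with
\ban
\E(Z_t-Z^\eps_t)^2= t\int_{|z|\leq \ell(\eps)} z^2\,\mu(\di z)\longrightarrow 0,\quad \eps\to 0,
\ean
by monotone convergence, so $Z^\eps\to Z$ uniformly on compacts in probability and in particular in the sense of finite-dimensional distributions.

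It remains to control the Blumenthal--Getoor index of the family and, if it equals $2$, the uniform small-jump condition \eqref{BG0}. Since $\mu^\eps\leq \mu$ as measures on $\bR$, for every $r\in(0,1]$ and every $\eps\in(0,1]$ one has
\ban
r^\alpha\mu^\eps(z\colon|z|>r)\leq r^\alpha\mu(z\colon|z|>r),
\ean
and therefore $\alpha_{\BG}(\{Z^\eps\})\leq \alpha_{\BG}(Z)$. Combined with the standing assumption $\alpha_{\BG}(Z)+2\beta<4$, this gives \eqref{underloaded}. Similarly,
\ban
\int_{|z|\leq r} z^2\,\mu^\eps(\di z)=\int_{\ell(\eps)<|z|\leq r} z^2\,\mu(\di z)\leq \int_{|z|\leq r} z^2\,\mu(\di z),
\ean
and the right hand side tends to $0$ as $r\searrow 0$ independently of $\eps$, which verifies \eqref{BG0}.

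The argument is essentially a bookkeeping exercise, so I do not expect any substantive obstacle; the only mild subtlety is ensuring that the mere vague convergence of $\mu^\eps$ to $\mu$ suffices for \eqref{fdd}, which is why I prefer the direct $L^2$ martingale argument above rather than a general appeal to Theorem~2 of \cite{Feller-II-71}, Chapter XVII.2. With all hypotheses verified, Theorem~\ref{t:main} yields \eqref{Xlim} and the corollary follows.
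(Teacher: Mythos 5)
Your proposal is correct and follows essentially the same route as the paper, which states the corollary as an immediate consequence of Theorem~\ref{t:main} after noting that \eqref{BG0} holds trivially under truncation and that \eqref{underloaded} reduces to a condition on $\alpha_{\BG}(Z)$; your write-up simply makes explicit the verification of \eqref{fdd}, of $\alpha_{\BG}(\{Z^\eps\})\leq\alpha_{\BG}(Z)$, and of the symmetry hypotheses. (One cosmetic remark: the convergence $\int_{|z|\leq\ell(\eps)}z^2\,\mu(\di z)\to 0$ is continuity from above of the finite measure $z^2\bI_{|z|\leq 1}\,\mu(\di z)$ at $\{0\}$, using $\mu(\{0\})=0$, rather than monotone convergence, which concerns increasing sequences.)
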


Note that the right hand side in \eqref{Xlim} is a L\'evy process with the L\'evy measure
\be
\label{filter}
\mu^X(B)=\mu\Big(\Big\{z\colon \frac{|z|^{2-\beta}\sgn z}{2-\beta}\in B\Big\}\Big), \quad B\in \rB(\bR).
\ee
Theorem \ref{t:main} actually shows that the Langevin equation \eqref{e:1} with small L\'evy noise,
considered at the macroscopic time scale, performs a non-linear filter of the noise,
with the transformation of the jump intensities given by \eqref{filter}.
Since $\mu$ is symmetric and the response function $F(v)=\frac{1}{2-\beta}|v|^{2-\beta}\sgn v$ is odd,
\ban
\int_0^t \int F(z)\, \widetilde N(\di z\,\di s)=L^2\text{-}\lim_{\delta\to 0}\sum_{s\leq t}F(|\triangle Z_s|)\cdot \bI( |\triangle Z_s|>\delta).
\ean
In other words, the right hand side in \eqref{Xlim} has exactly the same form as
\eqref{beta<2}.
Note that the assumption \eqref{underloaded} again requires $\beta<2$, since $\alpha\geq 0$.
Hence, the operation of the aforementioned non-linear filtering can be shortly described as follows:
every jump $z$ of the input process $Z$ is transformed to the jump $F(z)$
of the output process. From this point of view, the assumption  \eqref{underloaded}  can be interpreted as
a condition for the jumps to arrive ``sparsely'' enough, for the system to be able to filter them independently.
The following example, in particular,  shows that this assumption is sharp, and once it fails, the  asymptotic regime for \eqref{e:2bis}
may change drastically.

\begin{exam}
\label{e:21}Let $Z$ be a symmetric $\alpha$-stable process with the L\'evy measure
\ban
\mu(\di z)=c\frac{\di z}{|z|^{\alpha+1}},\quad c>0,
\ean
and the corresponding $\{Z^\eps\}$ be the same as in Corollary \ref{c:2}. Note that $\alpha_\emph{BG}(Z)=\alpha$, thus
Theorem \ref{t:main} requires   $\alpha+2\beta<4$. The limiting process $X$ in \eqref{Xlim} is also a symmetric stable process with the L\'evy measure
\ban
\mu^X(\di z)=c_X \frac{\di z}{|z|^{\alpha_X+1}},
\ean
where
\ban
\alpha_X=\frac{\alpha}{2-\beta}, \quad c_X=\frac{c}{(2-\beta)^{\alpha+1}}.
\ean
Note that the new stability index $\alpha_X$ is positive, and $\alpha_X<2$ exactly when $\alpha+2\beta<4$. On the one hand,
this is not surprising because we know from \cite{EonGra-15} that, once $\alpha+2\beta>4$, the properly scaled process $X^\e$ has a Gaussian limit. This example also shows one more aspect, at which the assumption $\beta\geq 0$ is too restrictive and non-natural. Namely, allowing $\beta$ to be an arbitrary real number, we can interpret the system \eqref{e:2bis} as a \text{non-linear} L\'evy filter which
processes an incoming symmetric $\alpha$-stable process $Z$ into a symmetric $\frac{\alpha}{2-\beta}$-stable process $X$ without any restriction
on the stability indices.

The boundary case $\alpha+2\beta=4$ is yet open for a study.
\end{exam}

Before proceeding with the proofs, let us give two more remarks. First, it will be seen from the proofs that for any $t>0$
\be
\label{Pconv}
X^\e_t -
x_0- \frac{|v_0|^{2-\beta}}{2-\beta}\sgn v_0-\frac{1}{2-\beta}\int_0^t \int |z|^{2-\beta}\sgn z\, \widetilde N^\eps(\di z\,\di s)\to 0,\quad \e\to 0,
\ee
in probability, where $\wt N^\e$ denotes the compensated Poisson random measures for the processes $Z^\eps$.
This is a stronger feature than just the weak convergence stated in Theorem \ref{t:main}.
Hence the non-linear filter, discussed above, actually operates with the trajectories of the noise rather than with its law.

Second, we consider the present paper as the first work devoted to the convergence of non-linear L\'evy filters and
restrict ourselves to the f.d.d.\ weak convergence (actually, the point-wise
convergence in probability), rather than the functional convergence.
In the compound Poisson case (Theorem \ref{t:cpp}), it can be easily
verified with the help of
explicit trajectory-wise calculations that the functional
convergence holds true in the $M_1$-topology for $\beta\leq 2$, and in the uniform topology for $\beta> 2$.
We believe that \eqref{Xlim} holds true in the $M_1$-topology,
similarly to the case $\beta=1$ studied in \cite{HinPav14} straightforwardly.
For the sake of reader's convenience and readability of the paper we prefer to pursue this question in subsequent works, probably  in a more
general setting.

\section{Proofs of preparatory results\label{s3}}

\subsection{Proof of Theorem \ref{t:cpp}\label{s:31}}

The solution of the system \eqref{e:2} can be written explicitly. Namely, denote
\ban
\mathbf{V}_t^\eps(v)=\left\{
      \begin{array}{ll}
         v \ex^{-t/\eps}, & \beta=1; \\
        \left(|v|^{1-\beta}-t(1-\beta)/\eps\right)_+^{1/(1-\beta)}\sgn v, & \hbox{otherwise,}
      \end{array}
    \right.
\ean
which is just the velocity component of the system \eqref{e:0} with $v_0=v$, taken at the macroscopic time scale $\eps^{-1} t$; see \eqref{sol_v}.
The integral of the velocity
\ban
\mathbf{I}^{\e}_t(v)=\frac{1}{\e}\int_0^t \mathbf{V}_s^\e(v)\,\di s,
\ean
can be also easily computed:
\ban
\mathbf{I}^{\e}_t(v)=\left\{
  \begin{array}{ll}
  v\Big(1-\ex^{-t/\eps}\Big), & \beta=1; \\
\displaystyle
\ln\Big( 1+ \frac{|v|t}{\e} \Big)\sgn v, & \beta=2;\\
 \displaystyle   \frac{1}{\beta-2}
\Big[\Big( |v|^{1-\beta}- (1-\beta)\frac{t}{\e} \Big)^\frac{\beta-2}{\beta-1}_+-|v|^{2-\beta}\Big]\sgn v, & \hbox{otherwise.}
  \end{array}
\right.
\ean
Then $(X_t^\eps, V_t^\eps)$, defined by \eqref{e:2bis}, can be expressed as follows:
\ba\label{e:V}
V^\e_t=\sum_{k=0}^\infty \mathbf{V}^\e_{(t-\tau_k)\wedge (\tau_{k+1}^\eps-\tau_k^\eps)}  \big(V_{\tau_k-}^\e+J_k^\eps\big) \bI_{[\tau_k^\eps,\tau_{k+1}^\eps)}(t)
\ea
and
\ba
\label{e:XI}
X^\e_t=x_0+\sum_{k=0}^\infty \mathbf{I}^\e_{(t-\tau_k^\eps)\wedge (\tau_{k+1}^\eps-\tau_k^\eps)}  \big(V_{\tau_k^\eps-}^\e+J_k^\eps\big)\cdot \bI_{[\tau_k^\eps,\infty)}(t),
\ea
where we adopt the notation
\ban
\tau_0^\eps=0, \quad J_0^\eps=v_0, \quad V_{\tau_0-}^\e=0.
\ean
Note that for any $\e>0$, $t\mapsto X^\e_t$ is continuous.
Since $\mathbf{V}_t^\e(v)$ and $\mathbf{I}_t^\e(v)$ are given explicitly, we now easily obtain the required statements.
First, observe that for each $t>0$ and $v\in \bR$,
\ban
\mathbf{V}_t^\eps(v)\to 0, \quad \eps\to 0,
\ean
hence
\be\label{dissV}
V_{\tau_k-}^\e\to 0, \quad \eps\to 0, \quad k\geq 0,
\ee
almost surely. Next, we have for $\beta<2$ for any $t>0$, $v\in \bR$
\ban
\mathbf{I}_t^\eps(v)\to F(v)=\frac{1}{2-\beta}|v|^{2-\beta}\sgn v, \quad \eps\to 0.
\ean
Since any fixed time instant $t>0$ with probability $1$ does not belong to the set $\{\tau_k\}_{k\geq 0}$, the latter relation combined with \eqref{dissV} gives
\ban
X_t^\e \to x_0+ \frac{1}{2-\beta}\sum_{k=0}^{N_t}  |J_k|^{2-\beta} \sgn J_k, \quad \eps\to 0,
\ean
almost surely. For $\beta=2$, for any  for $t>0$, $v\in \bR$ we have
\ban
\mathbf{I}_t^\eps(v)- \Big(\ln\frac{1}{\eps}\Big)\sgn v\to \ln(|v| t)\cdot \sgn v, \quad \eps\to 0.
\ean
Combined with \eqref{as} and  \eqref{dissV}, this gives
\ban
\Big(\ln\frac{1}{\eps}\Big)^{-1}X_t^\e \to \sum_{k=0}^{N_t} \sgn J_k, \quad \eps\to 0,
\ean
almost surely.
In the case $\beta>2$ the argument is completely analogous, and is based on the relation
\ban
\Big|\e^\frac{\beta-2}{\beta-1}\mathbf{I}_t^\eps(v)- \frac{(\beta-1)^\frac{\beta-2}{\beta-1}}{\beta-2}
t^\frac{\beta-1}{\beta-2} \sgn v\Big|\to 0, \quad \eps \to  0, \quad t\geq 0.
\ean
Note that opposite to the previous cases, this convergence holds \emph{uniformly} w.r.t.\ $t\in[0,T]$ for any $T>0$.
Recalling the exact formula \eqref{e:XI} we get that for any path
\ban
\e^{\frac{\beta-1}{\beta-2}}X^\e_t&
\to\frac{\beta^{\frac{\beta-1}{\beta-2}}}{\beta-2}
\sum_{k=0}^\infty
\Big(  (t-\tau_k) \wedge (\tau_{k+1}-\tau_k) \Big)^\frac{\beta-2}{\beta-1} \sgn J_k \cdot \bI_{[\tau_k,\infty)}(t)
\ean
pointwise for $t\geq 0$, and also uniformly on finite time intervals. Note that in this case, the limiting process $X$ is continuous. 
\hfill$\blacksquare$

\subsection{Proof of Theorem \ref{t:V}\label{s32}}

1. In what follows, we assume that all the processes $\{Z^\e\}_{\e\in(0,1]}$ are defined on the same
filtered space $(\Omega, \rF, \{\rF_t\}, \P)$. We will systematically use the following ``truncation of large jumps'' procedure.
For $A>1$, denote by $Z^{\eps, A}$ the truncation of the L\'evy process $Z^\eps$ at the level $A$, namely
\ban
Z_t^{\eps, A}=\int_0^t\int_{|z|\leq 1}z\wt N^\eps(\di z\,\di s)+\int_0^t\int_{1<|z|\leq A}z N^\eps(\di z\,\di s).
\ean
For a given $T>0$,
\ban
\P\Big(Z_t^\eps=Z_t^{\eps, A},  t\in [0, T]\Big)=\P\Big(N^\eps\big(\{z\colon |z|>A\}\times [0,T]\big)=0\Big)
=1-\exp\Big(-T\int_{|z|>A} \mu^\eps(\di z)\Big).
\ean
Recall that the convergence $Z^\e\stackrel{\text{f.d.d}}{\to} Z$, $\e\to 0$, of L\'evy processes yields
\ba\label{mu_conv}
\lim_{\e \downarrow 0}\int f(z)\,\mu^\e (\di z)=\int f(z)\,\mu (\di z)
\ea
for any $f\in C_b(\bR,\bR)$ such that $f(z)=0$ in a neighbourhood of the origin. This means that the tails of the L\'evy measures $\mu^\eps$ uniformly vanish at $\infty$:
\ban
\sup_{\eps\in(0,1]}\mu^\eps(z\colon |z|> A)\to 0, \quad A\to \infty.
\ean
That is,
for any $T>0$ and $\theta>0$ we can fix $A>0$ large enough such that
\ban
\inf_{\e\in(0,1]} \P\Big(Z_t^\eps=Z_t^{\eps, A},\  t\in [0, T]\Big)\geq 1-\theta.
\ean
Assume that for such $A$ we manage to prove statements (i), (ii) of the Theorem for the system \eqref{e:2bis} driven by $Z^{\eps, A}$ instead of
$Z^\eps$. Since this system coincides with the original one on a set of probability larger than $1-\theta$, we immediately get the
following weaker versions of \eqref{213} and \eqref{214}:
\ban
\limsup_{N\to \infty} \sup_{\e\in(0,1]} &\P\Big(\sup_{t\in[0,T]}|V^\e_t|>N\Big)\leq \theta,\\
\limsup_{\e\searrow 0} &\P(|V^\e_t|>\delta)\leq \theta.
\ean
Taking $A$ large enough, we can make $\theta$ arbitrarily small. Hence, in order to get the required statements, it is sufficient to prove the same statements under the additional assumption that, for some $A$,
\be\label{truncation}
\operatorname{supp}\mu^\e\subseteq [-A,A], \quad \eps\in (0,1].
\ee
2.
Let us proceed with the proof of \eqref{213}. By \eqref{truncation} and the symmetry of $\mu^\eps$, we have that
\ban
Z_t^\eps=Z_t^{\e,A}=\int_0^t \int_{-A}^A z \widetilde N^\e(\di s,\di z)
\ean
 is a square integrable martingale. 
We have
\be
\label{Ito}
|V^\e_t|^2=v_0^2-\frac{2}{\e}\int_0^t |V^\e_s|^{\beta+1} \bI( |V_s^\e|\not=0)\,\di s+
t \int_{-A}^A z^2 \mu^\e(\di z)+M^\eps_t,
\ee
where
\be\label{M_int}
M^\eps_t=2\int_0^t \int_{-A}^A z V^\e_{s-}\widetilde N^\e(\di s\,\di z)
\ee
is a local martingale. For $\beta\geq 0$, this follows by the It\^o formula applied to the process $V^\e$; for $\beta<0$, this can be derived directly from the representation \eqref{e:V} for $V^\e$ (recall that for $\beta<0$ each $Z^\eps$ is a compound Poisson process).
The sequence
\ban
\tau_m^\e:=\inf\{t\geq 0\colon |V^\e_t|>m\}, \quad m\geq 1,
\ean
is a localizing sequence for $M^\eps$ and thus
\ban
|V^\e_{t\wedge \tau^\e_m}|^2\leq v_0^2
+
T \int_{-A}^A z^2 \mu^\e(\di z)+M^\eps_{t\wedge \tau_m^\eps}.
\ean
By the Doob maximal inequality,
\ban
\E \sup_{t\in[0,T]}|M^\e_{t\wedge \tau^\e_m}|^2\leq 4\E |M^\e_{T\wedge \tau^\e_m}|^2
=16\cdot\E\int_0^{T\wedge \tau^\e_m}|V^\e_{s}|^2\, \di s\cdot \int_{-A}^A z^2 \mu^\e(\di z) .
\ean
This yields
\ban
\E \sup_{t\in[0,T]}|V^\e_{t\wedge \tau^\e_m}|^2
\leq  v_0^2 + T \int_{-A}^A z^2 \mu^\e(\di z) +  4\Big(T \int_{-A}^A z^2 \mu^\e(\di z)\Big)^{1/2}
\Big(\E \sup_{t\in[0,T]}|V^\e_{t\wedge \tau^\e_m}|^2\Big)^{1/2}.
\ean
Thus these exists a constant $C>0$, independent on $\eps$, such that
\ban
\sup_m \E \sup_{t\in[0,T]}|V^\e_{t\wedge \tau^\e_m}|^2\leq C.
\ean
Since $\tau^\eps_m\to \infty, m\to \infty$, a.s., by the Fatou lemma we get
\be
\label{L_2_V}
\E \sup_{t\in[0,T]}|V^\e_{t}|^2\leq C.
\ee
This yields \eqref{213} by the Chebyshev inequality.

\noindent
3. To prove \eqref{214}, we note that $M^\eps$ defined in \eqref{M_int} is a square integrable martingale by \eqref{L_2_V}.
Then by \eqref{Ito} we have
\be
\label{Ito_integrated}
\E |V^\e_T|^2=v_0^2-\frac{2}{\e}\E \int_0^T  |V^\e_s|^{\beta+1}\bI( |V_s^\e|\not=0)\,\di s+
T \int_{-A}^A z^2 \mu^\e(\di z).
\ee
Hence
\ban
\E \int_0^T  |V^\e_s|^{\beta+1}\bI( |V_s^\e|\not=0)\,\di s
\leq \frac{\e}{2}\Big(v_0^2+ T \int_{-A}^A z^2 \mu^\e(\di z)\Big)\to 0, \quad \eps\to 0.
\ean
For $\beta>-1$ this yields that, for any $\delta>0$,
\be
\label{V_null_int}
\int_0^T\bI(|V_s^\e|>\delta)\, \di s\to 0, \quad \eps\to 0,
\ee
in probability.  For $\beta\leq -1$, we have for any $R>0$
$$
\E \int_0^T  |V^\e_s|^{\beta+1}\bI(|V_s^\e|\not=0)\,\di s
\geq R^{\beta+1} \E\Big[ \bI\Big(\sup_{t\in[0,T]}|V^\e_t|\leq R\Big)\cdot \int_0^T\bI( |V_s^\e|\not=0)\, \di s\Big].
$$
Combined with \eqref{213}, this gives
\ban
\int_0^T\bI( |V_s^\e|\not=0)\, \di s\to 0, \quad \eps\to 0,
\ean
in probability. In each of these cases, we have that, for any given $\zeta>0$, $t_0\geq 0$, the stopping times
\ban
\theta^\eps_\zeta(t_0)=\inf\{t\geq t_0\colon |V_t^\e|\leq \zeta\}
\ean
satisfy
\be\label{theta_to0}
\theta^\eps_\zeta(t_0)\to t_0, \quad \eps\to 0
\ee
in probability.

Now we can finalize the proof of \eqref{214}. For a given $t>0$, fix $t_0\in [0, t)$ and $\zeta>0$, and consider the set
\ban
C_{\zeta, t_0, t}^\eps=\{\theta^\eps_\zeta(t_0)\leq t\}\in \rF_{\theta^\eps_\zeta(t_0)}.
\ean
Then by \eqref{Ito} and Doob's optional sampling theorem, we have
\ban
\E |V^\e_t|^2 \bI_{C_{\zeta, t_0, t}^\eps}&\leq \E |V^\e_{\theta^\eps_\zeta(t_0)} |^2 \bI_{C_{\zeta, t_0, t}^\eps}
+\E \Big(t-\theta^\eps_\zeta(t_0)\Big)\bI_{C_{\zeta, t_0, t}^\eps} \Big(\int_{-A}^A z^2 \mu^\e(\di z)\Big)
\\&\leq \zeta^2+(t-t_0) \Big(\int_{-A}^A z^2 \mu^\e(\di z)\Big).
\ean
This implies that
\ban
\P(|V^\e_t|>\delta)\leq \P(\Omega\setminus C^\eps_{\zeta, t_0, t}\color{black})+\frac{\zeta^2}{\delta^2}+\frac{t-t_0}{\delta^2} \Big(\int_{-A}^A z^2 \mu^\e(\di z)\Big).
\ean
By \eqref{theta_to0} we have
$$
\P(\Omega\setminus  C^\eps_{\zeta, t_0, t})\to 0, \quad \eps\to 0.
$$
Hence by \eqref{canonical_bound}
\ban
\limsup_{\eps\searrow 0}\P(|V^\e_t|>\delta)\leq \frac{\zeta^2}{\delta^2}+C\frac{t-t_0}{\delta^2}.
\ean
Since $\zeta>0$ and $t_0<t$ are arbitrary, this proves \eqref{214}.\hfill$\blacksquare$

\section{Proof of Theorem \ref{t:main}: regular case}\label{s4}

To simplify the notation, in what follows we fix $\alpha\in[0,2]$ such that $\alpha+2\beta<4$ and for some $C>0$
\ba\label{BG}
\sup_{\e\in(0,1]} \mu^\e(|z|\geq r)\leq Cr^{-\alpha},\quad r\in(0,1].
\ea
Such $\alpha$ exists by the assumption  \eqref{underloaded} and the definition of the 
Blumenthal--Getoor index for the family $\{Z^\eps\}$. If $\alpha_{BG}(\{Z^\eps\})<2$, we can 
take $\alpha<2$. Otherwise $\alpha=2$; recall that in this case  \eqref{BG0} is additionally assumed.

 We will prove Theorem \ref{t:main} in two different cases. First, we consider the  \emph{regular} case
\ba
\label{e:Xir}
(\alpha, \beta)\in \Xi_{\mathrm{regular}}=\Big\{\alpha\in[0,2],\  \alpha+\beta<2\Big\}\cup \{(2,0)\},
\ea
see Fig.\ \ref{f:r}.
\begin{figure}
\begin{center}
\input{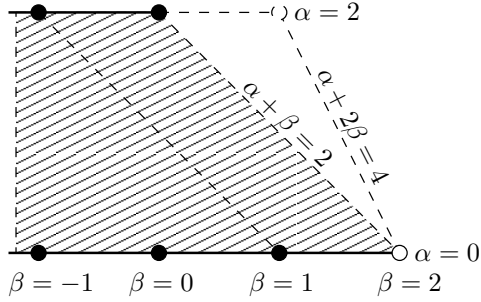}
\end{center}
\caption{The set of parameters $(\alpha,\beta)\in \Xi_{\mathrm{regular}}$ corresponding to the regular case, see \eqref{e:Xir} \label{f:r}}
\end{figure}
This name and the main idea of the  proof are explained in Section \ref{s41} below.

\subsection{Outline} \label{s41}

Let us apply, yet just formally, the It\^o formula to the function $F(v)=\frac{1}{2-\beta}|v|^{2-\beta}\sgn v$ and the process
$V^\e$ given by \eqref{e:2bis}:
\ba
\label{Ito_formal}
F(V_t^\eps)&=F(v_0)-\frac{1}{\eps}\int_0^t V_{s-}^\eps\,\di s
+M_t^\eps
+\int_0^t H^\eps (V_{s}^\eps)\, \di s,
\ea
where
\be
\label{Meps}
M^\eps_t=\int_0^t\int_{\bR} \Big(F(V_{s-}^\eps+z)- F(V_{s-}^\eps)\Big)\wt N^\eps(\di z\, \di s),
\ee
\be\label{Heps}
H^\eps(v)=
\int_0^\infty \Big(F(v+z)+  F(v-z)   -   2F(v)\Big)\, \mu^\eps(\di z).
\ee
Then
\be\label{corrected}
X^\eps_t+F(V_t^\eps)= x_0+\frac{1}{\eps}\int_0^t V_{s-}^\eps\,\di s+F(V_t^\eps)=x_0+F(v_0)+M^\eps_t+\int_0^t H^\eps (V_{s}^\eps)\, \di s,
\ee
By Theorem \ref{t:V} we have
\ban
F(V_t^\eps)\to 0, \quad \eps\to 0
\ean
in probability, and by \eqref{V_null_int} one can expect to have
\be
\label{M_conv}
M^\eps_t-\mathbf{M}_t^\eps\to 0, \quad \eps\to 0
\ee
in probability; here and below we denote
\ban
\mathbf{M}_t^\eps:=\int_0^t\int_{\bR} \Big(F(0+z)- F(0)\Big)\wt N^\eps(\di z\,\di s)=\int_0^t\int_{\bR} F(z)\wt N^\eps(\di z\, \di s).
\ean
It is easy to show that
\be
\label{Mlim}
\mathbf{M}^\eps_\cdot \stackrel{\text{f.d.d.}}{\to}
\frac{1}{2-\beta}\int_0^\cdot \int |z|^{2-\beta}\sgn z\, \widetilde N(\di z\,\di s), \quad \eps\to 0.
\ee
Hence, to prove the required statement, it will be enough to show that
\be\label{H_conv}
\int_0^t H^\eps (V_{s}^\eps)\, \di s\to 0, \quad \eps\to 0.
\ee

We note that, up to a certain point, this argument follows the strategy, frequently used in limit theorems,
based on the use of a \emph{correction term}. In one of its standard forms, which dates back to  \cite{Gordin}
(see also \cite{GordinLif}), the correction term approach assumes that one adds to the process an asymptotically negligible term,
which transforms it into a martingale. In our framework, the classical correction term would have the form $F^\eps(V^\eps_t)$, where
$F^\eps$ is the solution to the \emph{Poisson equation}
\ban
L^\eps F^\eps(v)=-v,
\ean
where
\ban
L^\eps f(v)=- |v|^\beta\sgn v \cdot f'(v)+\eps \int_{\bR}\Big(f(v+z)-f(v)-f'(v)z\Big)\mu^\eps(\di z)
\ean
is the generator of the velocity process $v^\eps$ at the ``microscopic time scale''.
Since we are not able  to specify the solution $F^\eps$ to the Poisson equation, we use instead the function $F$,
which in this context is just the solution to equation
\ban
L^0 F(v)=-v, \quad L^0 f(v)=- |v|^\beta\sgn v  \cdot f'(v).
\ean
Hence $F$ can be understood as an approximate solution to the Poisson equation, and thus we call the
entire argument the  \emph{approximate correction term}  approach. Note that the non-martingale term
\ban
\int_0^t H^\eps (V_{s}^\eps)\, \di s,
\ean
appears in \eqref{corrected} exactly because the exact solution to the Poisson equation is replaced by an
approximate one. In what follows we will show that such an approximation is precise enough, and this integral term is negligible.

Of course, this is just an outline of the argument, and we have to take care about numerous technicalities.
For $\beta< 0$ or $\beta=1$, the function $F$ belongs to $C^2(\bR,\bR)$ and thus \eqref{Ito_formal}
follows by the usual It\^o formula. Otherwise, we yet have to justify this relation, e.g.\
by an approximation procedure. We are actually able to do that when
$(\alpha, \beta)\in \Xi_{\mathrm{regular}}$; see Lemma \ref{lIto} in Appendix. Note that this is exactly the case,
where the functions $H^\eps$ can be proved to be equicontinuous at the point $v=0$,
see Lemma \ref{l:H}. Otherwise, the functions $H^\eps$ are typically discontinuous, or even unbounded near the origin (see Fig.~\ref{f:H})
which makes the entire approach hardly applicable.
\begin{figure}
\begin{center}
\input{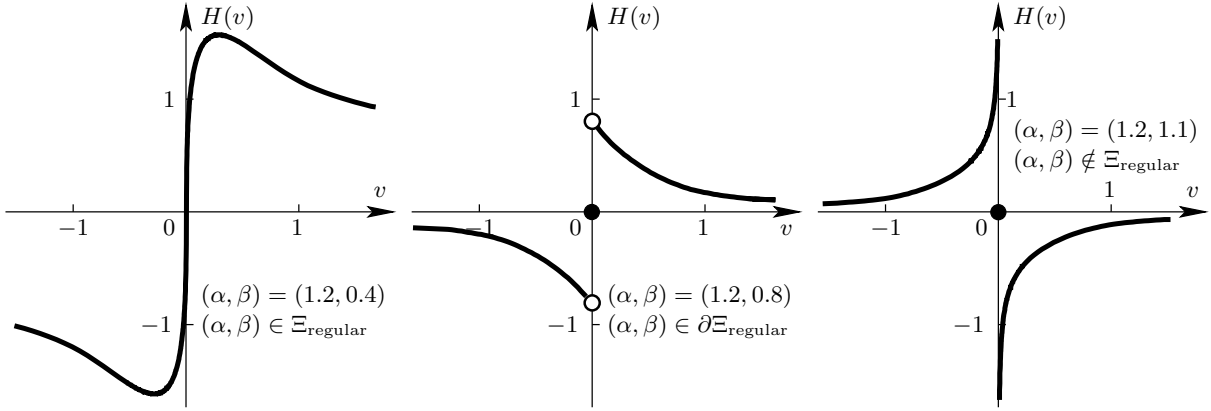}
\end{center}
\caption{The functions $H^\e=H$ defined in \eqref{Heps} corresponding to the damped symmetric $\alpha$-stable process with the L\'evy measure
$\mu^\e(\di z)=\mu(\di z)=|z|^{-\alpha-1}\mathbb I_{(0,1]}(|z|)\,\di z$ for $\alpha=1.2$ and $\beta=0.4$, $\beta=0.8$ and $\beta=1.1$.
\label{f:H}}
\end{figure}

To summarize: when $(\alpha, \beta)\in \Xi_{\mathrm{regular}}$, the function $F$ is regular enough
to allow the It\^o formula to be applied, and the family $\{H^\eps\}$ is equicontinuous at $v=0$,
which makes it possible to derive \eqref{H_conv} from the convergence $V^\e_t\to 0$, $\e\to 0$. This is why we call this case \emph{regular}.

\subsection{Detailed proof}
We will use the same ``truncation of large jumps'' argument which now has the following form:
if we can prove \eqref{Pconv} under the additional assumption \eqref{truncation}, then we actually have \eqref{Pconv}
in the general setting. Hence, in what follows we assume \eqref{truncation} to hold true for some $A>0$.

To clarify the exposition, we postpone the proof of some technicalities to Appendix \ref{ApB}. Namely, in Lemma \ref{lIto} we show
that the It\^o formula \eqref{Ito_formal} holds true indeed for the function $F$.
In Lemma \ref{l:H}, we show that the family $\{H^\eps\}_{\eps\in(0,1]}$ is uniformly bounded on bounded sets, and that
$\lim_{v\to 0}\sup_{\e\in(0,1]} H^\e(v)=0$.
By \eqref{213} and \eqref{V_null_int}, this means that  \eqref{H_conv} holds true in probability and hence
the integral term in \eqref{corrected} is negligible. Here, we focus on the convergence of martingales \eqref{M_conv}.

First, we observe that, because of the principal assumption $\alpha+2\beta<4$ and the truncation assumption \eqref{truncation} with the help of
 \eqref{e:T}
we estimate
\ban
\int_{\bR}\big(F(z)\big)^2\mu^\eps(\di z)=\frac{2}{(2-\beta)^2}\int_0^A z^{4-2\beta}\mu^\eps(\di z)=\frac{2(4-2\beta)}{(2-\beta)^2}\int_0^A z^{3-2\beta}\mu^\eps([z, A])\, \di z\leq C A^{4-\alpha-2\beta},
\ean
that is, $\mathbf{M}^\eps$ is a square integrable martingale. Denote for $\delta>0$ and $R>0$
\ban
\tau_R^\eps&=\inf\{t\colon|V^\eps_t|>R\},\\
M^{\eps, \delta}_t&=\int_0^t\int_{|z|> \delta} \Big(F(V_{s-}^\eps+z)- F(V_{s-}^\eps)\Big)\wt N^\eps(\di z\,\di s)\quad \text{and}\quad
\mathbf{M}_t^{\eps, \delta}=\int_0^t\int_{|z|> \delta} F(z)\wt N^\eps(\di z\,\di s).
\ean
Since $F$ is continuous, we have by \eqref{V_null_int} and the dominated convergence theorem,
\ban
\E \Big(M^{\eps, \delta}_{t\wedge \tau^\eps_R}-\mathbf{M}^{\eps, \delta}_{t\wedge \tau^\eps_R}\Big)^2\leq \E \int_0^t\int_{|z|> \delta}
\Big(F(V_{s-}^\eps+z)- F(V_{s-}^\eps)-F(z)\Big)^2\bI_{|V_{s-}^\eps|\leq R}\,  \mu^\eps(\di z)\di s\to 0, \quad \eps\to 0.
\ean
By \eqref{213},
\be\label{tau_R}
\sup_{\eps\in(0,1]}\P(\tau^\eps_R<t)\to 0, \quad R\to \infty.
\ee
Hence the above estimate provides that for each $\delta>0$
\be
\label{M_conv_large}
M^{\eps, \delta}_t-\mathbf{M}_t^{\eps, \delta}\to 0, \quad \eps\to 0
\ee
in probability.

Next, we have
\be
\label{bfM_conv_small}
\sup_{\eps\in(0,1]}
\E\Big(\mathbf{M}_t^{\eps}-\mathbf{M}_t^{\eps, \delta}\Big)^2= t\sup_{\eps\in(0,1]}\int_{|z|\leq \delta}\big(F(z)\big)^2\mu^\eps(\di z)\leq C\delta^{4-\alpha-2\beta}\to 0, \quad \delta\to 0.
\ee
If $\beta\in [1,2)$, the function $F$ is H\"older continuous with the index $2-\beta$, and for $M^\eps$ we have essentially the same estimate:
\ban
\sup_{\eps\in(0,1]}\E\Big({M}_t^{\eps}-{M}_t^{\eps, \delta}\Big)^2\leq  Ct\sup_{\eps\in(0,1]}\int_{|z|\leq \delta}|z|^{4-2\beta}\mu^\eps(\di z)
\leq C\delta^{4-\alpha-2\beta}\to 0, \quad \delta\to 0.
\ean
If $\beta<1$, the function $F$ has a locally bounded derivative, which gives for arbitrary $R$
\ban
\sup_{\eps\in(0,1]}\E\Big({M}_{t\wedge \tau^\eps_R}^{\eps}-{M}_{t\wedge \tau^\eps_R}^{\eps, \delta}\Big)^2
\leq tC_R\sup_{\eps\in(0,1]}\int_{|z|\leq \delta}z^2\mu^\eps(\di z)\to 0, \quad \delta\to 0.
\ean
In both these cases, we have for arbitrary $c>0$
\be
\label{M_conv_small}
\sup_{\eps\in(0,1]}\P\Big(\big|{M}_t^{\eps}-{M}_t^{\eps, \delta}\big|>c\Big)\to 0, \quad \delta\to 0.
\ee
Combining
\eqref{M_conv_large}, \eqref{bfM_conv_small}, and \eqref{M_conv_small}, we complete the proof of \eqref{M_conv}.

Each $\mathbf{M}^{\eps}$ is a L\'evy process. Since  \eqref{mu_conv} and \eqref{M_conv} hold true and $F$ is continuous,
we have
for any $t\geq 0$ and $\lambda\in\bR$
\ban
\E \ex^{\i \lambda \mathbf{M}^{\eps}_t}
=\exp\Big(t\int(\ex^{\i \lambda F(z)}-1)\,\mu^\eps(\di z)\Big)\to \exp\Big(t\int(\ex^{\i \lambda F(z)}-1)\,\mu(\di z)\Big),\quad \eps\to 0,
\ean
which gives \eqref{Mlim}. This completes the proof of the Theorem.

\begin{rem}
\label{rem:M}
In the proof of \eqref{M_conv} and \eqref{Mlim}, we have not used the regularity assumption
$(\alpha, \beta)\in \Xi_{\mathrm{regular}}$ and proved these relations under the principal assumption $\alpha+2\beta<4$ combined with the auxiliary truncation assumption \eqref{truncation}.
\end{rem}

\section{Proof of Theorem \ref{t:main}: non-regular/quasi-ergodic case}\label{s5}
\subsection{Outline} In this section, we prove  Theorem \ref{t:main}, assuming
\ban
(\alpha, \beta)\not \in \Xi_{\mathrm{regular}}.
\ean
Combined with the principal assumption $\alpha+2\beta<4$, this yields
\ban
\alpha>0, \quad \beta>0,
\ean
see Fig.~\ref{f:nreg}
\begin{figure}
\begin{center}
\input{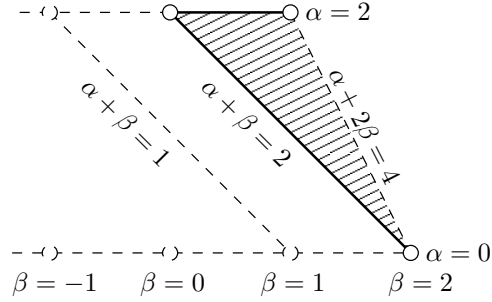}
\end{center}
\caption{The domain of parameters $(\alpha,\beta)$ corresponding to the non-regular/quasi-ergodic case.\label{f:nreg}}
\end{figure}

We call this case \emph{non-regular} and \emph{quasi-ergodic}. Let us explain the latter name and outline the  proof. We make the  change of variables
\ban
Y^\e_t=\e^{-\gamma}V^\e_{t \e^{\alpha\gamma}},\quad \gamma=\frac{1}{\alpha+\beta-1}>0,
\ean
so that the new process $Y^\e$ satisfies the SDE
\be
\label{e:w}
Y^\e_t=Y_0^\e-\int_0^t |Y^\e_s|^\beta\sgn Y^\e_s\,\di s + U^\e_t
\ee
with a L\'evy process
\be
\label{new Levy}
U^\e_t=\e^{-\gamma}Z^\e_{t \e^{\alpha\gamma}},
\ee
with a symmetric jump measure $\nu^\e$.
Such a space-time rescaling transforms the equation for the velocity in the original system \eqref{e:2bis} to a similar one,
but without the term $1/\eps$. In terms of $Y^\e$, the expression for $X^\e$
takes the form
\be
\label{X_w}
X^\e_t=\frac{1}{\e}\int_0^t V^\e_s\,\di s= \e^{(2-\beta)\gamma}\int_0^{t\e^{-\alpha\gamma}} Y^\e_s\,\di s.
\ee
In the particularly important case where $Z^\eps=Z$ and $Z$ is symmetric $\alpha$-stable, each process $U^\e$ has the same
law as $Z$, and thus the law of the solution to \eqref{e:w} does not depend on $\eps$.
The corresponding Markov processes $Y^\e$ are also equal in law and ergodic for $\alpha+\beta>1$, see \cite[Section 3.4]{Kulik17}. Hence one can expect
the limit behaviour of the re-scaled integral functional \eqref{X_w} to be well controllable.
We confirm this conjecture in the general (not necessarily $\alpha$-stable) case,
which we call \emph{quasi-ergodic} because, instead of \emph{one} ergodic process $Y$ we have to
consider \emph{a family} of processes $\{Y^\eps\}$, which, however, possesses a certain uniform stabilization
property as $t\to \infty$ thanks to  dissipativity of the drift coefficient in \eqref{e:w}.

To study the limit behaviour of $X^\e$, we will follow the \emph{approximate corrector term} approach, similar to the one used
in Section \ref{s4}.  On this way, we meet two new difficulties. The first one is minor and technical: since we assume
$(\alpha, \beta)\not \in \Xi_{\mathrm{regular}}$,
we are not able to apply the It\^o formula to the function $F$, see Fig.\ \ref{f:H}. Consequently we consider a mollified function
\ban
\wh F=F+\bar F,
\ean
where $\bar F$ is an odd continuous function, vanishing outside of $[-1,1]$, and  such that $\widehat F\in  C^3(\bR,\bR)$.
Now the It\^o formula is applicable:
\ban
\wh F(Y^\e_t)=\wh F(Y^\e_0) - \int_0^t \wh F(Y^\e_s)|Y^\e_s|^\beta\sgn Y^\e_s \,\di s+
m_t^\eps+  \int_0^t J^\e(Y^\e_s)\,\di s,
\ean
where
\ban
m_t^\eps&=\int_0^t\int_\bR (\wh F(Y^\e_{s-}+u)-\wh F(Y^\e_{s-}))\, \wt n^\e(\di u\,\di s),\\
J^\e(y)&=\int_0^\infty (\wh F(y+u)+ \wh F(y-u)- 2\wh F(y) )\, \nu^\e(\di u),
\ean
see the notation in Section \ref{s52} below. This gives
\be
\label{corrected_scaled}
X^\e_t+ \e^{(2-\beta)\gamma}\wh F(Y^\e_{t\e^{-\alpha\gamma}})   =x_0+\e^{(2-\beta)\gamma}\wh F(Y_0^\eps)+ \e^{(2-\beta)\gamma} m^\e_{t\e^{-\alpha\gamma}}+\e^{(2-\beta)\gamma}\int_0^{t\e^{-\alpha\gamma}} R^\e(Y^\e_s)\,\di s,
\ee
where
\ba
\label{e:R}
R^\e(y)=-\wh F'(y)|y|^\beta\sgn y+y+J^\e(y)=- \bar F'(y)|y|^\beta\sgn y+J^\e(y).
\ea
This representation is close to \eqref{corrected}.  This relation becomes even more visible, when one observes that
\ban
\e^{(2-\beta)\gamma} F(Y^\e_{t\e^{-\alpha\gamma}})=F(V^\eps_t).
\ean
Then \eqref{corrected_scaled} can be written as
\ba
\label{corrected_scaled2}
X^\e_t+ F(V^\eps_t) =x_0+F(v_0)&+ M^\eps_t+\e^{(2-\beta)\gamma}\int_0^{t\e^{-\alpha\gamma}} R^\e(Y^\e_s)\,\di s\\
&-\e^{(2-\beta)\gamma}\bar F(Y^\e_{t\e^{-\alpha\gamma}})+\e^{(2-\beta)\gamma}\bar F(Y_0^\eps)+ \e^{(2-\beta)\gamma} \bar m^\e_{t\e^{-\alpha\gamma}}
\ea
with
\ban
\bar m_t^\eps=\int_0^t\int_\bR (\bar F(Y^\e_{s-}+u)-\bar F(Y^\e_{s-}))\, \wt n^\e(\di u\,\di s).
\ean
Since $\bar F$ is bounded and $\beta<2$, the terms $\e^{(2-\beta)\gamma}\bar F(Y^\e_{t\e^{-\alpha\gamma}})$ and
$\e^{(2-\beta)\gamma}\ov F(Y^\e_{0})$ are obviously negligible. Also, it will be
not difficult to show that the last term in \eqref{corrected_scaled2} is negligible, as well:
\be
\label{ov_m}
\e^{(2-\beta)\gamma} \bar m^\e_{t\e^{-\alpha\gamma}}\to 0, \quad \e\to 0,
\ee
in probability. Recall that we have \eqref{M_conv} and \eqref{Mlim}, see Remark \ref{rem:M}. Eventually, to establish \eqref{Pconv},
it is enough to show that
\be
\label{R}
\e^{(2-\beta)\gamma}\int_0^{t\e^{-\alpha\gamma}} R^\e(Y^\e_s)\,\di s\to 0, \quad \eps\to 0,
\ee
in probability. The second, more significant, difficulty  which we encounter now is that this relation
cannot be obtained in the same way we did that in Section \ref{s4}. We can transform it, in order to make visible that it is similar to \eqref{H_conv}:
\begin{align}
\notag
\e^{(2-\beta)\gamma}&\int_0^{t\e^{-\alpha\gamma}} R^\e(Y^\e_s)\,\di s=\e^{(2-\alpha-\beta)\gamma}\int_0^{t} R^\e(Y^\e_{\e^{-\alpha\gamma}s})\,\di s
=\int_0^t \wh H^\eps (V_{s}^\eps)\, \di s,\\
\label{e:hatH}
\wh H^\eps(v)&=\e^{(2-\alpha-\beta)\gamma}R^\e(\e^{-\gamma} v).
\end{align}
We are now not in the regular case, $(\alpha, \beta)\not \in \Xi_{\mathrm{regular}}$, and thus the family $\{H^\eps\}_{\e\in(0,1]}$ is typically
unbounded in the neighbourhood of the point $v=0$, see Fig.\ \ref{f:H}.
We have for each $\delta>0$
\be
\label{H-H}
\sup_{|v|>\delta}|\wh H^\eps(v)-H^\eps(v)|\to 0, \quad \eps\to 0,
\ee
the proof is postponed to Appendix \ref{sAH}. Thus the family $\{\wh H^\eps\}_{\e\in(0,1]}$
is unbounded, and one can hardly derive \eqref{R} from \eqref{V_null_int}, like we did that in Section \ref{s4}.
Instead, we will prove \eqref{R} using the stabilization properties of the family $\{Y^\eps\}$.

\subsection{Preliminaries to the proof\label{s52}}

In what follows we assume \eqref{truncation} to hold true, i.e.\ the jumps of the processes $Z^\e$ are bounded by some $A>0$.
Using the ``truncation of large jumps'' trick from  the previous section, we guarantee that this assumption does
not restrict the generality. We denote by $\nu^\eps$ the L\'evy measure of the L\'evy process $U^\eps$ introduced in \eqref{new Levy},
and by $n^\eps$ and $\wt n^\eps$ the corresponding Poisson and compensated Poisson random measures. More precisely, for $B\in \rB(\bR)$ and $s\geq 0$
\ban
\nu^\eps(B)&:=\eps^{\alpha\gamma}\mu( z\colon \eps^{-\gamma} z\in B ), \\
n^\eps(B\times [0,s])&:=N^\eps\Big( (z,t)\colon (\eps^{-\gamma} z,\eps^{\alpha\gamma}t)\in B\times [0,s]  \Big),\\
\wt n^\eps(\di u\, \di s)&:=n^\eps(\di u\, \di s)-\nu^\eps(\di u)\, \di s.
\ean
Each of the measures $\nu^\eps$ is symmetric, and
\ban
\nu^\eps(u\colon|u|>r)=\eps^{\alpha\gamma}\mu^\eps(z\colon |z|>\eps^\gamma r), \quad r>0.
\ean
Hence we have the following analogue of \eqref{BG}:
\ba
\label{BGnu}
\sup_{\e\in(0,1]} \nu^\e(u\colon |u|\geq r)\leq C r^{-\alpha},\quad r>0,
\ea
see also \eqref{e:T}. In addition, we have
\be
\label{truncation_nu}
\operatorname{supp}\nu^\e\subseteq [-A\eps^{-\gamma},A\eps^{-\gamma}]
\ee
by the assumption \eqref{truncation}, and
\be
\label{unif_levy_nu}
\sup_{\eps\in(0,1]}\int_{\bR} (u^2\wedge 1)\, \nu^\eps(\di u)<\infty.
\ee
The latter inequality follows directly from \eqref{BGnu} for $\alpha<2$. For $\alpha=2$, one should also use \eqref{BG0}, which gives
\ban
\int_{|u|\leq 1} u^2\nu^\eps(\di u)=\int_{|z|\leq \e^\gamma} u^2\mu^\eps(\di z)\to 0, \quad \eps\to 0.
\ean
Using these relations, it is easy to derive \eqref{ov_m}. Since $\wh F\in C^3(\bR,\bR)$ and $\bar F=\wh F-F$
is compactly supported, $\bar F$ is $(2-\beta)$-H\"older continuous for $\beta\geq 1$ and is
Lipschitz continuous if $\beta<1$. In addition, $\bar F$ is bounded, which gives
\ban
\E\Big(\e^{(2-\beta)\gamma} \bar m^\e_{t\e^{-\alpha\gamma}}\Big)^2\leq
C\eps^{4-2\beta-\alpha}\int_{\bR} (|u|^{4-2\beta}\wedge 1)\, \nu^\eps(\di u)
\ean
if $\beta\geq 1$, and
\ban
\E\Big(\e^{(2-\beta)\gamma} \bar m^\e_{t\e^{-\alpha\gamma}}\Big)^2\leq C\eps^{4-2\beta-\alpha}\int_{\bR}  (u^{2}\wedge 1 )\, \nu^\eps(\di u)
\ean
if $\beta<1$. In the latter case, \eqref{ov_m} follows by \eqref{unif_levy_nu} and the basic
assumption $\alpha+2\beta<4$. For $\beta\geq 1$, we have  \eqref{ov_m} by
\ban
\sup_{\eps\in(0,1]}\int_{\bR} (|u|^{4-2\beta}\wedge 1)\, \nu^\eps(\di u)<\infty,
\ean
which follows from \eqref{BGnu}.

Let us explain the strategy of the proof of \eqref{R}. The process $Y^\e$ being a solution to \eqref{e:w} is a Markov process.
Let us denote by $\P^{Y, \eps}_y$ its law of this process with $Y^\eps_0=y$, and by $\E^{Y, \eps}_y$ the corresponding expectation. Then
\ban
\E \Big(\e^{(2-\beta)\gamma}\int_0^{t\e^{-\alpha\gamma}} R^\e(Y^\e_s)\,\di s\Big)^2
=2\e^{(4-2\beta)\gamma}\E\int_0^{t\e^{-\alpha\gamma}}  \Big(R^\e(Y^\e_s) \cdot
\E_{Y_s^\e}^{Y, \e} \int_0^{t\e^{-\alpha\gamma}-s} R^\e(Y^\e_r)\,\di r\Big)\,\di s.
\ean
Our aim will be to construct a non-negative function $Q$ such that, for some $c, C>0$,
\begin{itemize}
  \item for all $y\in \bR$, $t>0$, and $\eps>0$
\be
\label{principal_bound}
R^\e(y) \E_{y}^{Y, \e} \int_0^{t} R^\e(Y^\e_s)\,\di s\leq c Q(y);
\ee
  \item for all $t>0$ and $\eps>0$
\be
\label{Cesaro}
\E\int_0^{t} Q(Y^\e_s)\,\di s\leq c\cdot C\Big(1+t+|Y^\eps_0|^\alpha\Big).
\ee
\end{itemize}
Since $Y_0^\eps=\eps^{-\gamma}v_0$, this will provide \eqref{R} since
\ban
\E \Big(\e^{(2-\beta)\gamma}\int_0^{t\e^{-\alpha\gamma}} R^\e(Y^\e_s)\,\di s\Big)^2
\leq C\e^{(4-2\beta)\gamma}\Big(1+t\e^{-\alpha\gamma}+|v_0|^\alpha\e^{-\alpha\gamma}\Big)\to 0
\ean
by the principal assumption $\alpha+2\beta<4$.

The inequality \eqref{Cesaro} can be obtained in quite a standard way,
based on a proper Lyapunov-type condition, see e.g.\ Section 2.8.2 and Section 3.2 in \cite{Kulik17}.
For the reader's convenience, we explain how this simple, but important argument can be applied in the current setting.
Denote for $G\in C^2(\bR,\bR)$
\be
\label{A_eps}
\rA^\eps G(y)=- |y|^\beta\sgn v \cdot G'(y)+ \int_{0}^\infty\Big(G(y+u)+G(y-u)-2G(y)\Big)\nu^\eps(\di u).
\ee
\begin{lem}
\label{lLyap}
Let a non-negative $G\in C^2(\bR,\bR)$ be such that for some $c_1$, $c_2>0$
\be
\label{Lyapunov}
\rA^\eps G(y)\leq -c_1Q(y)+c_2, \quad \eps>0.
\ee
Then for all  $t\geq 0$ and $\eps>0$
\ban
\E\int_0^{t} Q(Y^\e_s)\,\di s\leq \frac{1}{c_1} G(Y_0^\eps)+\frac{c_2}{c_1}t.
\ean
\end{lem}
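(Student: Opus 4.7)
The proof is a standard Dynkin/Lyapunov argument applied to the Markov process $Y^\eps$ whose extended generator coincides with $\rA^\eps$ on smooth enough functions. The plan is as follows.

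First, I would apply the It\^o formula to $G(Y^\eps_t)$, using the SDE \eqref{e:w} for $Y^\eps$ together with the fact that $G\in C^2(\bR,\bR)$ and that $U^\eps$ has only finitely many large jumps on compact intervals (the It\^o formula for jump processes is directly applicable; the symmetry of $\nu^\eps$ converts the first-order compensator term into the symmetric second-difference integral appearing in \eqref{A_eps}). The result is
\ban
G(Y^\eps_t)=G(Y_0^\eps)+\int_0^t \rA^\eps G(Y^\eps_s)\,\di s+\mathcal{M}^\eps_t,
\ean
where
\ban
\mathcal{M}^\eps_t=\int_0^t\int_\bR \bigl(G(Y^\eps_{s-}+u)-G(Y^\eps_{s-})\bigr)\,\wt n^\eps(\di u\, \di s)
\ean
is a local martingale.

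Next, I would introduce a localizing sequence of stopping times $\sigma_m=\inf\{t\geq 0\colon |Y^\eps_t|\geq m\}$, so that $\mathcal{M}^\eps_{\cdot\wedge\sigma_m}$ is a genuine martingale (this uses $G\in C^2$ and \eqref{unif_levy_nu} to control the second moment of the jump integrand on the stopped process). Taking expectations at $t\wedge \sigma_m$ and invoking the Lyapunov condition \eqref{Lyapunov} gives
\ban
\E\, G(Y^\eps_{t\wedge \sigma_m})=G(Y_0^\eps)+\E\int_0^{t\wedge \sigma_m}\rA^\eps G(Y^\eps_s)\,\di s\leq G(Y_0^\eps)-c_1\E\int_0^{t\wedge \sigma_m}Q(Y^\eps_s)\,\di s+c_2\, t.
\ean
Using $G\geq 0$ on the left-hand side and rearranging,
\ban
c_1\,\E\int_0^{t\wedge \sigma_m}Q(Y^\eps_s)\,\di s\leq G(Y_0^\eps)+c_2\, t.
\ean

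Finally, I would let $m\to\infty$. Since $Q\geq 0$ (which should be either verified or assumed in the construction of $Q$, and is implicit in the intended application), the monotone convergence theorem applies to the left-hand side, and $\sigma_m\to\infty$ almost surely because non-explosion of $Y^\eps$ follows from the dissipative structure of \eqref{e:w} (this is essentially the estimate already established in Theorem~\ref{t:V}(i) after undoing the scaling). This yields
\ban
\E\int_0^{t}Q(Y^\eps_s)\,\di s\leq \frac{1}{c_1}G(Y_0^\eps)+\frac{c_2}{c_1}t,
\ean
which is the desired bound. The only technical subtlety is the localization/non-explosion step; everything else is a direct application of It\^o's formula and the pointwise Lyapunov inequality.
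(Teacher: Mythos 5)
Your proposal is correct and follows essentially the same route as the paper's proof: apply the Itô formula, localize the resulting local martingale, take expectations, use the Lyapunov inequality together with $G\geq 0$, and pass to the limit (the paper invokes Fatou's lemma where you invoke monotone convergence, but both are legitimate since $Q\geq 0$ and $\sigma_m\nearrow\infty$).
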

\begin{proof} By the It\^o formula,
\ban
G(Y^\eps_t)=\int_0^t \rA^\eps G(Y^\eps_s)\, \di s+ \rM_t^\eps,
\ean
where $\rM^\eps$ is a local martingale. Let $\tau_n^\eps\nearrow\infty$ be a localizing sequence for $\rM^\eps$, then
\ban
\E \int_0^{t\wedge \tau_n^\eps}  Q(Y^\e_s)\,\di s&\leq
\frac{c_2}{c_1}t- \frac{1}{c_1} \E \int_0^{t\wedge \tau_n^\eps}  \rA^\eps G(Y^\eps_s)\,\di s
\\&=\frac{c_2}{c_1}t+ \frac{1}{c_1} \E G(Y^\eps_0)-\frac{1}{c_1} \E G(Y^\eps_{t\wedge \tau_n^\eps})\leq \frac{c_2}{c_1}t+ \frac{1}{c_1}  G(Y^\eps_0).
\ean
We complete the proof passing to the limit $n\to \infty$ and applying the Fatou lemma.
\end{proof}

Now we specify the functions $G$ and $Q$ which we plug into this general statement. Fix
\be
\label{p_cond}
p\in (\beta-1, \alpha+\beta-1),
\ee
recall that $\alpha>0$ and therefore the above interval is non-empty. Let a non-negative $G\in C^2(\bR,\bR)$ be such that
\ba
\label{G}
&G(y)\equiv 0 \text{ in some neighbourhood of $0$},\\
&G(y)\leq |y|^{p+1-\beta},\quad |y|\leq 1,\\
&G(y)= |y|^{p+1-\beta}, \quad |y|> 1.
\ea
Then for $|y|\geq 1$
\be
\label{AG}
\rA^\e G(y)=-(p+1-\beta)|y|^p+K^\eps(y),
\ee
where
\ban
K^\e(y)=\int_0^\infty \Big(G(y+u)+G(y-u)-2G(y)\Big)\,\nu^\e(\di u).
\ean
The function $G$ satisfies the assumptions of Lemma \ref{l:K} with
$\sigma=p+1-\beta$;
note that  assumption \eqref{p_cond} means that $\sigma\in (0, \alpha)$. Since
\ban
\sigma-\alpha=p+1-\alpha-\beta<p,
\ean
we have by Lemma \ref{l:K}
\be
\label{K_neg}
\sup_{\e\in(0,1]}|y|^{-p} K^\e(y)\to 0, \quad y\to \infty.
\ee
In addition, by the same Lemma the family $\{K^\e\}_{\eps\in(0,1]}$ is uniformly bounded
on each bounded set, hence the same property holds true for the family $\{\rA^\e G\}_{\eps\in(0,1]}$.
This provides \eqref{Lyapunov} with $G$ specified above,
\ban
Q(y)=1+|y|^p,
\ean
and properly chosen $c_1$, $c_2$. Eventually by construction we have
\ban
G(y)\leq |y|^{p+1-\beta}\leq C(1+|y|^\alpha),
\ean
therefore \eqref{Cesaro} holds true by Lemma \ref{lLyap}.

By Lemma \ref{lR}, the family $\{R^\eps\}_{\eps\in(0,1]}$ satisfies
\ban
|R^\eps(y)|\leq C(1+|y|)^{2-\alpha-\beta}\ln (2+|y|)
\ean
Hence, to prove the bound \eqref{principal_bound} with $Q$ specified above, it is enough to show that, for some $p'<p$
\be
\label{principal_bound_2}
\Big|\E_{y}^{Y, \e} \int_0^{t} R^\e(Y^\e_s)\,\di s\Big|\leq C(1+|y|)^{p'+\alpha+\beta-2}, \quad t>0, \quad \eps>0.
\ee
In the rest of the proof, we verify this relation for properly chosen $p'$. We fix $y$, and (with a slight abuse of notation)
denote by $Y^\eps$, $Y^{\eps,0}$ the strong solutions to \eqref{e:w} with the same process $U^\eps$ and initial
conditions $Y^\eps_0=y$, $Y^{\eps,0}_0=0$. Recall that
the L\'evy process $U^\eps$ is symmetric. Since the drift coefficient $-|y|^\beta\sgn y$ in \eqref{e:w} is odd,
the law of $Y^{\eps,0}$ is symmetric as well.
By Lemma \ref{lR}, the family  of functions $\{R^\eps\}_{\eps\in(0,1]}$ is bounded: if $\alpha+\beta>2$ this is straightforward,
for $\alpha+\beta=2$ one should recall that in the non-regular case this identity excludes the case $\alpha=2$, see Fig.~\ref{f:nreg}.
It is also easy to verify that functions $R^\eps$ are odd, which gives
\ban
\E  R^\e(Y^{\e,0}_t)=0, \quad t\geq 0, \quad \eps>0.
\ean
Then
\ban
\Big|\E_{y}^{Y, \e} \int_0^{t} R^\e(Y^\e_s)\,\di s\Big|=\Big|\E \int_0^{t} R^\e(Y^\e_s)\,\di s-\E \int_0^{t} R^\e(Y^{\e,0}_s)\,\di s\Big|
\leq \int_0^t\E|R^\e(Y^{\e}_s)-R^\e(Y^{\e,0}_s)|\, \di s.
\ean
This bound will allow us to prove \eqref{principal_bound_2} using the dissipation, brought to the system by the drift coefficient  $-|y|^\beta\sgn y$. In what follows, we consider separately two cases: $\beta\in [1,2)$ (``strong dissipation'') and $\beta\in (0,1)$ (``H\"older dissipation'').

\subsection{Strong dissipation: $\beta\in [1,2)$}

Since the noise in the SDE \eqref{e:w} is additive, the difference
$t\mapsto \Delta_t^\eps=Y_t^\eps-Y_t^{\eps,0}$ is an absolutely continuous function and
\ban
\di \Delta_t^\e= \Big(|Y_t^\eps|^\beta \sgn Y_t^\eps  -  |Y_t^{\eps,0}|^\beta\sgn Y_t^{\eps,0}\Big)\, \di t.
\ean
Since $\Delta\mapsto |\Delta|$ is Lipschitz continuous, $t\mapsto |\Delta_t^\eps|$ is an absolutely continuous function as well with
\ban
\di |\Delta_t^\eps|= -  \Big(|Y_t^\eps|^\beta \sgn Y_t^\eps  -  |Y_t^{\eps,0}|^\beta\sgn Y_t^{\eps,0}\Big)\sgn(Y_t^\eps-Y_t^{\eps,0})\, \di t.
\ean
For $\beta\in(1,2)$ we have the inequality
\ba
\label{diss_>1}
- \Big( |y_1|^\beta\sgn y_1  -  |y_2|^\beta\sgn y_2   \Big) \sgn( y_1-y_2 )\leq - 2^{-\beta} |y_1-y_2|^\beta,\quad y_1, y_2\in \bR.
\ea
To prove \eqref{diss_>1}, it suffice to consider two cases. For $0\leq y_1\leq y_2$ we recall the elementary
inequality $(t-1)^\beta\leq t^\beta-1$ for $t\geq 1$ to get
\ban
2^{-\beta}(y_2-y_1)^\beta\leq
(y_2-y_1)^\beta=y_1^\beta  \Big(\frac{y_2}{y_1}-1\Big)^\beta\leq y_1^\beta  \Big(\frac{y_2^\beta}{y_1^\beta}-1\Big)=y_2^\beta-y_1^\beta.
\ean
For $y_1\leq 0\leq y_2$, we have
\ban
2^{-\beta}(|y_1|+y_2)^\beta=\Big(\frac{|y_1|+y_2}{2}\Big)^\beta\leq \max\{|y_1|^\beta,y_2^\beta\}\leq |y_1|^\beta+y_2^\beta.
\ean
Eventually, with the help of \eqref{diss_>1} we get a differential inequality
\ban
\frac{\di}{\di t} |\Delta_t^\eps|\leq - 2^{-\beta}  |\Delta_t^\eps|^\beta.
\ean
Denote by $\Upsilon$ the solution to the ODE
\ban
\frac{\di}{\di t} \Upsilon_t  =- 2^{-\beta}  \Upsilon_t^\beta, \quad \Upsilon_0=|\Delta_0^\eps|=|y|.
\ean
Then by the comparison theorem \cite[Theorem 1.4.1]{LakLee-69} $|\Delta_t^\eps|\leq \Upsilon_t$, $t\geq 0$.
This solution is explicit:
\ban
\Upsilon_t=\begin{cases}
               |y|\ex^{-2^{-\beta}t}, & \beta=1, \\
               \Big(|y|^{1-\beta}+2^{-\beta}(\beta-1)t\Big)^\frac{1}{1-\beta}, & \beta>1,
             \end{cases}
\ean
and we have
\ban
\int_0^\infty \Upsilon_t\, \di t=\frac{{2^\beta}}{2-\beta}|y|^{2-\beta}.
\ean
By  Lemma \ref{lRder}, derivatives of the functions $R^\eps$ are  uniformly bounded, which gives for some $C>0$
\ban
\Big|\E_{y}^{Y, \e} \int_0^{t} R^\e(Y^\e_s)\,\di s\Big|\leq C\int_0^t\E|\Delta_s^\eps|\, \di s\leq C\int_0^t\Upsilon_s\, \di s\leq C|y|^{2-\beta}.
\ean
Eventually we obtain \eqref{principal_bound_2} with
\ban
p'=4-\alpha-2\beta>0.
\ean

If $\beta\in (1,2)$, we have
\ban
p'-\alpha=2(2-\alpha-\beta)\leq 0<\beta-1,
\ean
that is,
\ban
p'<\alpha+\beta-1.
\ean
Then we can take $p\in (p', \alpha+\beta-1)$ and get that, for $Q(y)=1+|y|^p$,
both \eqref{principal_bound} and \eqref{Cesaro} hold true which provides \eqref{R} and completes the entire proof.

For $\beta=1$, the same argument applies with just a minor modification. Namely, since the functions $\{R^\e\}_{\e\in(0,1]}$
are uniformly bounded and have uniformly bounded derivatives,  for each $\kappa\in (0,1)$ these functions are uniformly $\kappa$-H\"older equicontinuous:
\be
\label{Holder}
|R^\eps(y_1)-R^\eps(y_2)|\leq C|y_1-y_2|^\kappa, \quad y_1, y_2\in \bR, \quad \eps>0.
\ee
Hence
\ban
\Big|\E_{y}^{Y, \e} \int_0^{t} R^\e(Y^\e_s)\,\di s\Big|
\leq C\int_0^t\E|\Delta_s^\eps|^\kappa\, \di s\leq C\int_0^t\Upsilon_s^\kappa\, \di s\leq C|y|^{\kappa}.
\ean
That is, we have \eqref{principal_bound_2} with
\ban
p'=\kappa+1-\alpha.
\ean
Note that for $\beta=1$ the principal assumption $\alpha+2\beta<4$ yields $\alpha<2$, hence
$p'$ can be made positive by taking $\kappa<1$ close enough to 1. On the other hand, we are
considering the non-regular case now, hence $\alpha\geq 2-\beta=1$. That is,
\ban
p'\leq \kappa<1\leq \alpha+\beta-1.
\ean
Again, we can take $p\in (p', \alpha+\beta-1)$ and get that, for $Q(y)=1+|y|^p$,
both \eqref{principal_bound} and \eqref{Cesaro} hold true, which provides \eqref{R} and completes the entire proof.

\subsection{H\"older dissipation: $\beta\in(0,1)$}

Now the situation is more subtle because, instead of \eqref{diss_>1}, which holds true on the entire $\bR$,
we have only a family of local inequalities:
for each $D>0$ there exists $c_D>0$ such that
\ba
\label{diss_<1}
-(|y_1|^\beta\sgn y_1-|y_2|^\beta\sgn y_2)\sgn(y_1-y_2) \leq -c_D|y_1-y_2|^\beta\wedge |y_1-y_2|\cdot \bI_{|y_2|\leq D},\quad y_1, y_2\in\bR.
\ea
Indeed, first we observe that the inequality holds for $y_2=0$ and any $0<c_D\leq 1$, and for $|y_2|>D$ for any $c_D>0$.
It is sufficient to consider $0< y_2\leq D$. We have
\ban
|y_2-y_1|^\beta & \leq (|y_1|+y_2)^\beta\leq |y_1|^\beta+y_2^\beta ,\quad y_1\in(-\infty,0],\\
y_2^\beta-y_1^\beta & \geq  \beta y_2^{\beta-1} (y_2-y_1)   \geq   \beta D^{\beta-1} (y_2-y_1)  ,\quad y_1\in[0,y_2],\\
y_1^\beta-y_2^\beta&\geq (2^\beta-1)y_2^{\beta-1}  (y_1-y_2) \geq (2^\beta-1)D^{\beta-1}  (y_1-y_2),\quad y_1\in [y_2,2y_2],\\
(y_1-y_2)^\beta&= y_2^\beta\Big(\frac{y_1}{y_2}-1\Big)^\beta\leq \frac{1}{2^\beta-1}(y_1^\beta-y_2^\beta),\quad y_1\in[2y_2,\infty),
\ean
so that the inequality \eqref{diss_<1} holds true for $c_D=\beta D^{\beta-1}\wedge 1$.

We will prove \eqref{principal_bound_2} in two steps, considering separately the cases $|y|\leq D$ and $|y|>D$ for some fixed $D$.
In both these cases, we will require the following recurrence bound. Denote
\ban
\theta_D^\eps=\inf\{t\geq 0\colon |Y_t^\e|\leq D\}.
\ean

\begin{lem}
\label{l52}
Let $p\in (0,\alpha+\beta-1)$ be fixed. Then there exist $D>1$ large enough and a constant $C>0$ such that
\ba
\label{recur}
\E_y^{Y, \eps} \big(\theta_D^\eps\big)^\frac{p+1-\beta}{1-\beta}\leq C|y|^{p+1-\beta},\quad y\in\bR.
\ea

\end{lem}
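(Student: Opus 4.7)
The inequality is trivial for $|y|\leq D$, since then $\theta_D^\eps=0$, so throughout I take $|y|>D$. By combining \eqref{AG} with the uniform asymptotics \eqref{K_neg} for $K^\eps$, the Lyapunov function $G$ constructed just above the lemma satisfies, for $D>1$ large enough,
\ban
\rA^\eps G(y)\leq -\tfrac{1}{2}(p+1-\beta)|y|^p,\quad |y|\geq D,
\ean
uniformly in $\eps\in(0,1]$. Since $G(y)=|y|^{p+1-\beta}$ for $|y|\geq 1$, this can be rewritten as a sub-geometric drift condition
\ban
\rA^\eps G(y)\leq -\kappa\,G(y)^\gamma,\quad |y|\geq D,
\ean
where $\gamma:=p/(p+1-\beta)\in(0,1)$ (using $p>0$ and $\beta<1$), with $\kappa>0$ and $D$ chosen independently of $\eps$.

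From this drift inequality I want to conclude the moment estimate $\E_y^{Y,\eps}\theta_D^q\leq C G(y)$ with $q:=1/(1-\gamma)=(p+1-\beta)/(1-\beta)$; together with $G(y)=|y|^{p+1-\beta}$ this is precisely \eqref{recur}. The exponent $q$ is dictated by the deterministic comparison: the ODE $\dot v=-\kappa v^\gamma$ starting from $v_0=G(y)$ hits $0$ at time $T(y)=G(y)^{1-\gamma}/[\kappa(1-\gamma)]$, whence $T(y)^q$ is proportional to $G(y)$.

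The main obstacle is turning this heuristic into a rigorous stochastic bound \emph{uniform in} $\eps$. A convenient first step is to exploit the concavity of $v\mapsto v^{1-\gamma}$: applying the tangent-line inequality term by term to the symmetric jump integral in $\rA^\eps$ and combining with the drift part via the chain rule, one obtains
\ban
\rA^\eps G^{1-\gamma}(y)\leq (1-\gamma)\,G(y)^{-\gamma}\,\rA^\eps G(y)\leq -(1-\gamma)\kappa,\quad |y|\geq D,
\ean
which by Dynkin's formula gives the first-moment bound $\E_y^{Y,\eps}\theta_D\leq [(1-\gamma)\kappa]^{-1}\,G(y)^{1-\gamma}$, the case $q=1$ of \eqref{recur}. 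To upgrade this to the $q$-th moment one can either iterate on powers of $G^{1-\gamma}$ while controlling the second-order jump corrections (which, thanks to the principal assumption $\alpha+2\beta<4$ and the Blumenthal--Getoor-type bound \eqref{BGnu}, decay at the rate provided by Lemma~\ref{l:K} as $|y|\to\infty$), or, more cleanly, invoke the abstract moment estimate for sub-geometric drift conditions from \cite[Section 3.2]{Kulik17}: the hypotheses of that theorem are satisfied uniformly in $\eps$ thanks to the uniformity of $\kappa$ and $D$ above, and its conclusion yields \eqref{recur} with $C$ independent of $\eps$.
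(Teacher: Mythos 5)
Your proposal is correct in substance and follows essentially the same route as the paper: the same Lyapunov function $G$, the same sub-geometric drift inequality $\rA^\eps G\leq -cG^{p/(p+1-\beta)}$ on $\{|y|>D\}$ uniform in $\eps$, the same exponent $q=(p+1-\beta)/(1-\beta)$ suggested by the deterministic comparison ODE, and the same key device of a concave transform to make the jump correction non-positive. The only difference worth noting is in how the $q$-th moment is extracted. You first establish the first-moment bound via $\rA^\eps G^{1-\gamma}\leq(1-\gamma)G^{-\gamma}\rA^\eps G\leq -(1-\gamma)\kappa$ (this concavity/tangent-line step is correct) and then propose either an iteration in powers of $G^{1-\gamma}$ or a citation to the abstract moment estimate of \cite[Section 3.2]{Kulik17}. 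The paper instead applies the concavity argument directly to the \emph{time-space} Lyapunov function $H(t,g)=\big(\frac{c}{q}t+g^{1/q}\big)^q$: one checks $H'_t+H'_g\cdot\rA^\eps G\leq 0$ on $\{|y|>D\}$, concavity of $g\mapsto H(t,g)$ kills the jump correction $\Psi^\eps$, so $H(t\wedge\theta^\eps_D,G(Y^\eps_{t\wedge\theta^\eps_D}))$ is a local supermartingale and Fatou gives $\E_y^{Y,\eps}H(\theta^\eps_D,\cdot)\leq G(y)$; since $H(t,g)\geq(ct/q)^q$, the $q$-th moment bound falls out in a single step with the constant manifestly uniform in $\eps$. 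Your option of citing the abstract lemma is acceptable, but you would still need to check that its constants depend only on the drift constants $\kappa,D$ (hence are $\eps$-uniform); your option (a) of iterating is vaguer and would require more care about the second-order jump corrections than you indicate. The direct $H(t,g)$ computation in the paper sidesteps both issues cleanly.
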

\begin{proof}
Since $\beta \in (0,1)$, we have $p>0>\beta-1$. That is, $p$ satisfies \eqref{p_cond}, and
for the function $G$ given by \eqref{G}, 
the equality \eqref{AG} holds true. 
By \eqref{K_neg}, we can fix $D$ large enough and some $c>0$ such that
\ba
\label{lyap}
\rA^\e G(y)\leq  -c\big(G(y)\big)^{p/(p+1-\beta)}, \quad |y|>D.
\ea
Note that, by the It\^o formula,
\ban
G(Y^\eps_t)=G(y)+\int_0^t\rA^\e G(Y^\eps_s)\, \di s+M^{G, \eps}_t,
\ean
where the local martingale $M^{G, \eps}$ is given by
\ban
M^{G, \eps}_t=\int_0^t\int_{\bR}\Big(G(Y^\eps_{s-}+u)-G(Y^\eps_{s-})\Big)\wt n^\eps(\di u\, \di s).
\ean
The rest of the proof is based on the general argument explained in  \cite[Section 4.1.2]{H2010};
see also \cite[Lemma 3.2.4]{Kulik17} and \cite[Lemma 2]{EGZ16}. Denote
$q=(p+1-\beta)/(1-\beta)>1$ and let
\ban
H(t,g)=\left(\frac{c}{q}t+g^{1/q}\right)^q.
\ean
Then
\ban
H'_t(t,g)&=c\Big(\frac{c}{q}t+g^{1/q}\Big)^{q-1}=c\Big(H(t,g)\Big)^{p/(p+1-\beta)},\\
H'_g(t,g)&=g^{-(q-1)/q}\Big(\frac{c}{q}t+g^{1/q}\Big)^{q-1}=g^{-p/(p+1-\beta)}\Big(H(t,g)\Big)^{p/(p+1-\beta)},
\ean
and the function $g\mapsto H(t, g)$ is concave for each $t\geq 0$. Then by the It\^o formula
\ban
H\big(t,G(Y^\eps_t)\big)
&=G(y)+\int_0^t\Big[c+ \big(G(Y^\eps_s)\big)^{-p/(p+1-\beta)}\rA^\e G(Y^\eps_s)\Big]  H\big(s,G(Y^\eps_s)\big)^{p/(p+1-\beta)}\, \di s\\
&+\int_0^t\Psi_s^\eps\, \di s+M_t^{H, \eps},
\ean
where $M^{H, \eps}$ is a local martingale and
\ban
\Psi_s^\eps
=\int_{\bR}\Big[H\big(s,G(Y^\eps_{s-}+u)\big)-H\big(s,G(Y^\eps_{s-})\big)-H'_g\big(s,G(Y^\eps_{s-})(G(Y^\eps_s+u)-G(Y^\eps_{s-}))\Big]\nu^\eps(\di u)
\leq 0
\ean
since $H(t, \cdot)$ is concave. Combined with \eqref{lyap}, this provides that
\ban
H\big(t\wedge \theta^\eps_D,G(Y^\eps_{t\wedge \theta^\eps_D})\big), \quad t\geq 0
\ean
is a local super-martingale. Then, by the Fatou lemma,
\ban
\E_y^{Y, \eps} H\big(t\wedge \theta^\eps_D,G(Y^\eps_{t\wedge \theta^\eps_D})\big)\leq G(y), \quad t\geq 0.
\ean
Note that $G(y)=|y|^{p+1-\beta}$ for $|y|> D$, and 
\ban
H(t,g)\geq \Big(\frac{c}{q}t\Big)^q.
\ean
This gives \eqref{recur} for $|y|>D$. For $|y|\leq D$ we have $\theta^\eps_D=0$ $\P_y^{Y, \eps}$-a.s., and \eqref{recur} holds true trivially.
\end{proof}

By Jensen's inequality, \eqref{recur} yields
\be
\label{theta_moment}
\E_y^{Y, \eps} \theta_D^\eps\leq C|y|^{1-\beta},\quad y\in\bR.
\ee
Since functions $R^\e$ are uniformly bounded, by the strong Markov property this leads to the bound
\ban
\Big|\E_y^{Y, \eps}\int_{0}^{t} R^\e(Y^\e_s)\,\di s\Big|&\leq \Big|\E_y^{Y, \eps}\int_{0}^{t\wedge \theta_D^\eps} R^\e(Y^\e_s)\,\di s\Big|
+\Big|\E_y^{Y, \eps}\int_{t\wedge \theta_D^\eps}^{t-t\wedge \theta_D^\eps} R^\e(Y^\e_s)\,\di s\Big|
\\
&\leq C\E_y^{Y, \eps}  \theta_D^\eps
+\Big|\E_y^{Y, \eps}\Big[\E_{y'}^{Y, \eps}\int_{0}^{t} R^\e(Y^\e_s)\,\di s\Big]_{y'=Y^\e_{t\wedge \theta_D^\eps}}\Big|\\
&\leq C|y|^{1-\beta}+ \sup_{|y'|\leq D,t'\leq t} \Big|\E_{y'}^{Y, \eps}\int_{0}^{t'} R^\e(Y^\e_s)\,\di s\Big|.
\ean
That is, if we manage to show that
\be
\label{principal_bound_3}
\sup_{|y|\leq D, t\geq 0, \e\in(0,1]}\Big|\E_{y}^{Y, \e} \int_0^{t} R^\e(Y^\e_s)\,\di s\Big|<\infty,
\ee
then we have \eqref{principal_bound_2} with
\ban
p'=(1-\beta)-(\alpha+\beta-2)=3-\alpha-2\beta.
\ean
Since $\beta>0$, we have
\ban
4-2\alpha-3\beta=2(2-\alpha-\beta)-\beta<0\Rightarrow p'<\alpha+\beta-1.
\ean
Taking $p\in (p'\vee 0, \alpha+\beta-1)$, we will get that, for $Q(y)=1+|y|^p$, both \eqref{principal_bound} and \eqref{Cesaro} hold true,
which will provide \eqref{R} and complete the entire proof.

To prove \eqref{principal_bound_3}, we modify the dissipativity-based argument from the previous section. 
We would like to use \eqref{diss_<1} with $y_1=Y^\eps_t, y_2=Y^{\eps,0}_t$, and for a given $D$ we denote
\ba\label{Lambda_eps}
\Lambda^\e_D(t)=c_D \int_0^t  \bI_{|Y^{\eps,0}_s|\leq D}\, \di s,
\ea
the total time spent by $|Y^{\eps,0}|$ under the level $D$ up to the time $t$ multiplied by the corresponding local dissipativity index $c_D$.  
By \eqref{diss_<1}, 
\ban
\frac{\di}{\di t} |\Delta_t^\eps|\leq  -c_D  \bI_{|Y^{\eps,0}_t|}\Big(|\Delta_t^\eps|^\beta\wedge  |\Delta_t^\eps|  \Big),
\ean
hence again by the comparison theorem
\ban
|\Delta_t^\eps|\leq  \Upsilon\Big(\Lambda^\eps_D(t), |y|\Big)
\ean
where we denote by $\Upsilon(t, r)$ the solution to the Cauchy problem
\ban
\frac{\di}{\di t}\Upsilon(t, r)=-\Big(\Upsilon(t,r)^\beta\wedge \Upsilon(t, r) \Big), \quad \Upsilon(0, r)=r.
\ean
We have  $\Upsilon(t, r)\leq r$ for $t\geq 0$, and
\ban
\Upsilon(t, r)=\ex^{t_r}\cdot \ex^{-t},\quad t\geq t_r,
\ean
where
\ban
t_r=\frac{(r^{1-\beta}-1)_+}{1-\beta}.
\ean
Since the derivatives of $R^\eps$, $\eps>0$ are uniformly  bounded, this provides for $|y|\leq D$
\be
\label{est}
\Big|\E_{y}^{Y, \e}\int_{0}^t R^\e(Y^\e_s)\,\di s\Big|\leq  C D t_D+ C \ex^{t_D}\E \int_0^\infty \ex^{-\Lambda^\e_D(t)}\,\di t.
\ee
The rest of the proof is contained in the following

\begin{lem}\label{l53} For any $q<\alpha/(2-2\beta)$, there exist $D>0,$ $a>0$, and $C$ such that
\be
\label{tail}
\P(\Lambda^\e_D(t)\leq a t)\leq C(1+t)^{-q},\quad t\geq 0, \quad \eps>0.
\ee
\end{lem}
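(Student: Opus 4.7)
The first step will be to recast $\{\Lambda^\eps_D(t)\leq at\}$ as $\{A^\eps_t\geq (1-a/c_D)t\}$, where $A^\eps_t:=\int_0^t\bI(|Y^{\eps,0}_s|>D)\,ds$ is the time spent by $Y^{\eps,0}$ outside $[-D,D]$. I will then bound the probability of this latter event using the Lyapunov function $G$ already constructed in the proof of Lemma~\ref{l52}: fixing $p\in(0,\alpha+\beta-1)$ and $G$ as in \eqref{G}, the combination of \eqref{AG}, \eqref{K_neg} and the uniform boundedness of $\rA^\eps G$ on bounded sets will yield $\rA^\eps G(y)\leq -c|y|^p+C$ on all of $\bR$, uniformly in $\eps$. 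Applying the It\^o formula to $G(Y^{\eps,0}_t)$ (recall $Y^{\eps,0}_0=0$ and $G(0)=0$) and discarding the nonnegative term $G(Y^{\eps,0}_t)$ gives
\begin{equation*}
c\int_0^t|Y^{\eps,0}_s|^p\,\di s\leq Ct+M^{G,\eps}_t,
\end{equation*}
where $M^{G,\eps}_t=\int_0^t\int_\bR(G(Y^{\eps,0}_{s-}+u)-G(Y^{\eps,0}_{s-}))\,\wt n^\eps(\di u\,\di s)$ is a purely discontinuous local martingale.

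Using $\bI_{|y|>D}\leq D^{-p}|y|^p$, this implies $A^\eps_t\leq D^{-p}(Ct+M^{G,\eps}_t)/c$, so that the event $\{\Lambda^\eps_D(t)\leq at\}$ forces $M^{G,\eps}_t\geq \lambda t$ with $\lambda:=cD^p(1-a/c_D)-C$, which can be made strictly positive by choosing $D$ large and $a$ small enough. Markov's inequality at order $2q$ will then produce
\begin{equation*}
\P(\Lambda^\eps_D(t)\leq at)\leq (\lambda t)^{-2q}\E|M^{G,\eps}_t|^{2q},
\end{equation*}
reducing the whole argument to the $\eps$-uniform moment bound $\E|M^{G,\eps}_t|^{2q}\leq Ct^q$ for each $q<\alpha/(2-2\beta)$.

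This last moment estimate is the main technical obstacle. I plan to derive it from the Burkholder--Davis--Gundy (or Kunita) inequality for purely discontinuous martingales, splitting the jump integrand $(G(y+u)-G(y))^2$ into small-jump ($|u|\leq 1$) and large-jump ($|u|>1$) contributions. The small-jump part admits the pointwise bound $(G(y+u)-G(y))^2\leq Cu^2(1+|y|)^{2(p-\beta)}$ and is controlled via the uniform second-moment estimate $\sup_\eps\int_{|u|\leq 1}u^2\nu^\eps(\di u)<\infty$ from \eqref{unif_levy_nu} (and, in the critical case $\alpha=2$, from \eqref{BG0}). The large-jump part is controlled through the uniform Blumenthal--Getoor bound \eqref{BGnu}, which yields $\int_{|u|>1}|u|^s\nu^\eps(\di u)\leq C$ for every $s<\alpha$; achieving the desired uniformity here is the delicate point, and will likely require a careful choice of $p$ together with a truncation of $G$ so that $|G(y+u)-G(y)|^{2q}$ integrates against $\nu^\eps$ uniformly in $\eps$. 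The previously established occupation bound $\E\int_0^t|Y^{\eps,0}_s|^p\,\di s\leq Ct$ will then propagate to the desired $t^q$ growth. Optimizing $p$ within $(0,\alpha+\beta-1)$ recovers any $q<\alpha/(2-2\beta)$, with the strict inequality reflecting the strict Blumenthal--Getoor condition on the family $\{\nu^\eps\}$.
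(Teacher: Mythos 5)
Your proposal is correct in its main lines, but it is a genuinely different route from the one the paper takes. The paper proves Lemma \ref{l53} by a regeneration argument: it introduces a nested pair of levels $D_0<D$, builds the alternating stopping times $\theta_k^\eps,\chi_k^\eps$ tracking excursions of $Y^{\eps,0}$ between $\{|y|\leq D_0\}$ and $\{|y|>D\}$, bounds $\Lambda^\eps_D(t)$ from below by the cumulative duration $S^{\eps,\uparrow}_{N_t^\eps-1}$ of the ``inward'' blocks, and then estimates the tails of $S^{\eps,\uparrow}_k$ and $S^{\eps,\downarrow}_k$ through moment bounds on the discrete-time martingales of increments, fed by the recurrence Lemma \ref{l52} and the discrete Burkholder--Davis--Gundy inequality. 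You instead go directly for a moment bound on the continuous-time martingale $M^{G,\eps}$ generated by the Lyapunov function $G$, via the pathwise inequality $c\int_0^t|Y^{\eps,0}_s|^p\,\di s\leq Ct+M^{G,\eps}_t$ and Chebyshev. This is shorter, bypasses Lemma \ref{l52} and the excursion bookkeeping entirely, and puts the whole weight on a single $\eps$-uniform moment estimate $\E|M^{G,\eps}_t|^{2q}\leq Ct^q$, which you correctly identify as the delicate step. One small slip: the inclusion $\{\Lambda^\eps_D(t)\leq at\}\subset\{M^{G,\eps}_t\geq\lambda t\}$ with $\lambda=cD^p(1-a/c_D)-C>0$ requires choosing $a\lesssim c_D$ (so $a$ shrinks as $D$ grows, since $c_D=\beta D^{\beta-1}\wedge 1\to 0$), and then $D$ large; this is fine, but should be made explicit.

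On the delicate moment step your instinct is right but the proposed fix (``a truncation of $G$'') is a red herring: truncating $G$ would destroy the Lyapunov drift bound, and it is unnecessary. What is needed is to take $p$ \emph{small}, not large. Using the Kunita/Rosenthal inequality for Poisson stochastic integrals,
\begin{equation*}
\E\big|M^{G,\eps}_t\big|^{2q}\leq C\,\E\Big(\int_0^t\!\!\int\big(G(Y_{s-}+u)-G(Y_{s-})\big)^2\nu^\eps(\di u)\,\di s\Big)^{q}
+C\,\E\int_0^t\!\!\int\big|G(Y_{s-}+u)-G(Y_{s-})\big|^{2q}\nu^\eps(\di u)\,\di s,
\end{equation*}
and splitting at $|u|\lessgtr 1+|y|$ (with $|G(y+u)-G(y)|\leq C|u|(1+|y|)^{p-\beta}$ on the first piece and $\leq C|u|^{p+1-\beta}$ on the second), both inner integrals are bounded uniformly in $y$ and $\eps$ through \eqref{BGnu} and \eqref{unif_levy_nu} \emph{provided} $2q(p+1-\beta)<\alpha$. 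For a given $q<\alpha/(2-2\beta)$ this forces $0<p<\alpha/(2q)-(1-\beta)$, a nonempty interval precisely because $q<\alpha/(2-2\beta)$; and taking additionally $p\leq\beta$ makes $(1+|y|)^{p-\beta}\leq 1$, so the occupation bound $\E\int_0^t|Y_s|^p\,\di s\leq Ct$ is not needed inside the Kunita estimate at all. You then get $\E|M^{G,\eps}_t|^{2q}\leq C(t^q+t)$, hence $\P(\Lambda^\eps_D(t)\leq at)\leq C\lambda^{-2q}(t^{-q}+t^{1-2q})$, which gives \eqref{tail} for $t\geq 1$ once $q>1$ (available since $\alpha/(2-2\beta)>1$ in the non-regular case), the range $t<1$ being trivial and smaller $q$ following by monotonicity. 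So the plan is sound, but the ``optimization of $p$'' goes the opposite direction from what one might expect from a Lyapunov argument: the constraint from the large-jump moments, not the drift strength, is binding.
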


Once Lemma \ref{l53} is proved, we easily complete the entire proof. Namely, because $\alpha+\beta\geq 2$ and $\beta>0$, we have
\ban
\frac{\alpha}{2-2\beta}\geq \frac{2-\beta}{2-2\beta}>1.
\ean
That is, \eqref{tail} holds true for some $D>1$, $a>0,$ and $q>1$. Using the estimate
\ban
\E \ex^{-\Lambda^\e_D(t)}\leq \P(\Lambda^\e_D(t)\leq a t)+\ex^{-at}
\ean
and \eqref{est}, we guarantee \eqref{principal_bound_3} and complete the proof of Theorem \ref{t:main}.

\smallskip

\begin{Aproof}{Lemma}{\ref{l53}}
Without loss of generality, we can assume that $q>1/2$. Let $p$ be such that
\ban
q=\frac{p+1-\beta}{2-2\beta},
\ean
then $p\in (0, \alpha+\beta-1)$, and Lemma \ref{l52} is applicable. Let $D_0>1$ be such that \eqref{recur} holds true with $p$ specified above and
$D=D_0$.

There exists $D>D_0$  large enough, such that
\be
\label{=1}
\sup_{|y|\leq D_0}\P_y^{Y,\e}\Big(\sup_{t\in[0,1]}|Y^\e_t|\geq D\Big)\leq\frac12;
\ee
the calculation here is the same as in Section \ref{s32}, and we omit the details. We fix these two levels $D_0, D$ and define iteratively the sequence of stopping times
\ban
\theta_0^\eps&=0,\\
\chi_k^\eps&=\inf\big\{t\geq \theta^\eps_{k-1}\colon |Y^{\e,0}_t|> D\big\}\wedge(\theta^\eps_{k-1}+1),\\
\theta_k^\eps&=\inf\big\{t\geq \chi^{\eps}_k\colon |Y^{\e,0}_t|\leq D_0\big\},\qquad k\geq 1.\\
\ean
We denote
\ban
S_k^{\e,\uparrow}=\sum_{j=1}^k(\chi_j^\e-\theta_{j-1}^\e), \quad S_k^{\e,\downarrow}=\sum_{j=1}^k(\theta_j^\eps-\chi_j^\eps),
\quad 
\theta_k^\eps=S_k^{\eps,\uparrow}+S_k^{\eps,\downarrow},\quad  k\geq 1,
\ean
and
\ban
N_t^\eps=\min\{k\geq 1\colon \theta_k^\eps\geq t\}, \quad t>0.
\ean
On each of the time intervals $\mathcal{I}_k=[\theta_{k-1}, \chi_k), k\geq 1$ we have 
$
|Y^{\eps, 0}_s|\leq D,  s\in \mathcal{I}_k. 
$
In addition, $\mathcal{I}_k\subset [0, t]$ for $k<N^\eps_t$. 
Thus
\ban
\Lambda^\e_D(t)\geq c_D S_{N_t^\eps-1}^{\eps,\uparrow},
\ean
see \eqref{Lambda_eps} for the definition of $\Lambda_D^\eps(t)$.
Then  for arbitrary $b>0$ we have
\ban
\P(\Lambda^\e_D(t)\leq a t)\leq \P\Big(S_{[bt]}^{\eps,\uparrow}\leq \frac{a}{c_D}t\Big)+\P(N_t^\eps\leq [bt]+1).
\ean
On the other hand,
\ban
\theta_k^\eps\leq S_k^{\e, \downarrow}+k,
\ean
which gives
\ban
\P(N_t^\eps\leq  [bt]+1)=\P\Big(\theta_{ [bt]+1}^\eps \geq t\Big)\leq \P\Big(S_{ [bt]+1}^{\e, \downarrow} 
\geq t- [bt]-1\Big).
\ean
In what follows, we show that there exists $c>0$ small enough, such that
\be
\label{LD}
\P\Big(S_{k}^{\eps,\downarrow}\geq c^{-1}k\Big)\leq Ck^{-q}\quad\text{and}\quad  \P\Big(S_{k}^{\eps,\uparrow}\leq ck\Big)\leq Ck^{-q}, \quad k\geq 1.
\ee
Once we do that, the rest of the proof is easy. Namely, we  take
\ban
b<\big(1+c^{-1}\big)^{-1},
\ean
then  $(1-b)/b>c^{-1}$, and  by the first  inequality in \eqref{LD} we get
\ban
\P(N_t^\eps\leq  [bt]+1)\leq \P\Big(S_{[bt]+1}^{\e, \downarrow} \geq t- [bt]-1\Big)
\leq C ([bt]+1)^{-q}, \quad t\geq t_0=\frac{3}{1-b}.
\ean
Then taking
\ban
a<\frac{c_D bc}{2}
\ean
we will have  by the second inequality in \eqref{LD}
\ban
\P\Big(S_{[bt]}^{\eps,\uparrow}\leq \frac{a}{c_D}t\Big)\leq C ([bt]+1)^{-q}, \quad  t\geq t_0=\frac{3}{1-b}.
\ean
This will give \eqref{tail} for $t\geq t_0$.  One can easily extend \eqref{tail} to the entire axis $[0, \infty)$ simply by increasing $C$. 

Let us proceed with the proof of the first inequality  in \eqref{LD}.  Denote
\ban
s_{k}^{\eps,\downarrow}=S_{k}^{\eps,\downarrow}-S_{k-1}^{\eps,\downarrow}=\theta_k^\eps-\chi_k^\eps,\quad  \rF^{\eps,\downarrow}_k=\rF_{\theta_k^\eps}, \quad k\geq 0,
\ean
then
\ban
M_{k}^{\eps,\downarrow}=\sum_{j=1}^k\Big(s_{j}^{\eps,\downarrow}-\E[s_{j}^{\eps,\downarrow}|\rF^{\eps,\downarrow}_{j-1}]\Big),\quad k\geq 0
\ean
is an $\{\rF^{\eps,\downarrow}_k\}$-martingale. By Lemma \ref{l52}, applied to $D=D_0$, and the strong Markov property,  we have
\ban
\E\Big[\big(s_{k}^{\eps,\downarrow}\big)^{2q}\Big| \rF^{\eps,\downarrow}_{k-1}\Big]
=\E\Big[\big(\theta_k^\eps-\chi_k^\eps\big)^{(p+1-\beta)/(1-\beta)}\Big| \rF_{\theta_{k-1}^\eps}\Big]
\leq C\E_y^{Y, \eps}|Y^\eps_{\chi_1^\eps}|^{p+1-\beta}\Big|_{y=Y^\eps_{\theta_{k-1}^\eps}}.
\ean
Note that
\ban
|Y^\eps_{\theta_{k}^\eps}|\leq D_0, \quad k\geq 0
\ean
by construction. Next, it is easy to show that
\be
\label{chi_stop}
\sup_{|y|\leq D_0, \e\in(0,1]}\E_y^{Y, \eps}|Y^\eps_{\chi_1^\eps}|^{p+1-\beta}<\infty.
\ee
Indeed, let $G\in C^2(\bR,\bR)$ be function specified in \eqref{G}.
Since $p<\alpha+\beta-1$, we have $p+1-\beta<\alpha$
and thus by \eqref{A_eps} the family of functions
$\{\rA^\eps G\}_{\eps\in(0,1]}$ is well defined and is uniformly bounded on the set $\{|y|\leq D_0\}$. We have
\ban
\E_y^{Y, \eps}|Y^\eps_{\chi_1^\eps}|^{p+1-\beta}\leq G(y)+\E_y^{Y, \eps}\int_0^{\chi_1^\eps}\rA^\eps G(Y^\eps_s)\, \di s
\leq G(y)+\sup_{|y'|\leq D_0}|\rA^\eps G(y')|
\ean
since $\chi^\eps_1\leq 1$ by construction. This yields \eqref{chi_stop}. Summarizing the above calculation, we conclude that
\be
\label{cond_mom_2q}
\E\Big[\big(s_{k}^{\eps,\downarrow}\big)^{2q}\Big| \rF^{\eps,\downarrow}_{k-1}\Big]\leq C, \quad k\geq 1.
\ee
Consequently, for some $c^\downarrow>0$ we have
\be
\label{cond_mom}
\E\Big[s_{k}^{\eps,\downarrow}\Big| \rF^{\eps,\downarrow}_{k-1}\Big]\leq c^\downarrow, \quad k\geq 1,
\ee
and
\ban
\E\Big[\big|M_{k}^{\eps,\downarrow}-M_{k-1}^{\eps,\downarrow}\big|^{2q}\Big| \rF^{\eps,\downarrow}_{k-1}\Big]\leq C, \quad k\geq 1.
\ean
By the Burkholder--Davis--Gundy inequality \cite[Theorem 23.12]{Kallenberg-02}, and Jensen's inequality, we have
\ban
\E\Big|M_{k}^{\eps,\downarrow}\Big|^{2q}
\leq C(p)\E\Big(\sum_{j=1}^k\big(M_{j}^{\eps,\downarrow}-M_{j-1}^{\eps,\downarrow}\big)^{2}\Big)^q\leq C(q)k^{q-1}\sum_{j=1}^k\E\big(M_{j}^{\eps,\downarrow}-M_{j-1}^{\eps,\downarrow}\big)^{2q}\leq Ck^q.
\ean
Now we obtain the first inequality in \eqref{LD}: if $c>0$ is  such that $c^{-1}>c^\downarrow$, then
\ban
\P\left(S_{k}^{\eps,\downarrow}\geq c^{-1}k\right)\leq \P\left(M_{k}^{\eps,\downarrow}\geq (c^{-1}-c^\downarrow)k\right)
\leq (c^{-1}-c^\downarrow)^{-2q}k^{-2q}\E  \Big|M_{k}^{\eps,\downarrow}\Big|^{2q}\leq Ck^{-q}.
\ean

The proof of the second inequality in \eqref{LD} is similar and simpler. We denote
\ban
s_{k}^{\eps,\uparrow}=S_{k}^{\eps,\uparrow}-S_{k-1}^{\eps,\uparrow}=\chi_k^\eps-\theta_{k-1}^\eps,\
\rF^{\eps,\uparrow}_k=\rF_{\chi_k^\eps}, \quad k\geq 1, \ \rF^{\eps,\uparrow}_0=\rF_{0},
\ean
and put
\ban
M_{k}^{\eps,\uparrow}=\sum_{j=1}^k\Big(s_{j}^{\eps,\uparrow}-\E[s_{j}^{\eps,\uparrow}|\rF^{\eps,\uparrow}_{j-1}]\Big),\quad k\geq 0,
\ean
Now $s_{k}^{\eps,\uparrow}\leq 1$
by construction, hence analogues of \eqref{cond_mom_2q} and \eqref{cond_mom} trivially hold true, which gives
\ban
\E\Big|M_{k}^{\eps,\uparrow}\Big|^{2q}\leq Ck^{q}.
\ean
 On the other hand, by \eqref{=1} and the strong Markov property,
\ban
\E\Big[s_{k}^{\eps,\uparrow}\Big| \rF^{\eps,\uparrow}_{k-1}\Big]\geq \frac{1}{2}, \quad k\geq 1.
\ean
Then for $c<1/2$ we have
\ban
\P\Big(S_{k}^{\eps,\uparrow}\leq ck\Big)\leq \P\Big(|M_{k}^{\eps,\uparrow}|\geq \Big(\frac{1}{2}-c\Big)k\Big)
\leq \Big(\frac{1}{2}-c\Big)^{-2q}k^{-2q}\E  \Big|M_{k}^{\eps,\uparrow}\Big|^{2q}\leq Ck^{-q}.
\ean
\end{Aproof}

\appendix

\section{Auxiliaries to the proof of Theorem \ref{t:main}: regular case\label{ApB}}
In this section, we assume conditions of Theorem \ref{t:main} to hold true, and \eqref{truncation} to hold true for some $A$.
First, we give some basic integral estimates. Denote
\ban
T^\e(r)&=\mu^\e([r,\infty)),\quad r>0,
\ean
the tail function for $\mu^\eps$. 
By \eqref{truncation}, $T^\e(r)=0$, $r>A$,   
and by \eqref{mu_conv}, for each $r_0>0$
\ban
\sup_{r>r_0}\sup_{\e>0} T^\e(r)<\infty.
\ean
Hence by \eqref{BG}  we have
\begin{align}
\label{e:T}
\sup_{r>0}\sup_{\e\in(0,1]} r^\alpha T^\e(r)<\infty.
\end{align}
Next, for each $c>0$
\ban
\limsup_{\delta\searrow 0}\sup_{\eps\in(0,1]}\delta^2T^\eps(\delta)
\leq \limsup_{\delta\searrow 0}\sup_{\eps\in(0,1]} \delta^2\mu^\eps([c, \infty))
+\limsup_{\delta\searrow 0}\sup_{\eps\in(0,1]}\int_{[\delta, c)} z^2\mu^\eps(\di z)=\sup_{\eps>0}\int_{(0, c)} z^2\mu^\eps(\di z).
\ean
Since $c>0$  is arbitrary,  the above inequality and \eqref{BG0} yield
\be
\label{e:T2}
\lim_{\delta\to 0}\sup_{\e\in(0,1]}\delta^2 T^\e(\delta)=0
\ee
for $\alpha=2$. The same assertion holds true for $\alpha<2$ by \eqref{e:T}.

Using \eqref{e:T2}, we can perform integration by parts:
\be
\label{IBP}
2\int_0^r z T^\e(z)\,\di z=  z^2 T^\e(z)\Big|_{0+}^r-  \int_0^r z^2 \,\di T^\e(z)= r^2  T^\e(r)+
  \int_0^r z^2\mu^\e(\di z).
\ee
For $\alpha<2$, by \eqref{e:T} and \eqref{truncation} this  immediately gives
\be
\label{e:Tbounds}
\sup_{\e\in(0,1]}\int_0^\infty z^2\mu^\e(\di z)\leq 2\sup_{\e\in(0,1]} \int_0^\infty z T^\e(z)\,\di z<\infty
\ee
and
\be
\label{e:Tlimit}
\sup_{\e\in(0,1]}\int_0^\delta z^2\mu^\e(\di z)\leq 2\sup_{\e\in(0,1]} \int_0^\delta z T^\e(z)\,\di z\to 0, \quad \delta\to 0.
\ee
For $\alpha=2$, the same relations hold true by \eqref{BG0}.

From now on, we assume that $(\alpha, \beta)\in \Xi_{\mathrm{regular}}$. The following lemma describes the local ($v\to 0$) and
the asymptotic ($v\to \infty$) behaviour of the functions $H^\eps$ defined in \eqref{Heps}.

\begin{lem}
\label{l:H}
For each $\e\in(0,1]$, the function
\ban
H^\e(v) = \int_0^A \Big(F(v+z)+  F(v-z)   -   2F(v)\Big)\, \mu^\e(\di z)
\ean
is well defined, continuous, and odd. In addition,
\ba
\label{e:H0}
\lim_{v\to 0}\sup_{\e\in(0,1]} |H^\e(v)|=0,
\ea
and, for every $\delta>0$,
\ba
\label{e:Hinf}
\sup_{|v|\geq \delta}\sup_{\e\in(0,1]}|v|^{\beta} |H^\e(v)|<\infty.
\ea
\end{lem}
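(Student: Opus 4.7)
The plan is to control $H^\e(v)$ by a sharp pointwise estimate on the centred second difference
$$
\Delta_z F(v):=F(v+z)+F(v-z)-2F(v),\quad z>0,\ v\in\bR,
$$
and then integrate using the uniform tail bounds for $\mu^\e$ already established in \eqref{e:T}--\eqref{e:Tlimit}. The key observation is the scaling identity $F(\lambda v)=\lambda^{2-\beta}F(v)$ for $\lambda>0$, which gives $\Delta_z F(v)=z^{2-\beta}g(v/z)$ with $g(u):=F(u+1)+F(u-1)-2F(u)$. The function $g$ is continuous and odd, hence bounded on $[-2,2]$; for $|u|\geq 2$ the function $F$ is $C^2$ on $[u-1,u+1]$ with $|F''(w)|=(1-\beta)|w|^{-\beta}$, so the second-order Taylor identity
$$
g(u)=\int_0^1 (1-s)\bigl[F''(u+s)+F''(u-s)\bigr]\,\di s
$$
yields $|g(u)|\leq C|u|^{-\beta}$. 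Unwinding the scaling, this produces the two pointwise bounds
$$
|\Delta_z F(v)|\leq C_1 z^{2-\beta}\ \text{if}\ |v|\leq 2z,\qquad |\Delta_z F(v)|\leq C_2 z^2 |v|^{-\beta}\ \text{if}\ |v|>2z.
$$

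Oddness of $H^\e$ follows at once from the oddness of $F$. For well-definedness and continuity, the bulk-regime bound dominates the integrand defining $H^\e(v)$ by $C_1 z^{2-\beta}$ uniformly in $v$ in a neighbourhood of any fixed $v_0$. Integration by parts as in \eqref{IBP}, together with the regular assumption on $(\alpha,\beta)$ and the uniform estimates \eqref{e:T}, \eqref{e:T2}, gives
$$
\sup_{\e\in(0,1]}\int_0^A z^{2-\beta}\mu^\e(\di z)<\infty,\qquad \sup_{\e\in(0,1]}\int_0^\eta z^{2-\beta}\mu^\e(\di z)\to 0,\ \eta\to 0.
$$
Dominated convergence then yields continuity of each $H^\e$, together with the uniform bound $\sup_{v\in\bR,\,\e}|H^\e(v)|<\infty$ that is used later.

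For \eqref{e:H0}, fix $\theta>0$ and choose $\eta>0$ such that $C_1\sup_\e\int_0^\eta z^{2-\beta}\mu^\e(\di z)<\theta/2$. For $|v|<\eta$ I split $[0,A]=[0,|v|/2]\cup[|v|/2,\eta]\cup[\eta,A]$. On $[\eta,A]$, uniform continuity of $F$ on compact sets gives $\sup_{z\in[\eta,A]}|\Delta_z F(v)|\to 0$ as $v\to 0$, while $\mu^\e([\eta,A])\leq T^\e(\eta)\leq C\eta^{-\alpha}$ is uniformly bounded in $\e$, so this part vanishes uniformly. On $[|v|/2,\eta]$ the bulk-regime bound contributes at most $C_1\int_0^\eta z^{2-\beta}\mu^\e(\di z)<\theta/2$. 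On $[0,|v|/2]$ the tail-regime bound gives $C_2 |v|^{-\beta}\int_0^{|v|/2}z^2\mu^\e(\di z)$; for $\alpha+\beta<2$ this is controlled by $C|v|^{2-\alpha-\beta}\to 0$ via \eqref{IBP} and $T^\e(z)\leq Cz^{-\alpha}$, while for $(\alpha,\beta)=(2,0)$ it is bounded by $2\int_0^{|v|/2}z T^\e(z)\,\di z$, which tends to $0$ uniformly by \eqref{e:Tlimit}.

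For \eqref{e:Hinf}, when $|v|>2A$ the tail-regime bound applies for every $z\in[0,A]$, yielding
$$
|H^\e(v)|\leq C_2 |v|^{-\beta}\int_0^A z^2 \mu^\e(\di z)\leq C|v|^{-\beta},
$$
where the last integral is uniformly bounded in $\e$ by \eqref{canonical_bound} together with $\mu^\e([1,A])\leq T^\e(1)\leq C$; thus $|v|^\beta|H^\e(v)|\leq C$. For $|v|\in[\delta,2A]$ the uniform bound $|H^\e(v)|\leq C$ combined with the boundedness of $|v|^\beta$ on $[\delta,2A]$ finishes the argument. The main technical obstacle is the two-regime scaling bound: it must cover all $\beta\in\bR$ (for $\beta<0$ the function $g$ itself is unbounded at infinity with growth rate $|u|^{-\beta}$; for $\beta\in(1,2)$ the function $F$ is only Hölder near the origin, so the Taylor argument is available only on $|u|\geq 2$), and then match up against the regular condition $\alpha+\beta<2$ or $(\alpha,\beta)=(2,0)$, which is exactly what makes $\int_0^A z^{2-\beta}\mu^\e(\di z)$ uniformly finite and uniformly vanishing as the cut-off shrinks.
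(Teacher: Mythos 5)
Your proof is correct and takes a genuinely different route from the paper's. The paper first integrates by parts to express $H^\e$ through the \emph{first} derivative of $F$ against the tail function, namely $H^\e(v)=\int_0^A\big(F'(v+w)-F'(v-w)\big)T^\e(w)\,\di w$ (for $\alpha<2$), then rescales and analyses the asymptotics of $I(\sigma)=\int_0^\sigma\psi(\rho)\rho^{-\alpha}\,\di\rho$ as $\sigma\to0$ and $\sigma\to\infty$, with a separate Lipschitz-type argument for the point $(\alpha,\beta)=(2,0)$. You instead exploit the homogeneity $F(\lambda v)=\lambda^{2-\beta}F(v)$ directly at the level of the second difference, reducing everything to two pointwise regimes, $|\Delta_zF(v)|\le C_1z^{2-\beta}$ for $|v|\le 2z$ and $|\Delta_zF(v)|\le C_2z^2|v|^{-\beta}$ for $|v|>2z$, obtained from continuity of $g$ on $[-2,2]$ and second-order Taylor on $|u|\ge2$. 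Your route is more unified: it treats all $\beta<2$ (including $\beta<0$ and $\beta\in(1,2)$) in one stroke and absorbs $\alpha=2$ without a separate argument, at the cost of yielding only the qualitative statements \eqref{e:H0}--\eqref{e:Hinf} rather than the explicit local rates \eqref{H_zero} that the paper records.

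Two small inaccuracies, neither of which affects the argument. First, the phrase that the bulk-regime bound ``dominates the integrand \dots\ by $C_1 z^{2-\beta}$ uniformly in $v$ near $v_0$'' is not literally true when $\beta<0$: for $z<|v|/2$ your own tail bound gives $C_2z^2|v|^{-\beta}$, which is \emph{not} majorised by a multiple of $z^{2-\beta}$ when $\beta<0$; what you actually have near a fixed $v_0$ is the dominating function $C\big(z^{2-\beta}+z^2\big)$, still integrable against $\mu^\e$, so dominated convergence goes through. Second, the claimed uniform bound $\sup_{v\in\bR,\,\e\in(0,1]}|H^\e(v)|<\infty$ is false for $\beta<0$: by your tail estimate $|H^\e(v)|\le C|v|^{-\beta}$ for $|v|>2A$, which grows. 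The correct (and sufficient) statement, which is also what the paper uses, is uniform boundedness on bounded sets, and that is exactly what \eqref{e:H0} together with \eqref{e:Hinf} provide.
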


\begin{proof}
First, let us consider the case $\alpha<2$, note that in this case we have $\alpha+\beta<2$.  By the Fubini theorem,
\ba
\label{B9}
H^\e(v)&= \int_0^A \Big(F(v+z)+  F(v-z)   -   2F(v)\Big)\, \mu^\e(\di z)\\&=-\int_0^A \int_0^z \Big(F'(v+w)- F'(v-w)\Big)\,\di w \, T^\e(\di z)=\int_0^A \Big(F'(v+w)- F'(v-w)\Big)T^\e(w)\,\di w.
\ea
The r.h.s.\ in \eqref{B9} is well defined because, by \eqref{e:T}, for $v>0$
\ba
\label{e:intpsi}
|H^\e(v)|\leq C \int_0^A \Big||v+w|^{1-\beta}- |v-w|^{1-\beta}\Big|\frac{\di w}{w^\alpha}= C |v|^{2-\alpha-\beta} \int_0^{A/|v|}  \frac{\psi(\rho)}{\rho^\alpha} \,\di \rho,
\ea
where we denote
\ban
\psi(\rho)=\Big|(1+\rho)^{1-\beta}- |1-\rho|^{1-\beta}\Big|.
\ean
The latter integral in\eqref{e:intpsi} is finite because
\ban
\frac{\psi(\rho)}{\rho^\alpha}\sim
2(1-\beta)\rho^{1-\alpha}, \quad \rho \to 0,
\ean
and the function $\psi$ either is continuous for $\beta\leq 1$, or satisfies
\ban
\psi(\rho)\sim \frac{1}{|1-\rho|^{\beta-1}},\quad \rho\to 1
\ean
for $\beta \in (1,2)$. Since
\ban
\psi(\rho)\sim 2|1-\beta|\rho^{-\beta}, \quad \rho\to +\infty
\ean
one can easily derive for the function
\ban
I(\sigma)= \int_0^{\sigma} \frac{\psi(\rho)}{\rho^\alpha}\, \di \rho
\ean
the following:
\be
\label{I_infty}
I(\sigma)\sim \begin{cases}
\displaystyle          \frac{2|1-\beta|}{1-\alpha-\beta}  \sigma^{1-\alpha-\beta},\quad \alpha+\beta<1,\\
          2|1-\beta| \ln \sigma \quad \alpha+\beta=1,\\
          c_I,\quad 1<\alpha+\beta<2,
         \end{cases}\quad \sigma\to \infty,
\ee
where
\ban
c_I=\int_0^{\infty} \frac{\psi(\rho)}{\rho^\alpha}\, \di \rho\in (0, \infty), \quad \alpha+\beta>1.
\ean
Thus there exist $v_0>0$ and $C>0$ such that
\be
\label{H_zero}
|H^\e(v)|\leq C\cdot \begin{cases}
          |v|,\quad \alpha+\beta<1,\\
          |v| \ln \frac{1}{|v|}, \quad \alpha+\beta=1,\\
          |v|^{2-\alpha-\beta},\quad 1<\alpha+\beta<2,
         \end{cases}\quad |v|\leq v_0,
\ee
which gives \eqref{e:H0}. The proof of \eqref{e:Hinf} is similar and is based on the relation
\ban
I(\sigma)\sim \frac{2|1-\beta|}{2-\alpha} \sigma^{2-\alpha},  \quad \sigma\to 0,
\ean
we omit the details.

Next, let $\alpha=2$; note that, in this case $\beta\leq 0$. Then
\be
\label{Fder_Lip}
\Big|F'(v+w)- F'(v-w)\Big|\leq C\big(1+|v|^{-\beta}\big) w, \quad w\in (0,A),
\ee
and the integral in the right hand side of \eqref{B9} is well defined by \eqref{e:Tbounds}. The same inequality yields \eqref{e:Hinf}.

To prove \eqref{e:H0}, we restrict ourselves to the case $0<|v|\leq 2A$, and decompose
\ban
H^\e(v)=  \Big(\int_0^{|v|/2}+\int_{|v|/2}^2\Big) \Big(F'(v+w)- F'(v-w)\Big)T^\e(w)\,\di w =:H_{1}^\e(v)+H_{2}^\e(v).
\ean
The term $H^\e_{2}$ admits estimates similar to those we had above. Namely, we have
\ban
|H^\e_{2}(v)|\leq C |v|^{2-\alpha-\beta}I_{2}\Big(\frac{A}{|v|}\Big),\quad  I_{2}(\sigma)=\int_{1/2}^{\sigma} \Big((1+\rho)^{1-\beta}- |1-\rho|^{1-\beta} \Big) \frac{\di \rho}{\rho^2}.
\ean
 For $I_2$ analogue of \eqref{I_infty} holds true, and thus  $H_2^\e$ satisfies \eqref{H_zero}. To estimate $H_{1}^\e$ we use the Lipschitz condition \eqref{Fder_Lip} and assumption $v\leq 2A$:
\ban
|H_1^\e(v)|\leq C \sup_{\e\in(0,1]} \int_0^{|v|/2}w T^\eps(w)\, \di w<\infty.
\ean
Now \eqref{e:H0} follows by \eqref{e:Tlimit}.
\end{proof}

\smallskip

In the following lemma, we justify the formal relation \eqref{Ito_formal}.
\begin{lem}
\label{lIto}
Identity \eqref{Ito_formal} 
holds true 
with the local martingale $M^\eps$ defined by \eqref{Meps}.
\end{lem}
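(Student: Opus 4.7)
The function $F(v)=(2-\beta)^{-1}|v|^{2-\beta}\sgn v$ is smooth away from the origin but loses $C^2$ regularity at $0$ (and, for $\beta>1$, even $C^1$), so the classical It\^o formula cannot be applied to $F$ directly. My plan is a regularization argument: construct $C^2$ approximants $F_n$ that agree with $F$ outside a shrinking neighborhood of $0$, apply the classical It\^o formula to $F_n(V^\eps_t)$, and pass to the limit $n\to\infty$.

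\textbf{Construction and It\^o formula for $F_n$.} For each $n\geq 1$, let $F_n\in C^2(\bR)$ be odd, equal to $F$ for $|v|\geq 1/n$, and defined on $[-1/n,1/n]$ by matching $F,F',F''$ at $\pm 1/n$ via a polynomial. The explicit form of $F$ yields $\|F_n\|_{L^\infty(-1/n,1/n)}\leq C n^{-(2-\beta)}$ and $\|F_n'\|_{L^\infty(-1/n,1/n)}\leq C n^{\beta-1}$, while $F_n$ is globally $(2-\beta)$-H\"older with a constant independent of $n$. Under the truncation \eqref{truncation}, $Z^\eps$ is a square-integrable martingale and $V^\eps$ is a pure-jump semimartingale with finite-variation continuous part $b^\eps(V^\eps_s)\,\di s$, where $b^\eps(v)=-\eps^{-1}|v|^\beta\sgn v$. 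Applying the standard It\^o formula to $F_n(V^\eps_t)$, and using the symmetry of $\mu^\eps$ to kill the linear-in-$z$ compensator, gives
\ban
F_n(V^\eps_t)=F_n(v_0)+\int_0^t F_n'(V^\eps_s)b^\eps(V^\eps_s)\,\di s+M^{\eps,n}_t+\int_0^t H^\eps_n(V^\eps_s)\,\di s,
\ean
where $M^{\eps,n}$ and $H^\eps_n$ are defined as in \eqref{Meps}--\eqref{Heps} with $F$ replaced by $F_n$.

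\textbf{Passage to the limit.} Using \eqref{213} to localize $V^\eps$ on $\{\sup_{s\leq t}|V^\eps_s|\leq R\}$ with arbitrarily high probability, I pass $n\to\infty$ term by term. Pointwise, $F_n(V^\eps_t)\to F(V^\eps_t)$ and $F_n(v_0)\to F(v_0)$ by uniform convergence on compacts. For the drift, $F_n'(v)b^\eps(v)=-v/\eps$ for $|v|\geq 1/n$ and $|F_n'(v)b^\eps(v)|\leq C/(n\eps)$ on $[-1/n,1/n]$, so bounded convergence identifies the limit as $-\eps^{-1}\int_0^t V^\eps_s\,\di s$. For the martingale, the It\^o isometry bounds $\E|M^{\eps,n}_t-M^\eps_t|^2$ by $\E\int_0^t\int|G_n(V^\eps_{s-}+z)-G_n(V^\eps_{s-})|^2\mu^\eps(\di z)\,\di s$, with $G_n=F_n-F$ supported in $[-1/n,1/n]$, bounded by $Cn^{-(2-\beta)}$ and uniformly $(2-\beta)$-H\"older; splitting at $|z|=1/n$ and invoking $T^\eps(r)\leq Cr^{-\alpha}$ from \eqref{e:T} produces an estimate of order $n^{\alpha+2\beta-4}\to 0$ thanks to the principal assumption $\alpha+2\beta<4$.

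\textbf{Main obstacle.} The most delicate step is $\int_0^t H^\eps_n(V^\eps_s)\,\di s\to\int_0^t H^\eps(V^\eps_s)\,\di s$. Pointwise convergence $H^\eps_n(v)\to H^\eps(v)$ on $\bR\setminus\{0\}$ is routine by dominated convergence, but the crux is a uniform-in-$n$ local bound on the family $\{H^\eps_n\}$ near $v=0$. This is obtained by repeating the integration-by-parts analysis of Lemma~\ref{l:H} for $F_n$ in place of $F$, using that the H\"older constants entering its proof are uniform in $n$. Precisely here the hypothesis $(\alpha,\beta)\in\Xi_{\mathrm{regular}}$ becomes indispensable: outside this region the family $\{H^\eps_n\}$ fails to be equicontinuous at the origin (cf.~Fig.~\ref{f:H}), which is exactly why the approximate corrector approach must be replaced by the quasi-ergodic argument of Section~\ref{s5}. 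With the uniform local bound in hand, bounded convergence closes the argument and delivers \eqref{Ito_formal}.
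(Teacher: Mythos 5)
Your proposal follows the same regularization strategy as the paper's proof: construct $C^2$ approximants $F_n$ that agree with $F$ (or match its derivatives) away from $0$, apply the classical It\^o formula to $F_n(V^\eps)$, and pass to the limit $n\to\infty$. The one genuine deviation is your handling of the martingale term: the paper reduces $M^{\eps,n}_t\to M^\eps_t$ to the same stopping/truncation argument used in proving \eqref{M_conv}, whereas you apply the It\^o isometry directly to $G_n=F_n-F$, exploiting its compact support in $[-1/n,1/n]$, the $L^\infty$ bound $O(n^{\beta-2})$, and the tail bound $T^\eps(r)\leq Cr^{-\alpha}$ from \eqref{e:T}; both routes deliver the same $n^{\alpha+2\beta-4}$ rate, and yours is arguably more self-contained. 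One small imprecision: for $\beta<1$ a non-constant ``uniformly $(2-\beta)$-H\"older'' function with exponent $2-\beta>1$ does not exist; the bound you actually have (and which suffices for the small-$|z|$ part of the isometry) is that $G_n$ is Lipschitz on its support with constant of order $n^{\beta-1}$, which again gives $n^{\alpha+2\beta-4}$. Finally, your emphasis that the local bound on $\{H^{\eps}_n\}$ near $v=0$ must be uniform in $n$ is a worthwhile clarification: the paper only notes that the analogue of Lemma~\ref{l:H} is simpler here because no uniformity in $\eps$ is required, but the uniformity in $n$ is exactly what is needed to pass to the limit in $\int_0^t H^{\eps}_n(V^\eps_s)\,\di s$, and it is indeed supplied by the regularity hypothesis $(\alpha,\beta)\in\Xi_{\mathrm{regular}}$.
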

\begin{proof}
For $\beta<0$, $F\in C^2(\bR,\bR)$, and the standard It\^o formula holds. For $\beta\in [0,2)$,
we consider an approximating family $F_m\in C^2(\bR,\bR)$, $m\geq 1$, for $F$, which satisfies the following:
\begin{align}
\label{e:p1}
\sup_{v\in\bR}|F(v)-F_\delta(v)|&\leq  \frac{C}{m^{2-\beta}},\\
\notag
F'(v)&\equiv F'_m(v)\text{ for } |v|\geq \frac{1}{m},\\
\label{e:p2}
\sup_{|v|\leq \frac{1}{m}}\Big|F'_m(v)|v|^\beta\sgn v -v\Big|&\leq \frac{c}{m}\quad \text{and}\quad  F_m'(v)|v|^\beta\sgn v
\equiv v\text{ for } |v|\geq \frac{1}{m},\\
\notag
\lim_{m\to \infty}F''_m(v)&= F''(v)\quad \text{for any} \quad v\neq 0.
\end{align}
One particular example of such a family is given by
\ban
F_m(v)=\begin{cases}
           \displaystyle \Big( \frac{1-\beta^2}{3(2-\beta)} \frac{1}{m^{2-\beta}}+ \frac{1}{2-\beta}|v|^{2-\beta}\Big)\sgn v,\quad |v|\geq \frac{1}{m},\\
         \displaystyle   \frac{1+\beta}{2}\frac{1}{m^{1-\beta}} v +\frac{1-\beta}{6}m^{1+\beta}v^3 ,\quad |v|<\frac{1}{m}.
              \end{cases}
\ean
The It\^o formula applied to $F_m$ yields
\ba
\label{eq:itodelta}
F_m(V_t^\e)&=F_m(V_0^\e)-\frac{1}{\eps}\int_0^t F'_m(V_{s}^\e)|V_{s}^\e|^\beta \sgn V_{s}^\e \,\di s+M^{\e}_m(t) +\int_0^t H^{\e,m}(V_s^\e)\,\di s,  \\
M^{\e}_m(t)&=\int_0^t\int_{|z|\leq A} \Big(F_m(V_{s-}^\e+z)- F_m (V_{s-}^\e)\Big)\wt N^\e(\di s,\di z), \\
H^{\e,m}(v)&=
\int_0^\infty \Big(F_m(v+z)+  F_m(v-z)   -   2F_m(v)\Big)\, \mu^\eps(\di z).
\ea
By construction, we have
\ban
&F_m(V_t^\e)\to F(V_t^\e), \quad
F_m(v_0^\e)\to F(v_0^\e),\quad m\to\infty,\\
&\int_0^t \Big( F'_m (V_{s}^\e)|V_{s}^\e|^\beta\sgn V_{s}^\e - V_{s}^\e\Big)\,\di s\to 0, \quad m\to\infty,
\ean
in probability. To analyse the behaviour of the martingale part $M^{\e}_m(\cdot)$,
we repeat, with proper changes, the argument used to prove \eqref{M_conv}. Namely, truncating the small jumps,
stopping the processes at the time moments
\ban
\tau_R^\eps=\inf\{t\colon |V^\eps_t|>R\},\quad R>0,
\ean
and using Theorem \ref{t:V}, we can show that
\ban
M^{\e}_m(t)\to M^\e_t, \quad m\to \infty,
\ean
in probability. Finally, one has
\ban
H^{\e,m}\to H^\e, \quad m\to \infty
\ean
uniformly of any bounded set. 
We omit the detailed proof of this statement, which is very similar to Lemma \ref{l:H} but is 
substantially simpler for it requires no uniform estimates in $\eps$. 
Taking $m\to \infty$ in  \eqref{eq:itodelta}, we obtain the required It\^o formula.
\end{proof}

\section{Auxiliaries to the proof of Theorem \ref{t:main}: non-regular case\label{ApC}}
\begin{lem}
\label{l:K}
Let $G\in C^2(\bR,\bR)$
be such that for some $\sigma\in (0, \alpha)$ and all $|y|\geq 1$
\be
\label{Gders}
|G'(y)|\leq C |y|^{\sigma-1} \quad \text{and}\quad |G''(y)|\leq C |y|^{\sigma-2}.
\ee
Then the family
\be
\label{K}
K^\e(y)=\int_0^\infty \Big(G(y+u)+G(y-u)-2G(y)\Big)\,\nu^\e(\di u), \quad \e\in(0,1],
\ee
satisfies
\ban
\sup_{\e\in(0,1]}|K^\e(y)|\leq C(1+|y|)^{\sigma-\alpha}
\ean
for $\alpha\in (0,2)$ and
\ban
\sup_{\e\in(0,1]}|K^\e(y)|\leq C(1+|y|)^{\sigma-2}\ln \big(2+|y|\big)
\ean
for $\alpha=2$.
\end{lem}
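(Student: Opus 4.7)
The plan is to split the integral defining $K^\e(y)$ into contributions from small jumps, $|u|\le|y|/2$, and large jumps, $|u|>|y|/2$, and to handle a bounded range of $|y|$ separately. The key tool throughout will be the uniform Blumenthal--Getoor-type tail bound \eqref{BGnu} combined with an integration-by-parts identity of the type \eqref{IBP}, which converts that tail bound into uniform-in-$\e$ moment estimates for the measures $\nu^\e$. For bounded $|y|\le 2$ the result is immediate: since $G\in C^2(\bR,\bR)$ we have $|G(y+u)+G(y-u)-2G(y)|\le C(u^2\wedge 1)$ uniformly in such $y$, and \eqref{unif_levy_nu} yields boundedness of $K^\e(y)$ uniformly in $\e$. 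Thus I will concentrate on $|y|\ge 2$.

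For the small-jump piece $K^{\e,s}(y):=\int_{0}^{|y|/2}\bigl(G(y+u)+G(y-u)-2G(y)\bigr)\nu^\e(\di u)$, I would apply Taylor's theorem to the symmetric second difference, giving $|G(y+u)+G(y-u)-2G(y)|\le u^2\sup_{|w|\le|u|}|G''(y+w)|$. When $|u|\le|y|/2$, the argument $y+w$ stays in $[|y|/2,3|y|/2]\subset[1,\infty)$, so \eqref{Gders} forces $|G''(y+w)|\le C|y|^{\sigma-2}$ and therefore $|K^{\e,s}(y)|\le C|y|^{\sigma-2}\int_0^{|y|/2}u^2\,\nu^\e(\di u)$. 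Using \eqref{BGnu} and integration by parts exactly as in the derivation of \eqref{e:Tbounds}, the truncated second moment is bounded by $CR^{2-\alpha}$ when $\alpha<2$, producing the contribution $C|y|^{\sigma-\alpha}$. For $\alpha=2$ the small-$u$ part requires the additional assumption \eqref{BG0} (which forces $\lim_{\delta\to 0}\sup_\e\delta^2 \nu^\e(|u|>\delta)=0$ as in \eqref{e:T2}); splitting at $u=1$ and using \eqref{unif_levy_nu} on $(0,1]$ and integration by parts on $[1,R]$ gives $\int_0^{|y|/2}u^2\,\nu^\e(\di u)\le C(1+\ln|y|)$, hence the logarithmic contribution $C|y|^{\sigma-2}\ln(2+|y|)$.

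For the large-jump piece $K^{\e,\ell}(y)$, since \eqref{Gders} integrates to the polynomial growth bound $|G(y)|\le C(1+|y|^\sigma)$, one gets $|G(y+u)+G(y-u)-2G(y)|\le C(1+|y|^\sigma+|u|^\sigma)$. The constant and $|y|^\sigma$ terms integrate against $\nu^\e$ on $\{|u|>|y|/2\}$ to at most $C(1+|y|^\sigma)|y|^{-\alpha}\le C|y|^{\sigma-\alpha}$ by a direct application of \eqref{BGnu}. For the $|u|^\sigma$ term, integration by parts gives
$$\int_r^\infty u^\sigma\,\nu^\e(\di u)=r^\sigma\nu^\e([r,\infty))+\sigma\int_r^\infty u^{\sigma-1}\nu^\e([u,\infty))\,\di u,$$
where the boundary term at infinity vanishes owing to \eqref{truncation_nu}; combined with $\sigma<\alpha$ and \eqref{BGnu} this yields $\int_{|u|>|y|/2}|u|^\sigma\,\nu^\e(\di u)\le C|y|^{\sigma-\alpha}$ uniformly in $\e$. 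Summing the small- and large-jump contributions with the bounded-$|y|$ estimate produces the claimed bounds.

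The one point that requires care, and in fact is the reason the lemma is phrased with a supremum over $\e$, is that every moment estimate must be uniform in $\e\in(0,1]$. This is exactly what \eqref{BGnu} delivers for the tails, and what \eqref{BG0} delivers for the small-jump second moment in the borderline case $\alpha=2$; the truncated support \eqref{truncation_nu} harmlessly kills the boundary terms at infinity in the integration-by-parts identities, so the same bounds hold without any dependence on the (possibly diverging) cut-off level $A\e^{-\gamma}$.
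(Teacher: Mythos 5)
Your treatment of the bounded regime $|y|\le 2$ is false as written. You claim that $G\in C^2(\bR,\bR)$ alone gives $|G(y+u)+G(y-u)-2G(y)|\le C(u^2\wedge 1)$, but under \eqref{Gders} with $\sigma>0$ the function $G$ grows like $|y|^\sigma$ (integrate $|G'|$), so the symmetric second difference grows like $|u|^\sigma\to\infty$ for large $u$ rather than staying bounded. The correct pointwise estimate is $|G(y+u)+G(y-u)-2G(y)|\le C\bigl(u^2\wedge(1+|u|^\sigma)\bigr)$, and to integrate the $|u|^\sigma$ tail against $\nu^\e$ you must invoke the condition $\sigma<\alpha$ together with \eqref{BGnu}, exactly as you already do for the large-jump piece when $|y|\ge 2$. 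As it stands, the step would fail; you cannot appeal to $C^2$-smoothness and \eqref{unif_levy_nu} alone, because \eqref{unif_levy_nu} only controls $u^2\wedge 1$, not $u^2\wedge|u|^\sigma$. The paper handles this case by splitting $K^\e$ at $u=3$ and applying the Newton--Leibniz plus Fubini representation to the tail, which brings in the growth of $G'$ and the tail bound $\nu^\e([v,\infty))\le Cv^{-\alpha}$; a corrected version of your argument would instead reuse your own $|u|^\sigma$ moment estimate.

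Apart from this, your argument is sound, and your large-jump analysis for $|y|\ge 2$ is genuinely simpler than the paper's. You bound the second difference directly through the global polynomial estimate $|G(x)|\le C(1+|x|^\sigma)$ and integrate $1+|y|^\sigma+|u|^\sigma$ against the tail of $\nu^\e$. The paper instead rewrites the second difference via Newton--Leibniz and Fubini and must then split the resulting $v$-integral into four sub-regions $[0,y/2]$, $[y/2,y-1]$, $[y-1,y+1]$, $[y+1,\infty)$, precisely to navigate the window $|y-v|<1$ where the hypothesis $|G'(y-v)|\le C|y-v|^{\sigma-1}$ is unavailable (indeed, extrapolated to the origin it would blow up for $\sigma<1$). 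Your route avoids this because $G$ itself, unlike $G'$, enjoys a clean global growth bound: it is $C^2$, hence bounded near zero, and integrating \eqref{Gders} controls it for $|y|\ge 1$. Both routes deliver the same uniform $|y|^{\sigma-\alpha}$ bound. The small-jump Taylor estimate and the integration-by-parts moment bounds match the paper's argument.
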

\begin{proof} To simplify the notation, we assume $y\geq 0$; clearly,  this does not restrict the generality. For $y\leq 2$, we decompose
\ban
K^\e(y)=\int_0^3 \Big(G(y+u)&+G(y-u)-2G(y)\Big)\,\nu^\e(\di u)\\
&+\int_3^\infty \Big(G(y+u)+G(y-u)-2G(y)\Big)\,\nu^\e(\di u)=:K^\e_1(y)+K^\e_2(y).
\ean
We have
\ban
|K^\e_1(y)|\leq \max_{|v|\leq 5}|G''(v)|\sup_{\e\in(0,1]}\int_{|u|<3}u^2\,\nu^\e(\di u)<\infty,
\ean
see \eqref{unif_levy_nu}. Next, we transform $K^\e_2(y)$ using the Newton--Leibniz formula and the Fubini theorem:
\ban
K^\e_2(y)&= \int_3^\infty \int_0^u\Big(G'(y+v)-G'(y-v)\Big)\,\di v\nu^\e(\di u)\\
&=\int_0^\infty\Big(G'(y+v)-G'(y-v)\Big)\nu^\e([3\vee v, \infty))\,\di v.
\ean
Since $G'$ is locally bounded, by \eqref{BGnu} this gives for some $C>0$
\ban
|K^\e_2(y)|\leq C + C\int_3^\infty\Big|G'(y+v)-G'(y-v)\Big|v^{-\alpha}\,\di v.
\ean
For $y\in [0,2]$ and $v\geq 3$ we have $|y\pm v|\geq 1$, hence we can continue the above estimate:
\ban
|K^\e_2(y)|&\leq C\Big(1+\int_3^\infty \Big(|y+v|^{\sigma-1}+|y-v|^{\sigma-1}\Big) v^{-\alpha}\,\di v\Big)
\\&\leq
C\Big(1+\int_3^\infty \Big(v^{\sigma-1}+(v-2)^{\sigma-1}\Big) v^{-\alpha}\,\di v\Big)<\infty,
\ean
where the integral is finite because $\sigma<\alpha$. That is,
\be
\label{K_small}
\sup_{y\leq 2}\sup_{\e\in(0,1]} |K^\e(y)|<\infty.
\ee

For $y>2$, we use another decomposition:
\ban
K^\e(y)=\int_0^{y/2} \Big(G(y+u)&+G(y-u)-2G(y)\Big)\,\nu^\e(\di u)\\
&+\int_{y/2}^\infty \Big(G(y+u)+G(y-u)-2G(y)\Big)\,\nu^\e(\di u)=:K^\e_3(y)+K^\e_4(y).
\ean
We have $\sigma<\alpha\leq 2$. Hence, for $u\leq y/2$,
\ban
\Big|G(y+u)+G(y-u)-2G(y)\Big|\leq u^2\sup_{v>y/2}|G''(v)|\leq Cu^2 y^{\sigma-2},
\ean
and
\ban
|K^\e_3(y)|\leq Cy^{\sigma-2}\int_0^{y/2}u^2\nu^\eps(\di u).
\ean
We have by \eqref{BGnu} and \eqref{unif_levy_nu}
\ban
\int_0^{r}u^2\nu^\eps(\di u)&=\int_0^{1}u^2\nu^\eps(\di u)+\int_1^r u^2\nu^\eps(\di u)
\leq Cr^{2-\alpha}+C\int_1^ru^{1-\alpha}\, \di u.
\ean
That is, we have for $y>2$
\ban
|K^\e_3(y)|\leq Cy^{\sigma-\alpha}
\ean
if $\alpha\in (0,2)$, and
\ban
|K^\e_3(y)|\leq Cy^{\sigma-2}\ln(2+y)
\ean
if $\alpha=2$.

For $K_4^\eps(y)$, we again use the Fubini theorem:
\ban
K_4^\eps(y)&=\int_{y/2}^\infty \int_0^u\Big(G'(y+v)-G'(y-v)\Big)\,\di v\,\nu^\e(\di u)\\
&=\int_0^\infty\Big(G'(y+v)-G'(y-v)\Big)\nu^\e([(y/2)\vee v, \infty))\,\di v\\
&=\Big[\int_0^{y/2}+\int_{y/2}^{y-1}+\int_{y-1}^{y+1}+\int_{y+1}^\infty\Big]\Big(G'(y+v)-G'(y-v)\Big)\nu^\e([(y/2)\vee v, \infty))\,\di v\\
&=:\sum_{j=1}^4K_{4,j}^\eps(y).
\ean
Since $y>2$ we have $y+v>1$ for any $v>0$, and thus
\ban
|G'(y+v)|\leq C(y+v)^{\sigma-1}.
\ean
In addition, for $v\in [0, y-1]$  we have
\ban
|G'(y+v)|\leq C(y-v)^{\sigma-1}.
\ean
Thus by \eqref{BGnu}
\ban
|K_{4,1}^\eps(y)|&\leq Cy^{-\alpha}\int_0^{y/2}\Big((y+v)^{\sigma-1}+(y-v)^{\sigma-1}\Big)\,\di v\leq Cy^{\sigma-\alpha},\\
|K_{4,2}^\eps(y)|&\leq C\int_{y/2}^{y-1}\Big((y+v)^{\sigma-1}+(y-v)^{\sigma-1}\Big)v^{-\alpha}\,\di v\\
& \leq Cy^{\sigma-\alpha}\int_{1/2}^1\Big((1+\rho)^{\sigma-1}+(1-\rho)^{\sigma-1}\Big)\rho^{-\alpha}\, \di \rho,
\ean
note that the latter integral is finite because $\sigma>0$.  Similarly,
\ban
|K_{4,4}^\eps(y)|&\leq C\int_{y+1}^\infty\Big((y+v)^{\sigma-1}+(v-y)^{\sigma-1}\Big)v^{-\alpha}\,\di v\\
&\leq Cy^{\sigma-\alpha}\int_1^\infty\Big((1+\rho)^{\sigma-1}+(\rho-1)^{\sigma-1}\Big)\rho^{-\alpha}\, \di \rho,
\ean
and the latter integral is finite because $\sigma>0$ and $\sigma<\alpha$. Finally, since $G'$ is locally bounded,
\ban
|K_{4,3}^\eps(y)|\leq  C\int_{y-1}^{y+1}\Big((y+v)^{\sigma-1}+1\Big)v^{-\alpha}\,\di v\leq C\Big(y^{\sigma-\alpha-1}+y^{-\alpha}\Big).
\ean
Combining the estimates for $K_3^\eps$ and for $K_{4,j}^\eps$, $j=1, \dots, 4$, we complete the proof.
\end{proof}

\begin{lem}\label{lR} The functions $R^\eps$, $\e\in(0,1]$ satisfy
\ban
|R^\eps(y)|\leq C(1+|y|)^{2-\alpha-\beta}
\ean
if $\alpha\in (0,2)$, and
\ban
|R^\eps(y)|\leq C(1+|y|)^{2-\alpha-\beta}\ln (2+|y|)
\ean
if $\alpha=2$.
\end{lem}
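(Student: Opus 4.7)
The plan is to reduce the estimation of $R^\eps$ to an application of Lemma \ref{l:K} with $G = \widehat F$ and exponent $\sigma = 2-\beta$. Since $\bar F$ is supported in $[-1,1]$, $\widehat F$ coincides with $F$ for $|y|\geq 1$, so
\[
\widehat F'(y) = |y|^{1-\beta}\sgn y, \qquad \widehat F''(y) = (1-\beta)|y|^{-\beta} \qquad (|y|\geq 1),
\]
which verifies the growth condition \eqref{Gders} with $\sigma = 2-\beta$. The principal assumption $\alpha + 2\beta < 4$ combined with $\alpha \geq 0$ yields $\beta < 2$, hence $\sigma > 0$; the non-regular hypothesis enforces $\alpha + \beta \geq 2$, hence $\sigma \leq \alpha$.

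When $\alpha + \beta > 2$ strictly, so that $\sigma \in (0,\alpha)$, Lemma \ref{l:K} gives immediately
\[
\sup_{\eps \in (0,1]} |J^\eps(y)| \leq C(1+|y|)^{2-\alpha-\beta},
\]
with an extra factor $\ln(2+|y|)$ when $\alpha = 2$. The remaining summand $-\bar F'(y)|y|^\beta \sgn y$ in the expression \eqref{e:R} for $R^\eps$ vanishes outside $[-1,1]$ and is bounded there, since $\bar F'(y)|y|^\beta\sgn y = \widehat F'(y)|y|^\beta\sgn y - y$ and $|\widehat F'(y)| \leq C|y|$ near $0$ by $\widehat F \in C^3$ with $\widehat F'(0) = 0$. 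Being bounded and compactly supported, this correction is absorbed into the bound on $J^\eps$, since $(1+|y|)^{2-\alpha-\beta}$ stays bounded below on $[-1,1]$.

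The main obstacle is the borderline case $\alpha + \beta = 2$, where $\sigma = \alpha$ lies exactly on the boundary of the range allowed by Lemma \ref{l:K}; under non-regularity this forces $\alpha < 2$, so the statement of Lemma \ref{lR} requires a uniform bound without logarithmic factor. In this case the proof of Lemma \ref{l:K} fails in the tail piece $K_{4,4}^\eps$, where the integral $\int((1+\rho)^{\sigma-1}+(\rho-1)^{\sigma-1})\rho^{-\alpha}\,\di\rho$ diverges logarithmically at infinity. To repair this, I would exploit that $\widehat F'$ is even (since $\widehat F$ is odd) and apply a first-order Taylor expansion: for $v \geq 2y$,
\[
|\widehat F'(y+v) - \widehat F'(y-v)| = |\widehat F'(v+y) - \widehat F'(v-y)| \leq C y\cdot v^{\sigma-2},
\]
after which $\int_{2y}^\infty y\, v^{\sigma-2-\alpha}\,\di v$ converges already under $\sigma < \alpha + 1$ and is of order $y^{\sigma-\alpha} = 1$ at $\sigma = \alpha$. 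The intermediate range $v \in [y+1, 2y]$ is handled by integrating the crude bound $v^{\sigma-1-\alpha} = v^{-1}$ over an interval of length $y-1$, yielding $O(1)$. Combining these refinements with the already-controlled pieces $K_{4,1}^\eps, K_{4,2}^\eps, K_{4,3}^\eps$ and $K_3^\eps$ from Lemma \ref{l:K}'s proof, one obtains $|J^\eps(y)| \leq C$ uniformly in the borderline case, completing the proof.
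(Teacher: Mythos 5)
Your proposal follows the paper's strategy closely: reduce $R^\eps$ to $J^\eps$ (the Lemma~\ref{l:K} form with $G=\widehat F$, $\sigma=2-\beta$) plus the compactly supported, bounded correction $-\bar F'(y)|y|^\beta\sgn y$; observe that $\sigma\in(0,\alpha)$ when $\alpha+\beta>2$ so that Lemma~\ref{l:K} applies directly; and in the boundary case $\alpha+\beta=2$ (where $\sigma=\alpha$) exploit that $\widehat F$ is odd, hence $\widehat F'$ is even, so the integrand involves a \emph{difference} $\widehat F'(v+y)-\widehat F'(v-y)$ that decays like $v^{\sigma-2}$ rather than $v^{\sigma-1}$ at infinity. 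This is precisely the mechanism the paper uses to repair the boundary case.

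However, there is a genuine gap: you identify $K^\e_{4,4}$ as the \emph{only} piece of the proof of Lemma~\ref{l:K} that breaks at $\sigma=\alpha$, and you close the argument by ``combining these refinements with the already-controlled pieces $K^\e_{4,1},K^\e_{4,2},K^\e_{4,3}$ and $K^\e_3$.'' All of those are pieces of the decomposition for $y>2$. You have silently dropped the range $|y|\leq 2$, and in that range the decomposition $K^\e=K^\e_1+K^\e_2$ is used, with
\ban
|K^\e_2(y)|\leq C\Big(1+\int_3^\infty \big(v^{\sigma-1}+(v-2)^{\sigma-1}\big)\,v^{-\alpha}\,\di v\Big),
\ean
which converges only for $\sigma<\alpha$ and diverges logarithmically at $\sigma=\alpha$ exactly as $K^\e_{4,4}$ does. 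So $K^\e_2$ also fails at the boundary and must be repaired by the same evenness/difference device: writing $G'(y+v)-G'(y-v)=(y+v)^{\sigma-1}-(v-y)^{\sigma-1}$ and using $\big|v^{\sigma-1}-(v-2)^{\sigma-1}\big|\sim c\,v^{\sigma-2}$ as $v\to\infty$, the integral becomes convergent for $\sigma<\alpha+1$. The paper's proof carries out this repair for both $K^\e_2$ and $K^\e_{4,4}$. Without it, your argument does not give the required uniform bound $\sup_{|y|\leq 2}\sup_{\e}|R^\eps(y)|<\infty$ (continuity of each $R^\eps$ on $[-2,2]$ does not yield uniformity in $\eps$), so the claim ``$|J^\eps(y)|\leq C$ uniformly in the borderline case'' is not actually established for small $|y|$. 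Everything else — including the far-tail Taylor estimate $|\widehat F'(v+y)-\widehat F'(v-y)|\leq Cy\,v^{\sigma-2}$ for $v\geq 2y$ and the treatment of the intermediate range — is sound.

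As a minor remark, in the intermediate range $v\in[y+1,2y]$ you should also track the $(v-y)^{\sigma-1}v^{-\alpha}$ contribution, which gives $\int_{y+1}^{2y}(v-y)^{\sigma-1}v^{-\alpha}\,\di v\leq Cy^{-\alpha}\int_1^{y}w^{\sigma-1}\,\di w=O(y^{\sigma-\alpha})=O(1)$; your ``crude bound $v^{\sigma-1-\alpha}=v^{-1}$'' only addresses the $(y+v)^{\sigma-1}$ part.
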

\begin{proof} The family $R^\eps$, $\e\in(0,1]$ has the form \eqref{K} with $G=\wh F$, and this function satisfies \eqref{Gders}
with $\sigma=2-\beta>0$. Hence, for $\alpha+\beta>2$, the required statement follows directly from Lemma \ref{l:K}.
Let us prove this statement in the boundary case $\alpha+\beta=2$. One can see that the estimates for $K_1^\eps$, $K_{3}^\eps$
and $K^\eps_{4,j}$, $j=1,2,3$, from the previous proof remain true under the assumption $\sigma=0$ as well. Next, for $G=\wh F$ we have
\ban
G'(y)=\begin{cases}
         y^{1-\beta}, & y\geq 1, \\
          (-y)^{1-\beta}, & y\leq -1.
        \end{cases}
\ean
Then for $y\leq 2$
\ban
K^\e_2(y)= \int_0^\infty\Big((y+v)^{1-\beta}-(v-y)^{1-\beta}\Big)\nu^\e([3\vee v, \infty))\,\di v,
\ean
and therefore
\ban
|K^\e_2(y)|\leq C\Big(1+\int_3^\infty \Big|v^{\sigma-1}-(v-2)^{\sigma-1}\Big| v^{-\alpha}\,\di v\Big).
\ean
The latter integral is finite for $\sigma>\alpha-1$ because
\ban
\Big|v^{\sigma-1}-(v-2)^{\sigma-1}\Big|\sim c v^{\sigma-2}, \quad v\to \infty.
\ean
Similarly,
\ban
|K_{4,4}^\eps(y)|\leq  Cy^{\sigma-\alpha}\int_1^\infty\Big|(1+\rho)^{\sigma-1}-(\rho-1)^{\sigma-1}\Big|\rho^{-\alpha}\, \di \rho,
\ean
and the latter integral is finite for $\sigma>\alpha-1$.
\end{proof}

\begin{lem}
\label{lRder}
The derivatives of  functions $R^\eps$, $\e\in(0,1]$, are uniformly bounded, namely there exists $C>0$ such that
\ban
\Big|\frac{\di}{\di y}R^\eps(y)\Big|\leq C, \quad y\in \bR, \quad \e\in(0,1].
\ean
\end{lem}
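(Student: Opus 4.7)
The plan is to differentiate $R^\eps(y) = -\wh F'(y)|y|^\beta\sgn y + y + J^\eps(y)$ and bound each piece uniformly in $\eps>0$ and $y\in\bR$. For $y\neq 0$ this yields
\[
R^\eps{}'(y) = -\wh F''(y)|y|^\beta\sgn y - \beta\wh F'(y)|y|^{\beta-1} + 1 + (J^\eps)'(y),
\]
where differentiation of $J^\eps$ under the integral is legitimate by $\wh F\in C^3$ together with the bound $|\wh F'(y+u)+\wh F'(y-u)-2\wh F'(y)|\leq C(u^2\wedge 1)$, which is uniformly integrable against $\nu^\eps$ by \eqref{unif_levy_nu} and \eqref{BGnu}; explicitly,
\[
(J^\eps)'(y) = \int_0^\infty \bigl(\wh F'(y+u) + \wh F'(y-u) - 2\wh F'(y)\bigr)\nu^\eps(\di u).
\]

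For $|y|\geq 1$ we have $\wh F = F$, and explicit computation gives $\wh F''(y)|y|^\beta\sgn y = 1-\beta$ and $\beta\wh F'(y)|y|^{\beta-1}=\beta$, so the three non-integral terms cancel identically to yield $-(1-\beta)-\beta+1=0$. For $|y|\leq 1$ we use the $C^3$-regularity of $\wh F$ together with the choice of the mollifier $\bar F$ (always possible: one additional linear condition compatible with the $C^3$-matching at the endpoints) ensuring $\wh F'(0)=0$; combined with $\wh F''(0)=0$ (which holds automatically because $\wh F$ is odd and hence $\wh F'$ even), this gives the Taylor expansion $\wh F'(y)=O(y^2)$ at the origin. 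Consequently $\wh F'(y)|y|^{\beta-1}=O(|y|^{\beta+1})$ is bounded on $[-1,1]$ for every $\beta>-1$, while $\wh F''(y)|y|^\beta\sgn y$ is bounded there by continuity of $\wh F''$ and boundedness of $|y|^\beta$. When $\beta\geq 1$ the normalisation $\wh F'(0)=0$ is not even needed, as $|y|^{\beta-1}$ is itself bounded on $[-1,1]$.

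The integral term $(J^\eps)'(y)$ is controlled by Lemma~\ref{l:K} applied with $G=\wh F'$ and $\sigma=1-\beta$: the growth conditions \eqref{Gders} follow from $\wh F'(y)=|y|^{1-\beta}$, $\wh F''(y)=(1-\beta)|y|^{-\beta}\sgn y$, $\wh F'''(y)=-\beta(1-\beta)|y|^{-\beta-1}\sgn y$ on $\{|y|\geq 1\}$. When $\beta\in(0,1)$, we have $\sigma=1-\beta\in(0,1)\subset(0,\alpha)$ because the non-regular hypothesis $\alpha+\beta\geq 2$ forces $\alpha\geq 2-\beta>1-\beta=\sigma$, so Lemma~\ref{l:K} yields $\sup_\eps|(J^\eps)'(y)|\leq C(1+|y|)^{\sigma-\alpha}$, which is uniformly bounded. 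When $\beta\in[1,2)$, $\wh F'$ is globally bounded, and a direct splitting of the integral at $u=1$ (Taylor bound $u^2\|\wh F'''\|_\infty$ on the small-$u$ part, $4\|\wh F'\|_\infty$ on the large-$u$ part), combined with \eqref{unif_levy_nu} and \eqref{BGnu}, yields the same conclusion. The only delicate point is the normalisation $\wh F'(0)=0$, introduced to remove the potential $|y|^{\beta-1}$-singularity of $\beta\wh F'(y)|y|^{\beta-1}$ at the origin for $\beta<1$; this is the natural counterpart, at $y=0$, of the exact cancellation that already takes place for $|y|\geq 1$.
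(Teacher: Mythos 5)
Your proof is correct and is actually \emph{more complete} than the paper's. The paper's proof writes
\[
\frac{\di}{\di y}R^\eps(y)=\int_0^\infty \Big(\wh F'(y+u)+\wh F'(y-u)-2\wh F'(y)\Big)\,\nu^\e(\di u),
\]
that is, it identifies $\frac{\di}{\di y}R^\eps$ with $(J^\eps)'$ and never mentions the contribution from the non-integral piece $-\bar F'(y)|y|^\beta\sgn y$. As you verified, that identity holds for $|y|\geq 1$ (there $\wh F=F$ and the three non-integral terms in $R^\eps{}'$ cancel exactly), but on $(-1,1)$ the extra derivative $-\bar F''(y)|y|^\beta\sgn y-\beta\bar F'(y)|y|^{\beta-1}$ is present, and for $\beta\in(0,1)$ the factor $|y|^{\beta-1}$ is unbounded near $0$ unless $\wh F'(0)=\bar F'(0)=0$. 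You identified this and resolved it by imposing the additional normalisation $\wh F'(0)=0$ on the mollifier — one more linear condition compatible with the $C^3$-matching at $\pm 1$ and with oddness — and then used $\wh F''(0)=0$ (automatic from oddness) to get $\wh F'(y)=O(y^2)$, which kills the $|y|^{\beta-1}$ singularity. The paper does not state this normalisation, so strictly speaking its proof has a gap which your argument fills.

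For the integral term your treatment tracks the paper's closely. For $\beta\geq 1$ both of you split at $u=1$ with a Taylor bound $\|\wh F'''\|_\infty u^2$ on the small-$u$ part and boundedness of $\wh F'$ on the large-$u$ part, giving $C\int(u^2\wedge 1)\nu^\eps(\di u)$ and then \eqref{unif_levy_nu}. For $\beta\in(0,1)$ you invoke Lemma~\ref{l:K} with $G=\wh F'$ and $\sigma=1-\beta\in(0,\alpha)$ (the inclusion holds since $\alpha\geq 2-\beta>1-\beta$), while the paper instead uses global $(1-\beta)$-H\"older continuity of $\wh F'$ to get $|(J^\eps)'(y)|\leq C\int(u^2\wedge|u|^{1-\beta})\nu^\eps(\di u)$ and then estimates the large-$u$ integral directly using \eqref{BGnu} and $\alpha+\beta\geq 2$. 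Both routes give the required uniform bound; yours reuses machinery already proved and even yields decay $C(1+|y|)^{1-\beta-\alpha}$, whereas the paper's is self-contained but requires one to verify the global H\"older property of $\wh F'$.
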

\begin{proof} We have
\ban
\frac{\di}{\di y}R^\eps(y)=\int_0^\infty \Big(\wh F'(y+u)+\wh F'(y-u)-2\wh F'(y)\Big)\,\nu^\e(\di u),
\ean
and the integral is well defined because $\wh F'\in C^2(\bR,\bR)$.
We have that the second derivative $(\wh F')''=\wh F'''$ of $\wh F'$ is bounded, and
$\wh F'$ is either bounded for $\beta\geq 1$, or $(1-\beta)$-H\"older continuous for $\beta\in (0,1)$. In the first case, we just have
\ban
\sup_{y\in \bR,\e\in(0,1]}\Big|\frac{\di}{\di y}R^\eps(y)\Big|\leq C\sup_{\e\in(0,1]}\int_{\bR}\big(u^2\wedge 1)\,\nu^\eps(\di u)<\infty,
\ean
see \eqref{unif_levy_nu}. In the second case we have
\ban
\sup_{y\in \bR,\e\in(0,1]}\Big|\frac{\di}{\di y}R^\eps(y)\Big|\leq C\sup_{\e\in(0,1]}\int_{\bR}\big(u^2\wedge |u|^{1-\beta})\nu^\eps(\di u).
\ean
By  \eqref{unif_levy_nu},
\ban
\int_{|u|>1}|u|^{1-\beta}\nu^\eps(\di u)&=2\nu^\eps\big([1, \infty)\big)+2(1-\beta)\int_1^\infty \int_1^u v^{-\beta}\, \di v\, \nu^\eps(\di u)
\\&\leq C+C\int_1^\infty v^{-\alpha-\beta}\di v<\infty,
\ean
where we have used \eqref{BGnu} and the assumption $\alpha+\beta\geq 2>1$. This provides the required statement for $\beta\in (0,1)$.
\end{proof}

\section{Proof of \eqref{H-H}}\label{sAH}

First, we observe that
\ban
\e^{(2-\alpha-\beta)\gamma}&\int_0^\infty (F(\e^{-\gamma} v+u)+ F(\e^{-\gamma} v-u)- 2 F(\e^{-\gamma}v) )\, \nu^\e(\di u)
\\&=\e^{(2-\beta)\gamma}\int_0^\infty (F(\e^{-\gamma} v+\e^{-\gamma}z)+ F(\e^{-\gamma} v-\e^{-\gamma}z)- 2 F(\e^{-\gamma}v) )\, \mu^\e(\di z)
\\&=\int_0^\infty (F( v+z)+ F( v-z)- 2 F(y) )\, \mu^\e(\di z)=H^\e(v).
\ean
Hence
\ban
\wh H^\eps(v)-H^\eps(v)=\e^{(2-\alpha-\beta)\gamma}R^{0,\e}(\e^{-\gamma} v)+\e^{(2-\alpha-\beta)\gamma}R^{1,\e}(\e^{-\gamma} v),
\ean
where
\ban
R^{0,\e}(y)=-\bar F'(y)|y|^\beta\sgn y, \quad R^{1,\e}(y)=\int_0^\infty (\bar F(y+u)+ \bar F(y-u)- 2\bar F(y) )\, \nu^\e(\di u).
\ean
Since $\bar F$ vanishes outside of $[-1,1]$, so does $R^{0,\e}(y)$, and for $y>1$ we have
\ban
|R^{1,\e}(y)|=\Big|\int_{[y-1,y+1]} F(y-u)\, \nu^\e(\di u)\Big|\leq C\nu^\e\big([y-1, \infty)\big)\leq C(y-1)^{-\alpha};
\ean
here we have used that $\bar F$ is bounded and \eqref{BGnu}. Hence
\ban
\sup_{|v|>2\e^\gamma}|\wh H^\eps(v)-H^\eps(v)|\leq C\e^{(2-\beta)\gamma}\to 0, \quad \e\to 0,
\ean
which yields \eqref{H-H}.

%
%

\end{document}